\numberwithin{equation}{section}
\newcommand{\beq}{\begin{equation}}
\newcommand{\eeq}{\end{equation}}
\newcommand{\beqs}{\begin{eqnarray*}}
\newcommand{\eeqs}{\end{eqnarray*}}
\newcommand{\beqn}{\begin{eqnarray}}
\newcommand{\eeqn}{\end{eqnarray}}
\newcommand{\beqa}{\begin{array}}
\newcommand{\eeqa}{\end{array}}
\def\lra{\longrightarrow}
\def\p{\prime}
\def\bc{\begin{center}}
\def\ec{\end{center}}
\def\p{\partial}
\def\begeq{\begin{equation}}
\def\endeq{\end{equation}}
\def\and{\quad{\rm and}\quad}
\let\lra=\longrightarrow
\def\mapright\#1{\,\smash{\mathop{\lra}\limits^{\#1}}\,}
\newtheorem{prop}{Proposition}[section]
\newtheorem{theo}[prop]{Theorem}
\newtheorem{lem}[prop]{Lemma}
\newtheorem{cor}[prop]{Corollary}
\newtheorem{rem}[prop]{Remark}
\newtheorem{exa}[prop]{Example}
\newtheorem{defi}[prop]{Definition}
\begin{document}

\title{K-stability and polystable degenerations of polarized spherical  varieties}
\author{Yan Li$^{*}$ and Bin Zhou$^{\dag}$}

\address{$^{*}$School of Mathematics and Statistics, Beijing Institute of Technology, Beijing, 100081, China.}
\address{$^{\dag}$School of Mathematical Sciences, Peking
University, Beijing 100871, China.}
\email{liyan.kitai@yandex.ru\,,\ bzhou@pku.edu.cn}

\thanks {$^*$Partially supported by NSFC Grant 12101043 and Beijing Institute of Technology Research Fund Program for Young Scholars, No. 3170012222012.}
\thanks {$^{\dag}$ Partially supported by National Key R$\&$D Program of China SQ2020YFA0712800 and NSFC  Grant 11822101.}
\subjclass[2020]{Primary: 14M27; Secondary: 32Q26, 53C55.}

\keywords{K\"ahler manifolds, K-stability, spherical variety.}

\begin{abstract}
In this paper, we study the K-stability of polarized spherical varieties. After reduction, it can be treated as a variational problem of the reduced functional of the Futaki invariant on the associated moment polytope. With the convexity constraint of the problem, the minimizers are shown to satisfy the homogeneous Monge-Amp\`ere equation (HMA). When the spherical variety has rank two, a simpler characterization can be established through properties of the HMA.
As an application, we determine the strict semistability and polystable degenerations for Fano spherical varieties of rank two.
\end{abstract}

\maketitle


\section{Introduction}

In K\"ahler geometry, a fundamental problem is to find algebro-geometric conditions for the existence of special metrics on K\"ahler manifolds, including K\"ahler-Einstein metrics, constant scalar curvature metrics, extremal metrics, K\"ahler-Ricci solitons, etc.
One of the most widely studied conditions is the K-stability, which concerns on the positivity of the Futaki invariant for test configurations (one parameter degenerations with an $\mathbb C^*$-action) of the manifold \cite{Ti, Do}.
Generally, testing the K-stability is an infinite dimensional problem and is very difficult.
In a series of papers \cite{Do, D2, D3}, Donaldson established a beautiful reduction of the K-stability of toric manifolds, and proved the existence of constant scalar curvature metrics on K-stable polarized toric
surfaces. Later in an expository paper \cite{Do08}, some of the intuitions are extended to \emph{multiplicity-free} manifolds, or \emph{spherical varieties} in notation of algebraic geometry.


The class of spherical varieties is a huge class of varieties with large symmetry. It contains many important classes such as generalized flag manifolds, toric varieties, homogenous toric bundles, group compactifications, etc. The K\"ahler geometry on these classes has been explored extensively (see \cite{AK, Del1, Del4, PS10, Ra, Timashev-Sbo}). In recent years, many highly non-trivial examples with interesting properties have been found in this class, including a class of K-unstable Fano spherical manifolds \cite{Del3} and the first examples of Type-II solution of the K\"ahler-Ricci flow on Fano manifolds \cite{LTZ}.
Such examples do not exist in toric varieties.  On one hand, by the famous Luna-Vust theory \cite{Luna-Vust, Timashev-book}, a spherical variety admits a nice combinatory data which encodes its geometric information. One can derive certain combinatory criterions for geometric problems. On the other hand, the class of spherical varieties has a nice ``closed" property:  an invariant divisor is also a spherical variety. In addition, the equivariant degeneration of a spherical variety equipped with a complex torus $(\mathbb C^*)^r$-action (that is, an $\mathbb R$-test configuration), as well as its limit (if normal), are also spherical varieties. This was studied in \cite{Popov-1986,Del3} when $r=1$ (that is, a ($\mathbb Z$-)test configuration). We remark that \cite{Popov-1986} studied general equivariant filtrations of spherical varieties. Later in \cite{LL-arXiv-2021}, Li-Li studied the general equivariant $\mathbb R$-test configurations and found the limit of K\"ahler-Ricci flow on group compactifications. In this paper, we investigate the K-stability of polarized spherical varieties.

To state our results, we first introduce some notations. Let $G$ be a complex, connected reductive Lie group and $B$ a fixed Borel subgroup in it. A homogenous space $G/H$ for some subgroup $H\subset G$ is called \emph{spherical} if it contains an open and dense $B$-orbit. By \cite{Luna-Vust, Timashev-book}, $G/H$ determines a valuation cone $\mathcal V$ which is a fundamental chamber of the associated little group $\mathcal W$. In fact, $\mathcal W$ is the Weyl group of the spherical root system $\Phi$. Following \cite{Timashev-book}, we fix a system of positive roots $\Phi_+\in\Phi$ so that $\mathcal V$ is the anti-dominate Weyl chamber. A $G$-equivariant, polarized normal compactification $(M,L)$ of $G/H$ consists of a variety $M$ admitting $G$-action with an open dense orbit isomorphic to $G/H$ and a $G$-linearized ample line bundle $L$. For any such $(M,L)$, there is a moment polytope $P_+$ that encodes the structure of representations of $G$ on the space of holomorphic sections of tensor powers of $L$. We refer  the readers to Section 2.1 below for more details.

In \cite{Del-202009}, Delcroix obtained an expression of the K-stability  of  polarized spherical varieties in terms of the combinatorial data, which generalizes the case of toric varieties by Donaldson. As can be seen in Section 2.3 below, we can generalize this reduction to the relative Futaki invariant. We will reduce it to a linear functional $\mathcal L_X(\cdot)$ (see \eqref{L(u)} below) on a class $\mathcal C_{1,\mathcal W}$ of certain convex functions, which corresponds to the modula of equivariant test configurations of $(M,L)$. Hence, the relative K-stability can be described by the sign of $\mathcal L_X(\cdot)$. Then we have:

\begin{theo}\label{Main-thm1}
Let $G/H$ be a spherical homogenous space and $\mathcal W$ the corresponding little Weyl group. Let $(M,L)$ be a $G$-equivariant, polarized normal compactification of $G/H$ and $P_+$ be its associated polytope. Assume that $P_+=P\cap \mathcal V_+$ for a convex $\mathcal W$-invariant polytope $P$. Suppose that $\mathcal L_X(\cdot)\geq0$ for any function in $\mathcal C_{1,\mathcal W}$ and there is a non-central affine function $u_0\in\mathcal C_{1,\mathcal W}$ so that $\mathcal L_X(u_0)=0$.
Then:
\begin{enumerate}
\item $u_0$ is a  generalized solution of the homogenous Monge-Amp\`ere equation on $P$
in the sense of Alexandrov;
\item  If $rank(G/H)=2$, there exists a $\mathcal W$-invariant simple piecewise linear function $\bar u\in\mathcal C_{1,\mathcal W}$, which is not a central affine function, so that $\mathcal L_X(\bar u)=0$.
\end{enumerate}
\end{theo}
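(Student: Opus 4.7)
The plan is to treat part (1) as extracting a PDE from the first-order optimality of a minimizer of the linear functional $\mathcal{L}_X$ on the convex cone $\mathcal{C}_{1,\mathcal{W}}$, and part (2) as a structural decomposition of a 2-dimensional Alexandrov HMA solution. For (1), I would argue by contradiction: suppose $u_0$ is non-central-affine with $\mathcal{L}_X(u_0)=0$ but the Alexandrov Monge-Amp\`ere measure $MA(u_0)$ charges some open subset $U\subset P^\circ$ with positive density. Replacing $U$ by its $\mathcal{W}$-orbit, I can pick a smooth $\mathcal{W}$-invariant bump $\psi$ compactly supported in $\mathcal{W}\cdot U$; Alexandrov strict convexity of $u_0$ on $U$ ensures $u_0+t\psi$ remains convex for all sufficiently small $|t|$, hence lies in $\mathcal{C}_{1,\mathcal{W}}$ after the normalization baked into that class. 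The minimality inequality $\mathcal{L}_X(u_0+t\psi)\geq 0 =\mathcal{L}_X(u_0)$ applied on both signs of $t$, together with linearity of $\mathcal{L}_X$, forces $\mathcal{L}_X(\psi)=0$ for every such $\psi$. Since by the reduction in Section~2.3 the functional $\mathcal{L}_X$ is represented by a combination of a boundary measure on $\partial P_+$ and a strictly positive interior density on $P_+^\circ$, this vanishing on arbitrary smooth interior perturbations contradicts the positivity of that interior density, establishing $MA(u_0)\equiv 0$.

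For (2), I would exploit the fact that in rank $2$ the polytope $P$ is $2$-dimensional, so the Alexandrov HMA condition forces the graph of $u_0$ to be ruled: outside a negligible set, $u_0$ is affine along maximal line segments foliating $P$, and $u_0$ can be written as $u_{\mathrm{aff}}+\int_{\Lambda}v_\lambda\,d\mu(\lambda)$, where $u_{\mathrm{aff}}$ is affine, $\mu$ is a positive measure on a $1$-dimensional parameter space $\Lambda$, and each $v_\lambda$ is a simple piecewise linear convex function of the form $\max(\ell_\lambda,0)$ with a single crease line. The $\mathcal{W}$-symmetry of $u_0$ lets me choose the family $\{v_\lambda\}$ to be $\mathcal{W}$-equivariant, and the $\mathcal{W}$-average of any single $v_\lambda$ produces a $\mathcal{W}$-invariant simple piecewise linear function $\widetilde{v}_\lambda\in\mathcal{C}_{1,\mathcal{W}}$ that is not central affine. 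By linearity, $0=\mathcal{L}_X(u_0)=\int_\Lambda \mathcal{L}_X(\widetilde{v}_\lambda)\,d\mu(\lambda)$, and since each integrand is non-negative by hypothesis, $\mathcal{L}_X(\widetilde{v}_{\lambda_0})=0$ for $\mu$-almost every $\lambda_0$; any such $\bar{u}:=\widetilde{v}_{\lambda_0}$ fulfills the claim.

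The main obstacle is carrying out the ridge decomposition in (2) with $\mathcal{W}$-compatibility: one must produce the disintegration $u_0=u_{\mathrm{aff}}+\int v_\lambda\,d\mu$ concretely for an Alexandrov HMA solution and verify that the $\mathcal{W}$-symmetrization of a single $v_\lambda$ still falls in the paper's intended notion of a \emph{simple} piecewise linear function, rather than a more complex polyhedral convex function. The rank-$2$ hypothesis is essential here, because $\mathcal{W}$ is then a finite reflection group acting on $\mathbb{R}^2$ (trivial, $\mathbb{Z}/2$, or dihedral), which keeps the combinatorics of symmetrization manageable and lets the single-crease structure of $v_\lambda$ be preserved in $\widetilde{v}_\lambda$ up to the additional $\mathcal{W}$-related crease lines dictated by the symmetry group. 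A secondary delicate point is that the perturbation argument in (1) must respect whatever affine-normalization is used to define $\mathcal{C}_{1,\mathcal{W}}$; adding a constant or central-affine correction to $u_0+t\psi$ if needed will not affect $\mathcal{L}_X$ provided, as expected in the spherical reduction, $\mathcal{L}_X$ vanishes on central-affine functions.
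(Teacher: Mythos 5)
Your proposal for part (1) has a genuine gap in two places. First, the contradiction you aim for rests on the claim that $\mathcal L_X$ has ``a strictly positive interior density on $P_+^\circ$''; this is false. The interior density is $-\bigl(\overline S+\theta_X\bigr)\pi+\langle 2\rho,\nabla\pi\rangle=-\Theta\pi$, and $\Theta$ changes sign on $P_+$ (it tends to $-\infty$ near the Weyl walls, while $\int_P\Theta\pi\,dy>0$ forces $\{\Theta\ge 0\}\neq\emptyset$). This sign change is not a technicality: it is exactly the structure the paper's argument exploits, replacing $u_0$ on $\Omega_-=\{\mu<0\}$ by the convex envelope of its values on $\Omega_+\cup\partial\Omega$ (which can only \emph{increase} $u$ where the density is negative) and dually on $\Omega_+$, so that only one-sided comparisons and monotonicity are ever needed. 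Second, your two-sided perturbation $u_0\pm t\psi$ is not admissible just because $\mathrm{MA}[u_0]$ charges an open set $U$ with positive density: an Alexandrov solution with positive Monge--Amp\`ere density need not satisfy any distributional bound $D^2u_0\ge cI$ on any open subset (e.g.\ a separable function $f(x)+g(y)$ with $f''>0$ a.e.\ but $\operatorname*{ess\,inf}f''=0$ on every interval), so $u_0-t\psi$ can fail to be convex for every $t>0$. The paper's route through Proposition \ref{HMA-prop} and Lemma \ref{hma1} avoids both issues entirely.

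For part (2), the load-bearing step is the asserted ``ridge decomposition'' $u_0=u_{\mathrm{aff}}+\int_\Lambda v_\lambda\,d\mu(\lambda)$ of a two-dimensional Alexandrov HMA solution into single-crease functions $\max\{\ell_\lambda,0\}$, done $\mathcal W$-equivariantly and with each symmetrized piece landing in $\mathcal C_{1,\mathcal W}$ as a \emph{simple} piecewise linear function. This is an unproved structure theorem, and it is where all the difficulty lives: a general ruled HMA graph is a supremum (not an integral) of a one-parameter family of affine functions, its creases form a lamination rather than a family of parallel or concurrent lines, the individual $v_\lambda$ need not be $\mathcal W$-dominate (so the hypothesis $\mathcal L_X\ge 0$ on $\mathcal C_{1,\mathcal W}$ does not apply to them), and the $\mathcal W$-average of a single-crease function is in general a sum of several creases, hence no longer simple piecewise linear in the paper's sense. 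The paper does something quite different: normalizing by a central affine support at a point of the Weyl wall, it studies the contact set $\mathcal T=\{u_0=0\}$, uses Lemma \ref{extreme} (extreme points of contact sets of an HMA solution lie on $\partial P$) to locate a single edge $E$ of $\mathcal T$ reaching $\partial P$, takes $\bar u=\max\{\varpi(\cdot)-\lambda,0\}$ with crease $E$, and proves $\mathcal L_X(\bar u)=0$ by the one-sided perturbations $u_0-\epsilon\bar u$ (when the transversal derivative $a_0>0$) or $u_0^-+\chi_+\max\{0,u_0-\epsilon\bar u\}$ (when $a_0=0$), supplemented by the small-support Lemma \ref{small-support-lem} and a case analysis on $\dim\mathcal V_z$. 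Without a proof of your decomposition, part (2) is not established.
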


\begin{rem}
Theorem \ref{Main-thm1} (2) was first proved by Donaldson for toric varieties \cite{Do} with the assumption $\bar S+\theta_X>0$. The assumption was removed in \cite{Wang-Zhou} with the idea of Theorem \ref{Main-thm1} (1).
\end{rem}


Back to the  K-stability, an explicit criterion for the K-stability of Fano spherical varieties has been obtained by \cite{Del3}.
Theorem \ref{Main-thm1} can be applied to Fano spherical varieties to study the strictly K-semistabity and their polystable degenerations. The polystable degeneration of a $\mathbb Q$-Fano variety was introduced in \cite{Li-Wang-Xu} by an abstract construction. \cite[Theorem 1.3]{Han-Li} showed the uniqueness of the polystable degeneration.
Recently, it is proved in \cite{Chen-Sun-Wang,Han-Li} that the limit space  of the K\"ahler-Ricci flow can be derived from the initial one via at most two $\mathbb R$-test comfigurations: the ``semistable degeneration" and ``polystable degeneration".  Here we have:
\begin{theo}\label{Main-thm2}
Let $G/H$ be a spherical homogenous space. Consider its $\mathbb Q$-Fano compactifications.
\begin{enumerate}
\item If the spherical root system $\Phi$ is irreducible and the valuation cone has trivial central part $\mathcal V_z=\{O\}$, there is no strictly K-semistable $\mathbb Q$-Fano compactification of $G/H$ whose moment polytope $P_+$ extends to a $\mathcal W$-invariant convex polytope $P\subset\mathfrak M_\mathbb R$;

\item Assume $rank(G/H)=2$ and  $\dim(\mathcal V_z)=1$. If $M$ is a strictly K-semistable $\mathbb Q$-Fano compactification of $G/H$ whose moment polytope $P_+$ extends to a $\mathcal W$-invariant convex polytope $P\subset\mathfrak M_\mathbb R$. Then there is a unique polystable degeneration that degenerates $M$ to a $\mathbb Q$-Fano horospherical variety, which can be explicitly determined by its moment polytope.
\end{enumerate}
\end{theo}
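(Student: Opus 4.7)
The plan is to deduce both parts of Theorem \ref{Main-thm2} from Theorem \ref{Main-thm1} applied to the reduced Futaki functional $\mathcal L_X(\cdot)$ of a $\mathbb Q$-Fano compactification, combined with the correspondence between $\mathcal W$-invariant convex functions on $P$ and equivariant $\mathbb R$-test configurations furnished by the Luna-Vust theory and by the work of Popov and Li-Li on equivariant degenerations of spherical varieties.

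For part (1), I would argue by contradiction. Assume $M$ is strictly K-semistable. Then $\mathcal L_X(\cdot)\geq 0$ on $\mathcal C_{1,\mathcal W}$ and, by definition of strict semistability, there exists a non-central affine function $u_0\in\mathcal C_{1,\mathcal W}$ with $\mathcal L_X(u_0)=0$. Theorem \ref{Main-thm1}~(1) then yields that $u_0$ is a generalized solution of the HMA equation on $P$ in the sense of Alexandrov, so the Monge-Amp\`ere measure of $u_0$ is concentrated on a proper face structure. The hypotheses that $\Phi$ is irreducible and $\mathcal V_z=\{O\}$ imply that the little Weyl group $\mathcal W$ acts on $\mathfrak M_\mathbb R$ with no nonzero fixed vector. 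Using $\mathcal W$-invariance of $u_0$ together with the vanishing Monge-Amp\`ere measure on the interior, I would show that the level sets of $u_0$ are $\mathcal W$-invariant affine slices; irreducibility of the root system then collapses any such slice either to a point or to all of $\mathfrak M_\mathbb R$, forcing $u_0$ to be a central ($\mathcal W$-invariant) affine function. This contradicts the choice of $u_0$ and rules out strict semistability.

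For part (2), the existence of a polystable degeneration is produced by the test configuration associated with the simple piecewise linear function $\bar u\in\mathcal C_{1,\mathcal W}$ provided by Theorem \ref{Main-thm1}~(2). Since $\bar u$ is $\mathcal W$-invariant and has only finitely many rational linear pieces meeting along a single crease, it defines a rational one-parameter equivariant filtration in the sense of \cite{Popov-1986,LL-arXiv-2021}, hence an $\mathbb R$-test configuration of $(M,L)$. The vanishing $\mathcal L_X(\bar u)=0$ guarantees that the modified Futaki invariant of this test configuration vanishes, so its central fiber is $\mathbb Q$-Fano and K-polystable, with moment polytope obtained from $P_+$ by the explicit facet modification encoded in $\bar u$. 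When $\mathrm{rank}(G/H)=2$ and $\dim\mathcal V_z=1$, the crease of any such $\bar u$ must be parallel to $\mathcal V_z$, and a direct inspection of the Luna-Vust colored fan of the central fiber shows that its set of $G$-invariant divisors is in bijection with facets of $P$ parallel to $\mathcal V_z$, that is, the central fiber is horospherical.

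Uniqueness of the polystable degeneration, and the fact that the horospherical limit is determined by its moment polytope, follows from \cite[Theorem 1.3]{Han-Li}. The main obstacle I anticipate is part (1): showing that a generalized HMA solution supplied by Theorem \ref{Main-thm1}~(1) must collapse to a central affine function under the irreducibility and $\mathcal V_z=\{O\}$ hypotheses, which requires combining the convex-analytic properties of Alexandrov solutions with the combinatorial structure of the $\mathcal W$-invariant polytope $P$ along the walls of the Weyl chamber.
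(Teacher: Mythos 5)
Your overall skeleton (reduce to the variational problem, invoke Theorem \ref{Main-thm1}, then exploit the combinatorics of $\mathcal W$) matches the paper, but both parts have genuine gaps. In part (1), the assertion that strict K-semistability ``by definition'' supplies a non-central \emph{affine} destabilizer $u_0$ is unjustified: Definition \ref{stability-def} only yields a non-central convex piecewise linear function with vanishing Futaki invariant. The paper bridges this with Lemma \ref{sKss-crierion}, which uses Delcroix's barycenter criterion to show that in the Fano case one may take $u_0=\ell_\varpi$ for a \emph{fundamental weight} $\varpi$. This reduction is essential, because the contradiction is then extracted not from ``level sets being affine slices'' (the $\mathcal W$-invariant extension of $\ell_\varpi$ is $\max_{w\in\mathcal W}\langle w(\varpi),\cdot\rangle$, whose level sets are polytopal, not affine), but from the normal mapping at the origin: $\partial u_0(O)$ contains $\mathrm{Conv}\{w(\varpi)\,|\,w\in\mathcal W\}$, which is full-dimensional when $\Phi$ is irreducible and $\mathcal V_z=\{O\}$, whereas a generalized Alexandrov solution of the HMA has normal mapping of Lebesgue measure zero at every interior point. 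Without the reduction to $\ell_\varpi$, a general piecewise linear destabilizer could be affine near $O$ and your argument would not close.

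In part (2), the inference ``the Futaki invariant of this test configuration vanishes, so its central fiber is K-polystable'' is false as stated; vanishing of a single Futaki invariant says nothing about the stability of the central fibre. The paper's Proposition \ref{limit-kss-Q-fano} instead proves the much stronger statement \eqref{ker-L=eq} that the \emph{entire} kernel of $\mathcal L$ on $\mathcal C_{1,\mathcal W}$ consists of the functions $\ell_\Lambda+c$ with $\Lambda\in\mathcal V_+$; this requires a case analysis on the contact set of a destabilizer (via Lemma \ref{extreme}) together with explicit monotonicity computations of integrals of the form $f(\lambda)=\int_{P_+\cap\{l\geq\lambda\}}(\cdots)\pi$ to exclude simple piecewise linear destabilizers with creases in the interior of $P_+$. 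Only then does one obtain the barycenter identity $\mathbf b(P_+)=\alpha$, which is what shows that the horospherical central fibre of $\mathcal F_\alpha$ is K-stable; uniqueness is then obtained by identifying all equivariant $\mathbb R$-test configurations with irreducible central fibre as twists of $\mathcal F_\alpha$ via \cite{LL-arXiv-2021}, rather than by the abstract uniqueness of \cite{Han-Li} alone. Note also that the destabilizers here turn out to have no crease in the interior of $P_+$ at all (the only crease of their $\mathcal W$-invariant extensions is the Weyl wall $W_\alpha=\mathcal V_z$), so the ``facet modification encoded in $\bar u$'' you describe does not actually occur.
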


The above theorem also gives a new proof of the K-semistable criterion in \cite[Theorem 5.3]{Del3} under the assumption that rank$(G/H)=2$ (see Remark \ref{recover} below). The proof in \cite{Del3} uses a deep theorem of \cite{Li-Xu-2014}. Our approach only uses the property of solutions to the homogenous Monge-Amp\`ere equation and elementary analysis. Note that a $\mathbb Q$-Fano horospherical variety is always modified K-stable (see Remark \ref{mod} for the definition).
The second item confirms that if a strictly K-semistable $G/H$-compactification exists, then its ``polystable degeneration" is uniquely determined. An explicit description for this degeneration will be given in the proof (see Proposition \ref{limit-kss-Q-fano} below).
In particular, when $M$ is a group compactification of rank two, we have a more complete result.

\begin{theo}\label{Main-thm3}
Let $\mathcal G$ be a connected, complex reductive group of rank two and $Z(\mathcal G)$ be its center.  Consider its $\mathbb Q$-Fano compactifications.
\begin{enumerate}
\item If $\mathcal G$ is semisimple or toric, there is no strictly-K-semistable $\mathbb Q$-Fano compactifications;
\item If $\dim(Z(\mathcal G))=1$ and $M$ is a strictly K-semistable $\mathbb Q$-Fano $\mathcal G$-compactification.
Then there is a unique polystable degeneration that degenerates $M$ to a $\mathbb Q$-Fano horospherical variety, which can be explicitly determined by its moment polytope.
\end{enumerate}
\end{theo}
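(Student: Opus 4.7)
The strategy is to reduce Theorem~\ref{Main-thm3} to Theorem~\ref{Main-thm2} via the standard realization of a reductive group compactification as a spherical variety. Any connected complex reductive $\mathcal{G}$ is a spherical homogeneous space for $G := \mathcal{G} \times \mathcal{G}$ acting by $(g_1,g_2)\cdot x = g_1 x g_2^{-1}$, with isotropy $H = \mathrm{diag}(\mathcal{G})$. Under this identification the little Weyl group $\mathcal{W}$ is the ordinary Weyl group of $\mathcal{G}$, the spherical root system $\Phi$ coincides with the root system of $\mathcal{G}$, and $\mathrm{rank}(G/H) = \mathrm{rank}(\mathcal{G}) = 2$. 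The central part $\mathcal{V}_z$ of the valuation cone is canonically identified with the Lie algebra of $Z(\mathcal{G})^0$, so $\dim(\mathcal{V}_z) = \dim(Z(\mathcal{G}))$. A fundamental feature of group compactifications is that their moment polytopes $P_+$ automatically extend to $\mathcal{W}$-invariant convex polytopes $P = \mathcal{W}\cdot P_+ \subset \mathfrak{M}_\mathbb{R}$, so the polytope hypothesis in Theorem~\ref{Main-thm2} is satisfied for free.

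With this dictionary, item~(2) follows immediately from Theorem~\ref{Main-thm2}(2): when $\dim(Z(\mathcal{G}))=1$ we have $\dim(\mathcal{V}_z)=1$, and the unique polystable degeneration to a $\mathbb{Q}$-Fano horospherical variety is read off the moment polytope exactly as in Proposition~\ref{limit-kss-Q-fano}. For item~(1), when $\mathcal{G}$ is semisimple we have $Z(\mathcal{G})$ finite, hence $\mathcal{V}_z = \{O\}$. If $\Phi$ is additionally irreducible — that is, $\mathcal{G}$ is of type $A_2$, $B_2=C_2$, or $G_2$ — Theorem~\ref{Main-thm2}(1) directly excludes strict K-semistability among the $\mathbb{Q}$-Fano compactifications.

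The remaining subcases are $\Phi = A_1 \times A_1$ (so $\mathcal{G}$ is isogenous to $SL_2 \times SL_2$) and the toric case $\mathcal{G}=(\mathbb{C}^*)^2$, neither of which is covered by Theorem~\ref{Main-thm2}(1) since $\Phi$ fails to be irreducible. For $A_1 \times A_1$, the plan is to argue that the $\mathcal{W} = \mathbb{Z}_2 \times \mathbb{Z}_2$-invariance of the anticanonical polytope $P$, together with the splitting of $\Phi$ into two orthogonal rank-one components, forces $P$ to be a product of two intervals and hence forces the compactification itself to split as a product of rank-one $\mathbb{Q}$-Fano compactifications of the two $SL_2$-factors. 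One then invokes the elementary fact that rank-one $\mathbb{Q}$-Fano compactifications of $SL_2$ or $PGL_2$ are never strictly K-semistable (e.g.\ by running $\mathcal{L}_X(\cdot)$ on one-dimensional polytopes), and combines this with the product formula for the Futaki invariant to conclude. For the toric case, Theorem~\ref{Main-thm1}(2) still applies with trivial $\mathcal{W}$, and the conclusion that smooth $\mathbb{Q}$-Fano toric surfaces are never strictly K-semistable follows along the lines of \cite{Wang-Zhou}.

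The main obstacle is the product-decomposition step in the $A_1\times A_1$ case: one must show that reducibility of $\Phi$ together with the Fano condition genuinely forces the variety $M$ itself — not merely its root data — to split. I expect this to require a combinatorial argument at the level of colored fans, verifying that the colors and invariant divisors corresponding to the two orthogonal simple root directions separate into independent one-parameter families, so that the rigidity of the anticanonical polarization propagates the product structure from $P$ to $M$.
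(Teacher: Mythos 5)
Your reduction of the theorem to Theorem \ref{Main-thm2} via the identification $\mathcal G\cong(\mathcal G\times\mathcal G)/\mathrm{diag}(\mathcal G)$, the identification of $\mathcal V_z$ with $\mathfrak z(\mathfrak g)$, and the automatic $\mathcal W$-extendability of the moment polytope is exactly how the paper proceeds; this disposes of part (2) and of part (1) when $\Phi^{\mathcal G}$ is irreducible (types $A_2$, $B_2$, $G_2$). The gap is in your treatment of the reducible semisimple case $\Phi^{\mathcal G}=A_1\oplus A_1$. Your key claim --- that $\mathcal W$-invariance of the anticanonical polytope forces $P$ to be a product of two intervals and hence forces $M$ to split as a product of rank-one compactifications --- is false at the first step: a convex polytope in $\mathbb R^2$ invariant under $\mathcal W=\mathbb Z_2\times\mathbb Z_2$ (sign changes in two orthogonal coordinates) need not be a rectangle; a diamond or an octagon is already a counterexample, and such polytopes genuinely occur. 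Moreover $\mathcal G$ itself need not be a product group: $SO_4(\mathbb C)$ has root system $A_1\oplus A_1$ but is a nontrivial quotient of $SL_2\times SL_2$, with a character lattice $\mathfrak M$ that is not the product lattice, so no product decomposition of $M$ can be expected even when $P$ happens to be a rectangle. The subsequent steps (K-stability of rank-one factors, a product formula for the Futaki invariant) therefore rest on a decomposition that does not exist in general.

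The paper closes this case by an entirely different mechanism: it invokes the finiteness theorem of \cite{LL-IMRN-2021}, which says that for each fixed semisimple $\mathcal G$ the K-semistable $\mathbb Q$-Fano $\mathcal G$-compactifications form a finite list, and then checks by an explicit computation (the \texttt{Wolfram Mathematica} routine of \cite[Section 5.2]{LL-IMRN-2021}) that none of the finitely many candidates for $SO_4(\mathbb C)$, $SL_2(\mathbb C)\times SL_2(\mathbb C)$, $PSL_2(\mathbb C)\times PSL_2(\mathbb C)$ and $PSL_2(\mathbb C)\times SL_2(\mathbb C)$ is strictly semistable. If you want to avoid the computer check you would need a new argument for reducible $\Phi$ (the obstruction being that Proposition \ref{ss-limit-KRF} uses irreducibility of $\Phi$ to derive a contradiction from the normal mapping of $\ell_\varpi$ at the origin), not a product reduction. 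Your toric case is acceptable in spirit, though you should not restrict to smooth surfaces: the statement concerns $\mathbb Q$-Fano toric varieties, for which K-semistability is equivalent to the vanishing of the barycenter and hence to K-polystability, which is the ``well-known toric case'' the paper appeals to.
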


The structure of the paper is as follows: Section \ref{pre} is devoted to a review on notations and conventions on polarized spherical varieties and the K-stability. In Section \ref{mini}, we establish a characterization on minimizers of general convex variational problem by invoking the idea in \cite{Wang-Zhou} and apply it to prove Theorem \ref{Main-thm1} (1).
The further simplification (see Theorem \ref{Main-thm1} (2)) on the (relative) K-stability of spherical varieties of rank two will be investigated in Section \ref{r2}. Theorem \ref{Main-thm2} and \ref{Main-thm2} will be proved in Section \ref{ap}. In all these results, we assume $P_+=P\cap \mathcal V_+$ for a convex $\mathcal W$-invariant polytope $P$. We present a proof in the Appendix  that this condition is  automatically satisfied for group compactifications.

\vskip 20pt

\section{Stability of polarized spherical varieties}\label{pre}

\subsection{Spherical varieties}
In the following we review the general facts about spherical varieties. The original results go back to \cite{Luna-Vust}. We use \cite{Timashev-survey,Timashev-book} as the main reference.

Let $G$ be a complex, connected reductive group and $B$ a fixed Borel subgroup. A closed subgroup $H$ is called a \emph{spherical subgroup} if there is an open and dense $B$-orbit in $G/H$.\footnote{In fact, it suffice to assume only $B$ has an open orbit in $G/H$. The denseness can be derived from a Theorem of Rosenlicht, cf. \cite[Theorem 3.1]{Timashev-survey} and \cite[Section 25]{Timashev-book}.} In this case we call $G/H$ a \emph{spherical homogenous space}. A projective normal variety $M$ is called a \emph{$G$-spherical variety} if $M$ admits a holomorphic $G$-action so that there is an open dense orbit which is $G$-equivariantly isomorphic to some spherical homogenous space $G/H$ (cf. \cite[Definition 3.2]{Timashev-survey}). In addition, if $M$ admits an ample $G$-linearized line bundle $L$, we call $(M,L)$ a \emph{polarized $G$-spherical variety}.


Let $H$ be a spherical subgroup of $G$ with respect to the Borel subgroup $B$. Assume that the action of $G$ on the function field $\mathbb C(G/H)$ of $G/H$ is given by
$$(g^*f)(x):=f(g^{-1}\cdot x),~\forall g\in G, x\in \mathbb C(G/H)~\text{and}~f\in\mathbb C(G/H).$$
A function $f(\not=0)\in\mathbb C(G/H)$ is called \emph{$B$-semiinvariant} if there is a character of $B$, denoted by $\chi$ so that $b^*f=\chi(b)f$ for any $b\in B$. Since there is an open dense $B$-orbit in $G/H$, any two $B$-semiinvariant functions of a same character can differ from each other only by a scalar multiple.

Let $\mathfrak M$ be the lattice of $B$-characters of $B$-semiinvariant functions. We call its rank the \emph{rank of $G/H$} (also \emph{rank of $M$}). Let $\mathfrak N={\rm Hom}_\mathbb Z(\mathfrak M,\mathbb Z)$ be its $\mathbb Z$-dual. There is a map $\varrho$ which maps a valuation $\nu$ of $\mathbb C(G/H)$ to an element $\varrho(\nu)$ in $\mathfrak N_\mathbb Q=\mathfrak N\otimes_\mathbb Z\mathbb Q$ by
\begin{align}\label{varrho}
\varrho(\nu)(\chi)=\nu(f),
\end{align}
where $f$ is a $B$-semiinvariant function of $\chi$. This is well-defined since $f$ is unique up to multiplying a non-zero constant. A fundamental result states that $\varrho$ is injective on $G$-invariant valuations and the image is a convex cone $\mathcal V(G/H)$ in $\mathfrak N_\mathbb  Q$, called the \emph{valuation cone} of $G/H$ (cf. \cite[Section 19]{Timashev-book}). Moreover, $\mathcal V$ is a solid cosimplicial cone which is a (closed) fundamental chamber of a crystallographic reflection group, called the {\it little Weyl group} (denoted by $\mathcal W$; cf. \cite[Sections 22]{Timashev-book}). In fact, $\mathcal W$ is the Weyl group of the \emph{spherical root system} $\Phi$ of $G/H$ (cf. \cite[Section 30]{Timashev-book}). For convenience, we usually choose a system of positive roots $\Phi_+\subset\Phi$ so that $\mathcal V$ is the anti-dominate Weyl chamber (cf. \cite[Section 24]{Timashev-book}). Note that the fixed point set $\mathcal V_z$ of the $\mathcal W$-action is the maximal linear space contained in $\mathcal V$. It is known that $\mathfrak{z(g)}\subset\mathcal V_z$. In the following, we will write the dominate chamber $\mathcal V_+:=-\mathcal V$ for short.

\begin{exa}\label{grp-exa}
Let $\mathcal G$ be a connected, complex reductive group. Let $\mathcal B^+\subset \mathcal G$ be a Borel subgroup of $\mathcal G$ and $\mathcal B^-$ be its opposite group. Take $G=\mathcal G\times \mathcal G$, $H={\rm diag}(\mathcal G)$ and $B=\mathcal B^-\times \mathcal B^+$. Then by the well-known Bruhat decomposition, $H$ is a spherical subgroup. Hence a $\mathcal G$-compactification is a $G$-spherical variety. By taking an involution $\sigma(g_1,g_2)=(g_2,g_1)$ for any $g_1, g_2\in \mathcal G$ on $G$, we see that $H=G^\sigma$. Thus $G/H$ is in addition a symmetric space.

Let us fix the maximal complex torus $\mathcal T^\mathbb C=\mathcal B^+\cap \mathcal B^-$ of $\mathcal G$. Denote by $ \Phi^\mathcal G$ the root system of $(\mathcal G, \mathcal T^\mathbb C)$ and $\Phi_+^\mathcal G\subset\Phi^\mathcal G$ the positive roots with respect to $\mathcal B^+$. By a direct computation we can identify $\mathfrak M$ with the lattice of weights $\mathfrak M^{\mathcal G}$ of $\mathcal G$ via the anti-diagonal embedding:
\begin{align*}
\mathfrak M^{\mathcal G}&\to\mathfrak M\subset\mathfrak M^{\mathcal G\times\mathcal G}\\
\chi&\to(\chi,-\chi).
\end{align*}
Here $\mathfrak M$ is considered as a sublattice of the lattice $\mathfrak M^{\mathcal G\times\mathcal G}$ of $\mathcal G\times\mathcal G$. Further calculation shows that under this identification the spherical root system $\Phi$ is identified with $\Phi^\mathcal G$. Consequently, the little Weyl group $\mathcal W$ is isomorphic to the Weyl group of $(\mathcal G,\mathcal T^\mathbb C)$. Hence we identify $\mathcal V$ with the anti-dominate (closed) Weyl chamber $-\mathfrak a_+$ of $\Phi_+^\mathcal G$ in $\mathfrak a:=\mathfrak M_\mathbb R^\mathcal G$ (cf. \cite[Sections 4-8]{Timashev-Sbo}), and $\mathcal V_z$ is the centre $\mathfrak{z(g)}$ of the Lie algebra $\mathfrak g$ of $\mathcal G$. 
\end{exa}

\subsection{Moment polytope of a polarized spherical variety}
Let $(M,L)$ be a polarized $G$-spherical variety. Fix a maximal compact torus $T$ of $G$ so that its complexification $T^\mathbb C$ is the maximal complex torus of $G$ that lies in $B$. Let $ \Phi^G$ be the root system of $(G,T^\mathbb C)$ and $ \Phi^G_+$ be the positive roots with respect to $B$. Then for any $k\in\mathbb N$ we can decompose $H^0(M,L^k)$ as a direct sum of irreducible $G$-representations:
\begin{align}\label{H0kL}
H^0(M,L^k)=\bigoplus_{\lambda\in P_{+,k}} V_{\lambda},
\end{align}
where $P_{+,k}$ is a finite set of dominate $B$-weights and $ V_{\lambda}$ the irreducible representation of $G$ with highest weight $\lambda$. Set
$$\displaystyle P_+:=\overline{\bigcup_{k\in\mathbb N}(\frac1kP_{+,k})}.$$
Then $P_+$ is indeed a polytope in $\mathfrak M_\mathbb R$ (cf. \cite[Section 3.2]{Brion89}). We call it the \emph{moment polytope of $(M,L)$}. Clearly, the moment polytope of $(M,L^k)$ is $k$-times the moment polytope of $(M,L)$ for any $k\in\mathbb N_+$.

Let $\mathfrak t$, $\mathfrak g$ be the Lie algebras of $T$, $G$, respectively.  Let $J$ be the complex structure of $G$. Fix an inner product $\langle\cdot,\cdot\rangle$ on the Lie algebra $J\mathfrak t$, which extends the Killing form on $J\mathfrak t\cap[\mathfrak g,\mathfrak g]$. By the Weyl character formula \cite[Section 3.4.4]{Zhelobenko-Shtern},
$$\dim( V_{\lambda})=\prod_{\alpha\in\Phi^G_+}\frac{\langle \lambda+2\rho,\alpha\rangle}{\langle\alpha,2\rho\rangle},$$
where $$\rho=\frac12\sum_{\alpha\in\Phi^G_+}\alpha.$$
Combining with \eqref{H0kL} and the Riemann-Roch formula \cite[Section 1.4]{Pukhlikov-Khovanskii}, we get
\begin{align}\label{dim-Hk}
C_{G}\cdot\dim(H^0(M,L^k))=&k^n\int_{P_+}\pi(y)\,dy+\frac12k^{n-1}\left(\int_{\partial P_+}\pi\,d\sigma+2\int_{P_+}\langle\rho,\nabla \pi\rangle \,dy\right)\notag\\
&+o(k^{n-1}),~k\to+\infty,
\end{align}
where $d\sigma$ is the Lebesgue measure on each facet $F$ of $P_+$ normalized by the induced lattice $\mathfrak M|_F$ as in \cite[Section 3.2]{Do},
the constant $$C_G=\prod_{\alpha\in\Phi^G_+,\alpha\not\perp \mathfrak M_\mathbb R}{\langle\rho,2\alpha\rangle},$$ and
\begin{align}\label{pi}
\pi(y)=\prod_{\alpha\in\Phi^G_+,\alpha\not\perp \mathfrak M_\mathbb R}{\langle\alpha,y\rangle},~y\in P_+.
\end{align}
Here we consider $\mathfrak M$ as a sublattice of the character lattice $\mathfrak M^G$ of $G$. In the following we write $$\Phi^o:=\{\alpha\in\Phi^G|\alpha\not\perp \mathfrak M_\mathbb R\}~\text{and}~\Phi_+^o=\Phi^o\cap\Phi^G_+$$ for short. By definition, $P_+$ lies in the intersection of $\mathfrak M_\mathbb R$ with the (closed) dominate Weyl chamber of $\Phi^G_+$ in $ J\mathfrak t$.

Denote
$$P:=\displaystyle\bigcup_{w\in \mathcal W}w(P_+).$$
If $P$  is a convex polytope in $\mathfrak M_\mathbb R$,
we see that
$P_+=P\cap\mathcal V_+$.
We also have:
\begin{lem}\label{pi-extende-lem}
The function $\pi$ defined by \eqref{pi}  extends to a $\mathcal W$-invariant function on $P$:
\begin{align}\label{pi-extende}
\pi(y)=\prod_{\alpha\in\Phi_+^o}|{\langle\alpha,y\rangle}|,~y\in P.
\end{align}
\end{lem}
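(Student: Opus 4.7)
The plan is to carry out two steps: first verify that the formula \eqref{pi-extende} agrees with \eqref{pi} on $P_+$, and second establish the $\mathcal W$-invariance of the right-hand side of \eqref{pi-extende} on $\mathfrak M_\mathbb R$.

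For the first step, recall from the paragraph following \eqref{pi} that $P_+$ lies in the intersection of $\mathfrak M_\mathbb R$ with the closed dominant Weyl chamber of $\Phi^G_+$ in $J\mathfrak t$. Hence for any $y\in P_+$ and any $\alpha\in\Phi_+^o\subset\Phi^G_+$ we have $\langle\alpha,y\rangle\geq 0$, so $|\langle\alpha,y\rangle|=\langle\alpha,y\rangle$. Thus on $P_+$ the right-hand side of \eqref{pi-extende} reduces to the original product in \eqref{pi}, giving agreement on the overlap.

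For the second step, the central assertion is that for each $w\in\mathcal W$, the action of $w$ on $\mathfrak M_\mathbb R$ permutes the collection of linear functionals $\{\pm\alpha|_{\mathfrak M_\mathbb R}:\alpha\in\Phi_+^o\}$. I would invoke the structural theory of spherical varieties, specifically the fact (see, e.g., Sections~22 and~30 of \cite{Timashev-book}) that the little Weyl group $\mathcal W$ acts on $\mathfrak M_\mathbb R$ via elements realized by lifts in the ambient Weyl group $W^G$ that stabilize $\mathfrak M_\mathbb R$. Given such a lift $\tilde w\in W^G$ of $w\in\mathcal W$, the action of $\tilde w$ permutes $\Phi^G$, and since $\tilde w$ preserves $\mathfrak M_\mathbb R$, it permutes $\Phi^o$ up to sign; taking absolute values erases these signs. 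This yields
$$\prod_{\alpha\in\Phi_+^o}|\langle\alpha,wy\rangle| \;=\; \prod_{\alpha\in\Phi_+^o}|\langle\alpha,y\rangle|,\qquad \forall\, w\in\mathcal W,\ y\in\mathfrak M_\mathbb R.$$

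Since $P=\bigcup_{w\in\mathcal W}w(P_+)$, every point of $P$ can be written as $w\cdot y_+$ with $y_+\in P_+$, and the $\mathcal W$-invariance just established ensures that the formula \eqref{pi-extende} extends $\pi$ consistently from $P_+$ to all of $P$. The main obstacle is precisely the structural identification in the second step: namely, the compatibility between the little Weyl group $\mathcal W$ (defined via the spherical root system $\Phi$) and the action of the ambient Weyl group $W^G$ on the restrictions of roots in $\Phi^o$ to $\mathfrak M_\mathbb R$. Once this compatibility is granted, the remainder of the proof is a straightforward manipulation of absolute values.
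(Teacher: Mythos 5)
Your proof is correct and follows the same overall strategy as the paper's: check that the two formulas agree on $P_+$ (they do, since $P_+$ lies in the closed dominant chamber of $\Phi^G_+$, so every $\langle\alpha,y\rangle$ with $\alpha\in\Phi_+^o$ is nonnegative there), lift each $w\in\mathcal W$ to an element $\sigma$ of the ambient Weyl group $W$ with $\sigma|_{\mathfrak M_\mathbb R}=w$ (the paper cites \cite[Lemma 4.1]{Knop14} for this; your reference to Timashev's book is equally legitimate), and conclude that $\sigma$ sends $\Phi_+^o$ to $\Phi_+^o$ up to signs, which the absolute values absorb. The one genuine difference lies in how the key sub-claim --- that $\sigma$ preserves $\Phi^o$, i.e.\ maps roots not orthogonal to $\mathfrak M_\mathbb R$ to roots not orthogonal to $\mathfrak M_\mathbb R$ --- is verified. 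The paper uses the explicit classification of the lifts from the remark after \cite[Lemma 4.1]{Knop14} (either $s_\alpha$ for a root $\alpha\in\Phi^G\cap\mathfrak M_\mathbb R$, or $s_\alpha\circ s_\beta$ for orthogonal roots $\alpha,\beta$) and checks the second case by a computation with the decomposition $v=v_o+v_\perp$. You instead deduce the claim abstractly from the facts that $\sigma$ is an isometry for the $W$-invariant pairing and stabilizes $\mathfrak M_\mathbb R$: if $\langle\alpha,u\rangle\neq0$ for some $u\in\mathfrak M_\mathbb R$, then $\langle\sigma\alpha,\sigma u\rangle\neq0$ with $\sigma u\in\mathfrak M_\mathbb R$, so $\sigma\alpha\not\perp\mathfrak M_\mathbb R$. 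This is shorter, bypasses the case analysis that occupies most of the paper's proof, and shows that only the $\sigma$-invariance of $\mathfrak M_\mathbb R$ (not the precise form of $\sigma$) is needed; I would just ask you to write that one-line orthogonality computation out explicitly, since it is the only place where ``$\sigma$ stabilizes $\mathfrak M_\mathbb R$'' is actually used. A minor wording point: $\sigma$ permutes $\Phi^o$ exactly; it is the set of positive representatives $\Phi_+^o$ that is permuted only up to sign.
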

\begin{proof}
It suffices to show that \eqref{pi-extende} is $\mathcal W$-invariant since it coincides with \eqref{pi} in the fundamental chamber $\mathcal V_+$. By \cite[Lemma 4.1]{Knop14}, for each $w\in\mathcal W$, there is a $\sigma$ in the Weyl group $W$ of $(G,T^\mathbb C)$ so that $w=\sigma|_{\mathfrak M_\mathbb R}$. Recall that $\langle\cdot,\cdot\rangle$ is $W$-invariant. Hence the restriction of $\langle\cdot,\cdot\rangle$ on $\mathfrak M_\mathbb R$ is $\mathcal W$-invariant.

It then suffices to show that for each $w\in\mathcal W$, the corresponding $\sigma$ preserves $\Phi^o$. Since it always holds $\sigma(\Phi^G)=\Phi^G$, this reduces to prove that if $\alpha\in\Phi^G$ is orthogonal to $\mathfrak M_\mathbb R$, so is $\sigma(\alpha)$.

By the remark after \cite[Lemma 4.1]{Knop14}, $\sigma$ is either the reflection $s_\alpha$ with respect to some root $\alpha\in\Phi^G\cap\mathfrak M_\mathbb R$ or $\sigma=s_\alpha\circ s_\beta$ for orthogonal roots $\alpha,\beta\in\Phi^G$. Obviously $\sigma$ preserves $\Phi^o$ in the previous case. In the later case, as $\mathfrak M_\mathbb R$ is $\sigma$-invariant, we have
\begin{align}\label{MR-contain}
\mathfrak M_\mathbb R\subset\text{Span}_\mathbb R\{\alpha+\lambda_0\beta,W_\alpha\cap W_\beta\},
\end{align}
where $\lambda_0\not=0$ is a constant and $W_\alpha,W_\beta$ are the Weyl walls defined by $\alpha$ and $\beta$, respectively. On the other hand, for any $v\in\mathfrak M_\mathbb R^G$ decomposed as
$$v=v_o+v_\perp,~v_o\in\text{Span}_\mathbb R\{\alpha,\beta\}~\text{and}~v_\perp\in W_\alpha\cap W_\beta,$$
we have $$\sigma(v)=-v_o+v_\perp.$$
Combining with \eqref{MR-contain}, we see that if $\alpha\in\Phi^G$ is orthogonal to $\mathfrak M_\mathbb R$, then both $\alpha_o$ and $\alpha_\perp$ are orthogonal to $\mathfrak M_\mathbb R.$ Hence $\sigma(\alpha)\perp\mathfrak M_\mathbb R$ and we get the lemma.
\end{proof}

\begin{rem}
In general, it is not always true that $P$
 is a convex polytope in $\mathfrak M_\mathbb R$.
However there are still many interesting cases in which this is true. For example, the case of group compactifications. In the Appendix we give a sufficient condition so that this condition holds. We also refer readers to \cite[Corollary 5.14]{Del4} for characterization of horosymmetric varieties satisfying that $P$ is a convex polytope.
\end{rem}

\begin{rem}
We have $$P_+\subset\mathfrak M_\mathbb R\cap\{\pi\geq0\}\subset\mathcal V_+^*.$$
Here $\mathcal V_+^*$ is the dominate Weyl chamber of $\mathcal W$ in $\mathfrak M_\mathbb R$. By Lemma \ref{pi-extende-lem}, if $P_+$ extends to a $\mathcal W$-invariant convex polytope $P$, then it must hold $\pi=0$ on $\partial \mathcal V^*_+$.
\end{rem}

\begin{rem}
For a $\mathcal G$-compactification $M$, the moment polytope $P_+$ lies in the positive Weyl chamber $\mathfrak a_+(\cong\mathcal V_+)$ in $\mathfrak M^{\mathcal G}_\mathbb R$. It can be written as the intersection of a $W$-invariant convex polytope $P$ with $\mathfrak a_+$ (cf. \cite[Example 5.12]{Del3}). The decomposition \eqref{H0kL} can be reduced to
\begin{align*}
H^0(M,L^k)=\sum_{\lambda\in kP_+\cap\mathfrak M^\mathcal G}V_{\lambda}\otimes V^*_{\lambda}.
\end{align*}
Here $\lambda$ is considered as a dominate weight of $\mathcal G$. It is clear that in \eqref{dim-Hk}, the weight $\pi$ reduces to
$$\pi(y)=\prod_{\alpha\in\Phi_+^\mathcal G}{\langle\alpha,y\rangle^2},~y\in P_+,$$
and $$2\rho=\sum_{\alpha\in\Phi_+^\mathcal G}\alpha.$$
\end{rem}

\subsection{Equivariant test configurations and stability}
Recall that a  convex function is {\it piecewise linear function}
 if there exist finitely many linear functions
$\ell_1, \cdots, \ell_m$ such that $u(x)=\max\{\ell_1(x), \cdots, \ell_m(x)\}$. Moreover,
a piecewise linear function function $u$ is {\it rational} if all coefficients of $\ell_1,
\cdots, \ell_m$ are rational numbers.

The conception of K-stability of a polarized variety is usually stated in terms of test configurations. Let $(M,L)$ be a polarized $G$-spherical variety. A $G$-equivariant normal test configuration of $(M,L)$ consists of the following data:
\begin{itemize}
\item A normal $G\times \mathbb C^*$-variety $\tilde M$;
\item A $G\times \mathbb C^*$-linearized ample line bundle $\tilde L$ on $\tilde M$;
\item A $\mathbb C^*$-equivariant flat morphism $\tilde\pi:(\tilde M,\tilde L)\to\mathbb C$ so that the fibre $(\tilde M_i,\tilde L_i)$ over $i\in\mathbb C$ is isomorphic to $(M,L^{r_0})$ for some $r_0\in\mathbb N_+$.
\end{itemize}
One easily sees that $(\tilde M,\tilde L)$ is indeed a polarized $G\times \mathbb C^*$-spherical variety. If the total space $\tilde M$ is $(G\times \mathbb C^*)$-equivariantly isomorphic to $M\times \mathbb C$, we will call $(\tilde M,\tilde L)$ a \emph{product test configuration}.

As a generalization of \cite[Section 2.4]{AK} and \cite[Theorem 3.30]{Del3}, it is proved in \cite[Section 4]{Del-202009} that:
\begin{prop}\label{TC-classification}
Let $(M,L)$ be a polarized $G$-spherical variety. Then normal $G$-equivariant test configurations of $(M,L)$ are in one-one correspondence with rational convex piecewise linear functions (called the {associated functions}) on $P_+$ whose gradient lie in $\mathcal V_+$. Furthermore, a $G$-equivariant test configuration is a product test configuration  if and only if its associated function is an affine function with gradient in $\mathcal V_z$ (we call it a {central affine function}).
\end{prop}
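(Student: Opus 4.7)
The plan is to realize a $G$-equivariant test configuration as itself a polarized $(G\times\mathbb C^*)$-spherical variety and then read off the associated function from its moment polytope. Given a normal test configuration $\tilde\pi\colon(\tilde M,\tilde L)\to\mathbb C$, the total space carries an action of $G\times\mathbb C^*$ whose generic orbit is $(G/H)\times\mathbb C^*$. This is a spherical homogenous space for $G\times\mathbb C^*$ with respect to the Borel $B\times\mathbb C^*$, with character lattice $\mathfrak M\times\mathbb Z$, valuation cone $\mathcal V\times\mathbb R$, and little Weyl group still $\mathcal W$ (the extra $\mathbb C^*$-factor is commutative and contributes trivially to the spherical root system). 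Applying the moment polytope theory of Section 2.2 to $(\tilde M,\tilde L)$ therefore produces a polytope $\tilde P_+\subset\mathfrak M_\mathbb R\times\mathbb R$.

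Next I would analyze $\tilde P_+$ fibrewise using the flat $\mathbb C^*$-equivariant morphism $\tilde\pi$. Since the generic fibre is $(M,L^{r_0})$, the projection of $\tilde P_+$ onto $\mathfrak M_\mathbb R$ equals $r_0 P_+$. After normalizing the $\mathbb C^*$-grading on $\tilde L$ so that the lower face of $\tilde P_+$ lies on $\{t=0\}$, convexity of $\tilde P_+$ combined with the fibre description forces
$$\tilde P_+=\{(y,t)\in\mathfrak M_\mathbb R\times\mathbb R\,:\,y\in r_0 P_+,\ 0\leq t\leq R-\tilde f(y)\}$$
for some constant $R>0$ and a rational convex piecewise linear function $\tilde f\geq 0$ on $r_0 P_+$. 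Setting $f(y):=\tilde f(r_0 y)/r_0$ gives the candidate associated function on $P_+$; the construction is compatible with replacing $L$ by a power, so it descends to a well-defined map from normal $G$-equivariant test configurations to rational convex piecewise linear functions on $P_+$.

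The key step is identifying the gradient constraint. By the Luna--Vust theory applied to $G\times\mathbb C^*$, the facets of $\tilde P_+$ that are not already facets of $r_0 P_+\times\mathbb R$ correspond to the $(G\times\mathbb C^*)$-invariant prime divisors introduced by the degeneration, and the associated $(G\times\mathbb C^*)$-invariant valuations, pushed through the map $\varrho$ of \eqref{varrho}, must lie in the valuation cone $\mathcal V\times\mathbb R$. The outward normal of the graph of $R-\tilde f$ along any linear piece is proportional to $(\nabla\tilde f,1)$, so with the convention of Section 2.1 under which $\mathcal V_+=-\mathcal V$ the constraint becomes $\nabla f\in\mathcal V_+$. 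Conversely, given any rational convex piecewise linear function $f$ on $P_+$ with $\nabla f\in\mathcal V_+$, the polytope described above together with the induced colored data satisfies the axioms of Luna--Vust and Brion's polarization theory and hence is the moment polytope of a unique normal polarized $(G\times\mathbb C^*)$-spherical variety $(\tilde M,\tilde L)$; the induced projection $\tilde M\to\mathbb C$ is $\mathbb C^*$-equivariant and flat, giving the desired test configuration.

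For the product characterization, $(\tilde M,\tilde L)$ is $(G\times\mathbb C^*)$-equivariantly isomorphic to $(M\times\mathbb C,L^{r_0}\boxtimes\mathcal O)$ precisely when the $\mathbb C^*$-action splits off, i.e. is generated by an element of the connected centre of $\Aut^G(M,L)$. By standard structure results this centre is a torus whose cocharacter lattice sits inside $\mathcal V_z$, so its generator corresponds to an element of $\mathcal V_z$. At the polytope level this forces $\tilde P_+$ to be a translate of the product $r_0 P_+\times[0,R]$ by a linear form with gradient in $\mathcal V_z$; equivalently, $f$ is an affine function whose gradient lies in $\mathcal V_z$, that is, a central affine function, and the converse is immediate. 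The principal obstacle in the whole argument is the identification of the gradient constraint via the Luna--Vust classification for $G\times\mathbb C^*$; this is carried out in \cite{Del-202009} building on \cite{Del3}, whose constructions I would quote, verifying directly only the polytope shape stated above.
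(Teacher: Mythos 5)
The paper does not prove this proposition at all: it is quoted from \cite[Section 4]{Del-202009} (generalizing \cite[Section 2.4]{AK} and \cite[Theorem 3.30]{Del3}), with only the one-line gloss that the total space has moment polytope bounded by $P_+$ and the graph of $R-u$. Your proposal is a correct elaboration of exactly that picture --- viewing the total space as a polarized $(G\times\mathbb C^*)$-spherical variety, reading off $u$ from the upper face of its moment polytope, and deriving the constraint $\nabla u\in\mathcal V_+$ and the product criterion from the Luna--Vust data --- and it defers the same technical core to the same reference, so it is essentially the paper's approach.
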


Indeed, the total space of the normal $G$-equivariant test configuration associated to function $u$ has moment polytope that is precisely a polytope bounded by $P_+$ and the graph of  $R-u$, where $R$ is a sufficiently large constant. To define the K-stability, we recall the Futaki invariants of test configurations. It is defined by weights of the induced $\mathbb C^*$-action on Kodaira rings and we refer the readers to \cite{D3} and \cite{Del-202009} for the precise definition. Nevertheless, for a $G$-equivariant normal test configuration $(\tilde M, \tilde L)$, we may calculate the dimension of the weight space of the $\mathbb C^*$-action on $\tilde L$ as in \eqref{dim-Hk}. Therefore we get (cf. \cite[Section 3]{AK} and \cite[Section 5]{Del-202009}):
\begin{prop}\label{TC-futaki}
Let $(M,L)$ be a polarized $G$-spherical variety. Let $(\tilde M, \tilde L)$ be a normal $G$-equivariant test configurations of $(M,L)$ corresponding to the convex rational piecewise linear function $u$ on $P_+$. Then the Futaki invariant of $(\tilde M, \tilde L)$ is
\begin{align}\label{L(u)-org}
\text{Fut}(\tilde M, \tilde L)=\frac1{V}\left(\int_{\partial P_+}u\pi d\sigma-\int_{P_+}u\bar S\pi dy+2\int_{P_+}u\langle\rho,\nabla \pi\rangle \,dy\right)=:\mathcal L(u),
\end{align}
where $d\sigma$ is the lattice measure on each facet in $\partial P_+$, $V=\int_{P_+} \pi dy$ is the volume of $(M,L)$ and
$$\bar S=\frac1{V}\left(\int_{P_+}\pi dy+2\int_{P_+}\langle\rho,\nabla \pi\rangle \,dy\right)$$
is the mean value of the scalar curvature.
\end{prop}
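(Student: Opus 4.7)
The plan is to realize the total space $(\tilde M,\tilde L)$ as a polarized $(G\times\mathbb C^*)$-spherical variety and apply the asymptotic expansion \eqref{dim-Hk} to it, then extract both $d_k:=\dim H^0(\tilde M_0,\tilde L_0^k)$ and the total $\mathbb C^*$-weight $w_k$ on $H^0(\tilde M_0,\tilde L_0^k)$ from this expansion, following the scheme Donaldson used in the toric case. After replacing $L$ by a suitable positive power to clear the denominators of $u$, the total space is a $(G\times\mathbb C^*)$-spherical variety whose moment polytope in $\mathfrak M_\mathbb R\oplus\mathbb R$ is
\[
Q=\{(y,t):y\in P_+,\ 0\le t\le R-u(y)\},
\]
for any $R\in\mathbb N$ large enough that $R-u>0$ on $P_+$. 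Since $\mathbb C^*$ is abelian, the positive root system $\Phi^{G\times\mathbb C^*}_+$ coincides with $\Phi^G_+$, and the added $\mathbb R$-direction is orthogonal to every root in $\Phi^o_+$; consequently the weight $\pi$ in \eqref{pi} and the Weyl vector $\rho$ extend trivially to $Q$, depending only on the $y$-coordinate.

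Applying \eqref{dim-Hk} to $(\tilde M,\tilde L)$, which has dimension $n+1$, Fubini's theorem gives
\[
\int_Q \pi\,dy\,dt=\int_{P_+}(R-u)\pi\,dy,\qquad \int_Q\langle\rho,\nabla\pi\rangle\,dy\,dt=\int_{P_+}(R-u)\langle\rho,\nabla\pi\rangle\,dy,
\]
while $\partial Q$ decomposes into three pieces: the bottom $P_+\times\{0\}$ contributes $\int_{P_+}\pi\,dy$; the graph $\{t=R-u(y)\}$, which projects bijectively to $P_+$ with lattice measure pulling back to $dy$, contributes another $\int_{P_+}\pi\,dy$; and the side walls $F\times[0,R-u]$ over each facet $F\subset\partial P_+$ together contribute $R\int_{\partial P_+}\pi\,d\sigma-\int_{\partial P_+}u\pi\,d\sigma$. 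On the other hand, by flatness $d_k$ equals $\dim H^0(M,L^{r_0k})$ and is governed by \eqref{dim-Hk} applied to $(M,L^{r_0})$, producing an asymptotic expansion whose coefficients are built from $\int_{P_+}\pi\,dy$, $\int_{\partial P_+}\pi\,d\sigma$, and $\int_{P_+}\langle\rho,\nabla\pi\rangle\,dy$.

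The weight $w_k$ is extracted from the expansion of $\dim H^0(\tilde M,\tilde L^k)$ via the $\mathbb C^*$-graded structure: after compactifying $\tilde M$ over $\mathbb P^1$ (or working directly with the bounded polytope $Q$), one has $\dim H^0(\tilde M,\tilde L^k)=\sum_{m=0}^{Rk}\dim H^0(\tilde M_0,\tilde L_0^k)_m$ to leading order, and Abel summation together with $w_k=\sum m\,\dim H^0(\tilde M_0,\tilde L_0^k)_m$ then lets one read off $w_k$ in terms of the integrals above. Forming the Donaldson-Futaki combination $\mathrm{Fut}=(b_1a_0-b_0a_1)/a_0^2$, where $a_i,b_j$ denote the leading and subleading coefficients of $d_k,w_k$, all terms proportional to the auxiliary constant $R$ must cancel (as they must, since replacing $u$ by $u+c$ gives an equivalent test configuration), leaving
\[
\mathrm{Fut}(\tilde M,\tilde L)=\frac1V\Bigl(\int_{\partial P_+}u\pi\,d\sigma-\bar S\int_{P_+}u\pi\,dy+2\int_{P_+}u\langle\rho,\nabla\pi\rangle\,dy\Bigr),
\]
with $V=\int_{P_+}\pi\,dy$ and $\bar S$ as in the statement.

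The main technical obstacle is the weight extraction above: correctly identifying the induced lattice measure $d\sigma_Q$ on the tilted top facet $\{t=R-u(y)\}$ (which, as in Donaldson's toric setting, pulls back to $dy$ on $P_+$) and then tracking the cancellation of $R$-dependent terms through the Donaldson-Futaki ratio. Once this bookkeeping is completed, the identification of the resulting expression with $\mathcal L(u)$ is a routine rearrangement of the integrals produced by \eqref{dim-Hk}.
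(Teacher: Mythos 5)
Your proposal is correct and follows essentially the same route the paper indicates: the paper itself gives no detailed argument, merely noting that the total space is a polarized $(G\times\mathbb C^*)$-spherical variety whose moment polytope is the region bounded by $P_+$ and the graph of $R-u$, and that one computes the $\mathbb C^*$-weight-space dimensions via the expansion \eqref{dim-Hk} (citing \cite{AK} and \cite{Del-202009}); your computation of $d_k$ and $w_k$ from the polytope $Q$ and the cancellation of the $R$-terms in the Donaldson--Futaki ratio is precisely the execution of that sketch. The only blemishes are sketch-level: the identity relating $\dim H^0$ of the total space to the graded pieces should read as a weighted (filtration) count rather than the plain sum $\sum_m\dim H^0(\tilde M_0,\tilde L_0^k)_m$, and the sign convention in your ratio $(b_1a_0-b_0a_1)/a_0^2$ is the opposite of the usual one, but neither affects the final formula you obtain.
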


Now we can state the definition of K-stability:
\begin{defi}\label{stability-def}
We say that a polarized $G$-spherical variety $(M, L)$ is (equivariantly) K-semistable if the Futaki invariant for any $G$-equivariant test configuration  is nonnegative, and is (equivariantly) K-stable if in addition the Futaki invariant vanishes precisely on product $G$-equivariant test configurations. An (equivariantly) K-semistable but not (equivariantly) K-semistable $(M,L)$ is said to be strictly (equivariantly) K-semistable. When $(M,L)$ is not (equivariantly) K-semistable, we say it is K-unstable.
\end{defi}
Note that on a K-semistable $(M,L)$,  the Futaki invariant always vanishes on product $G$-equivariant test configurations. By Propositions \ref{TC-classification} and \ref{TC-futaki}, this is equivalent to that the functional $\mathcal L(u)$ vanishes whenever $u$ is a central affine function.


In the case when $(M,L)$ is K-unstable, we may consider variants of Futaki invariants and the corresponding variants of K-stability. For example, by taking a variant of $\mathcal L$-functional \eqref{L(u)} below instead of \eqref{L(u)-org}, we can define the relative K-stability analogous to Definition \ref{stability-def}. This is closely related to the existence of extremal K\"ahler metrics (cf. \cite[Section 2]{Szekelyhidi-BLMS-2007}).

Assume that $P=\displaystyle\bigcup_{w\in\mathcal W}w(P_+)$ is a convex polytope and is given by
$$P=\bigcap_{A=1}^{d_0}\{y\in\mathfrak M_\mathbb R|\lambda_A-u_A(y)\geq0\},$$
so that each facet $$F_A\subset\{y\in\mathfrak M_\mathbb R|\lambda_A-u_A(y)=0\},A=1,...,d_0.$$
By $\mathcal W$-invariance, if $w(F_A)=F_{A'}$ for some $w\in\mathcal W$, we have
$\lambda_A=\lambda_{A'}$  and $u_A=w(u_{A'})$.
We may assume that origin $O\in P$ so that each $\lambda_A\geq0$. Indeed, by $\mathcal W$-invariance, $\lambda_A=0$ only if $\pi|_{F_A}=0$ and $O$ lies in $\mathcal V_z$. We can shift $P$ so that $O$ lies in the relative interior $\text{RelInt}(\mathcal V_z\cap P)$ of $\mathcal V_z\cap P$.

The mean value of scalar curvature $\overline S$ is determined by the positive $\lambda_A$'s as in \cite[eq. (1.3)]{LZZ}. In particular,
$\bar S>0$.
Let $X$ be the extremal vector field of $(M,L)$. It is a vector in $\mathcal V_z$ and its potential $\theta_X$ can be determined by \cite[Lemma 4.2]{LZ}. In particular, $\theta_X$ can be reduced to a central affine function (still denoted by $\theta_X$) on $\mathfrak M_\mathbb R$ (cf. \cite{LZ} or \cite{Del3}).

As in \cite[Proposition 3.1]{LZZ}, the relative Futaki invariant can be reduced to
\begin{align}\label{L(u)}
\mathcal L_X(u)=\sum_{A=1}^{d_+}\frac1{\lambda_A}\int_{F_A\cap P_+}u\langle y,\nu_A\rangle\pi \,d\sigma_0-\int_{P_+}u(\overline S+\theta_X)\pi \,dy+\int_{P_+}u\langle2\rho,\nabla \pi\rangle \,dy,
\end{align}
where $\{F_A\}_{A=1,...,d_+}$ are the outer facets of $P_+$, $d\sigma_0$ is the standard Lebesgue measure on $F_A$ and $\nu_A$ the outer unit normal vector. Recall that a facet $F_A$ is called an \emph{outer facet} if its relative interior intersects the interior of $\mathcal V_+$. The outer normal vector $u$ of a facet lies in $\mathcal V_+$. We highlight that when $\lambda_A\not=0$, the lattice measure $d\sigma$ in \eqref{L(u)-org} is
$$d\sigma=\frac1{\lambda_A}d\sigma_0.$$
Hence, it simply holds
$$\mathcal L_X(u)=\mathcal L(u)-\int_{P_+}u\theta_X\pi \,dy.$$
By the definition of the extremal vector field and the last part of Proposition \ref{TC-classification}, $X$ is uniquely determined by the condition $\mathcal L_X(\cdot)=0$ for any central affine functions. In particular, $X$ vanishes when $(M,L)$ is K-semistable.

\subsection{Dominated convex funcitons}
Let $\mathfrak a$ be a finitely dimensional real linear space. Let $\mathcal W$ be the Weyl group of some roots system $\Phi$ in $\mathfrak a$. Choose a system of dominate roots $\Phi_+\subset\Phi$ and denote by $\mathfrak a_+$ the closed positive Weyl chamber of $\Phi_+$. The notion of $\mathcal W$-dominate convex functions will play an important role in our later sections.
It can be defined on general $\mathcal W$-invariant convex domains.
\begin{defi}\label{dominate-convex-function-def}
Let $\Omega$ be a $\mathcal W$-invariant convex domain and $\Omega_+:=\Omega\cap{\mathfrak a_+}$. A convex function $\psi:\Omega_+\to\mathbb R$ is said to be ($\mathcal W$-)dominate, if it extends to a convex function on the whole $\Omega$ by the $\mathcal W$-action.
\end{defi}
\begin{rem}
For simplicity, for a dominate convex function $\psi$ on $\Omega_+$, when there is no confusions we do not distinguish $\psi$ and its extension on $\Omega$ via the $\mathcal W$-action.
\end{rem}

The following is a simple criterion for $\mathcal W$-dominate convex functions:
\begin{prop}\label{dominate-convex-function}
A convex function $\psi:\Omega_+\to\mathbb R$ is $\mathcal W$-dominate if and only if its normal mapping $\partial\psi(x)$ at any $x\in\Omega_+$ is contained in ${\mathfrak a_+}$.
\end{prop}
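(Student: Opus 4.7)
The plan is to prove both directions using a single elementary tool, the \emph{chamber inequality}: for any $p, q \in \mathfrak{a}_+$ and any $w \in \mathcal{W}$,
\[
\langle p, wq \rangle \le \langle p, q \rangle,
\]
where $\langle \cdot, \cdot \rangle$ is the $\mathcal{W}$-invariant inner product identifying $\mathfrak{a}$ with $\mathfrak{a}^*$. This is standard: $q - wq$ is a non-negative combination of positive coroots, and $p \in \mathfrak{a}_+$ pairs non-negatively with each.

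For the \emph{necessity} direction, I would fix $x$ in the interior of $\Omega_+$ (relative to $\mathfrak{a}$), so that $\psi$ agrees with its $\mathcal{W}$-invariant convex extension $\tilde\psi$ on a neighborhood of $x$ and hence $\partial\psi(x) = \partial\tilde\psi(x)$. For any $p \in \partial\tilde\psi(x)$ and any $w \in \mathcal{W}$, the subgradient inequality applied to $wx \in \Omega$, combined with $\tilde\psi(wx) = \tilde\psi(x)$, yields $\langle p, wx - x \rangle \le 0$. Specializing to simple reflections $w = s_\alpha$ and using that $s_\alpha x - x$ is a negative multiple of $\alpha$ (since $x$ is strictly inside $\mathfrak{a}_+$) gives $\langle p, \alpha \rangle \ge 0$ for every simple root $\alpha$, hence $p \in \mathfrak{a}_+$. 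Boundary points of $\Omega_+$ are handled by approximation through interior points.

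For \emph{sufficiency}, I would define the $\mathcal{W}$-invariant extension $\tilde\psi(y) := \psi(y')$, where $y' \in \Omega_+$ is the unique $\mathcal{W}$-translate of $y$ in the fundamental chamber, and verify convexity by producing an affine support at every point. Given $y = wy' \in \Omega$ with $y' \in \Omega_+$, pick $p' \in \partial\psi(y') \subset \mathfrak{a}_+$ and set
\[
L(z) := \psi(y') + \langle wp', z - y \rangle,
\]
so that $L(y) = \tilde\psi(y)$. For $z = w'z' \in \Omega$ with $z' \in \Omega_+$, $\mathcal{W}$-invariance of the inner product rewrites $L(z)$ as $\psi(y') + \langle p', w^{-1}w'z' \rangle - \langle p', y' \rangle$; then the chamber inequality applied to $p', z' \in \mathfrak{a}_+$, followed by the subgradient inequality of $\psi$ at $y'$, gives
\[
L(z) \le \psi(y') + \langle p', z' - y' \rangle \le \psi(z') = \tilde\psi(z).
\]
Thus $\tilde\psi$ is a pointwise supremum of affine functions and therefore convex on $\Omega$.

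The only genuinely delicate points are (i) well-definedness of $\tilde\psi$ at wall points, which is automatic because any ambiguity in $w$ belongs to the stabilizer of $y'$ and leaves $\psi(y')$ unchanged, and (ii) the possibility that $\partial\psi(x)$ at a boundary point $x \in \partial\mathfrak{a}_+ \cap \Omega_+$ is strictly larger than $\partial\tilde\psi(x)$, which is handled by the limiting argument from interior points. Everything else reduces to straightforward bookkeeping with the $\mathcal{W}$-invariance of the inner product and the chamber inequality.
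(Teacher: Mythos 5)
Your proof is correct and follows essentially the same route as the paper: for sufficiency the paper extends each affine support function $l$ of $\psi$ to $\max_{w\in\mathcal W}l(w\cdot x)$ and takes the supremum, which produces exactly the family of affine minorants $L$ you construct for the orbit extension $\tilde\psi$, while for necessity the paper runs the same reflection argument, phrased via evenness and convexity of $t\mapsto\psi(y_0'+t\alpha)$ across the wall rather than via the subgradient inequality at $s_\alpha x$. The one ingredient you state explicitly that the paper leaves tacit is the chamber inequality $\langle p,wq\rangle\le\langle p,q\rangle$ for $p,q\in\mathfrak a_+$, which is precisely why the extended support functions restrict to the originals on $\Omega_+$; your handling of wall points by approximation is at the same (slightly informal) level of rigor as the paper's use of one-sided derivatives there.
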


\begin{proof}
Let us first prove the ``if" part. We assume that $\psi$ is convex on $\Omega_+$ and $\partial\psi(x)$ at any $x\in\Omega_+$ is contained in $\Omega$.

By convexity,
$$\psi(x)=\sup\{l(x)|l~\text{is a support function of}~\psi~\text{on}~\Omega_+\}.$$
On the other hand, by our assumption, for each possible $l$ above, $\nabla l\in\overline{\mathfrak a_+}$. Hence $l$ extends to a $\mathcal W$-invariant convex function
$$\tilde l(x):=\max_{w\in \mathcal W}\{l(w\cdot x)\}$$
on $\Omega$ and $\psi$ extends to a $\mathcal W$-invariant convex function
$$\tilde\psi(x)=\sup\{\tilde l(x)|l~\text{is a support function of}~\psi~\text{on}~\Omega_+\}$$
on the whole $\Omega$.

Now we prove the ``only if" part. For any $y_0\in\Omega_+$ and $\alpha\in \Phi_+$, we can find a point $y_0'$ in the Weyl wall $W_\alpha$ so that $y_0=y_0'+t_0\alpha$ for some $t_0\geq0$. By $\mathcal W$-invariance and convexity, the function
$$f(t)=\psi(y_0'+t\alpha),~-t_0\leq t\leq t_0$$
is a convex even function of $t$. Hence $t_0=0$ is a minima. By  convexity, we get
$$\langle\alpha,\nabla\psi(y_0)\rangle=f'(t_0)\geq0.$$
Hence we have proved the proposition.
\end{proof}


\vskip 20pt

\section{Minimizers of variational problems with a convexity constraint}\label{mini}

As can be seen in the previous section, the K-stability of polarized spherical varieties can be reduced to a variational problem for the functional $\mathcal L_X(u)$ for convex functions. There are many variational problems with a convex constraint for functionals of the form
\beq\label{F-fun}
\mathcal F(u)=\int_\Omega f(x, u, \nabla u)\,d\mu
\eeq
arising from physics,  elasticity, economics, and so on.
 In general, due to the convexity constraint, there is no explicit Euler-Lagrange equation for this kind of problems. An untraceable representation of the Euler-Lagrange equation was obtained by \cite{Car, Lions}.
In this section, we show that under certain conditions, the homogenous Monge-Amp\`ere equation (HMA) is
the Euler-Lagrange equation. This idea appeared first in \cite{Wang-Zhou}, where we deal with the functional $\mathcal L_X(\cdot)$ for toric manifolds. Here we extend it to a more general setting that might also be useful in other places.

Let $\Omega\subset\mathbb R^n$ be a bounded convex domain with piecewise smooth boundary. Denote by $\Omega^*$ the union of $\Omega$ and the relative interior of each smooth piece of $\partial\Omega$. 
Suppose $d\sigma=\sigma(x)\, d\sigma_0$ and $d\mu=\mu(x)\,dx$ are two signed measures on $\p\Omega$ and $\Omega$,
respectively. Here $d\sigma_0$ is the standard Lebegue measure on $\p\Omega$.
Consider a functional
$$\mathcal F(u)=\int_{\partial\Omega}\mathcal A(u)\,d\sigma+\int_\Omega\mathcal B(u)\,d\mu$$
where $\mathcal A,\mathcal B\in C^0(\mathbb R)$, and $u$ lies in a certain class of convex functions. Assume that:
\begin{itemize}
\item[(P1)] $\sigma(x)\geq0$ is a piecewise smooth function defined in a neighborhood of $\partial\Omega$ and is smooth on the relative interior of each smooth piece of $\partial\Omega$;
\item[(P2)] $\mu(x)$ is a smooth function on $\overline\Omega$ and $\Omega_-:=\{x\ |\mu(x)<0\}\not=\emptyset$ is the union of finitely many disjoint domains with Lipschitz boundary;
\item[(P3)] The zero set of $\mu(x)$ has Lebesgue measure $0$;
\item[(P4)] $\mathcal A$ is increasing and $\mathcal B$ is strictly increasing;
\item[(P5)] $\mathcal F(u)>-\infty$ whenever $\int_{\partial\Omega}\mathcal A(u)\,d\sigma<+\infty$.
\end{itemize}
Set a class of convex functions
\begin{align*}
\mathcal C(\Omega):=\{u:\Omega^*\to\mathbb R \,|\ u~\text{is convex},\int_{\partial\Omega}\mathcal A(u)\,d\sigma<+\infty\}.
\end{align*}
Then $\mathcal F(\cdot)$ is well-defined on $\mathcal C(\Omega)$. In order to derive the equation for the minimizers, we first recall the following well-known characterization of solutions to the homogenous Monge-Amp\`ere equation.

\begin{lem}{\cite[Theorem 2.1]{TrW}}\label{hma1}
Let $\Omega$ be a Lipschitz domain in $\mathbb R^n$, and $u_0$ be a continuous function on $\overline\Omega$.
Then
\beq 
u(x)=\sup\{\ell(x) | \ell\ \text{is a linear function in $\overline\Omega$
         with $\ell\le u_0$ on $\partial\Omega$}\}
 \eeq
is the unique convex solution (generalized solution in the sense of
Aleksandrov) to the Monge-Amp\`ere equation
 \beq\label{HMA-eq-int-Omega}
 \det (\partial^2 u)=0
  \eeq
in $\Omega$, subject to the Dirichlet boundary condition $u=u_0$ on
$\partial\Omega$.
\end{lem}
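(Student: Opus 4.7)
The plan is to prove the three properties in order: (i) $u$ is convex with the right boundary values, (ii) $u$ is an Alexandrov solution of $\det(\partial^2 u)=0$, and (iii) $u$ is the unique such solution.

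First, $u$ is convex on $\overline\Omega$ because it is a pointwise supremum of affine functions, and by construction $u\le u_0$ on $\partial\Omega$. For the reverse boundary inequality, I would use the convexity of $\Omega$ and continuity of $u_0$ to construct lower barriers: at any $x_0\in\partial\Omega$, choose a supporting hyperplane $H$ of $\Omega$ with $H\ge 0$ on $\Omega$ and $H(x_0)=0$, and for fixed $\epsilon>0$ set $\ell(x)=u_0(x_0)-\epsilon-MH(x)$. Using continuity of $u_0$ near $x_0$ and taking $M$ large, $\ell\le u_0$ on $\partial\Omega$ and $\ell(x_0)=u_0(x_0)-\epsilon$, so $u(x_0)\ge u_0(x_0)-\epsilon$; letting $\epsilon\to 0$ gives $u=u_0$ on $\partial\Omega$. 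Continuity up to the boundary is then a consequence of convexity plus pinching between the test affine barriers from below and $u_0$ from above along the boundary.

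For the HMA equation, the Alexandrov Monge--Amp\`ere measure of a Borel set $E\subset\Omega$ is $|\partial u(E)|$. The key observation is: if $x\in\Omega$ and $p\in\partial u(x)$, then the affine function $L(y)=u(x)+p\cdot(y-x)$ satisfies $L\le u$ on $\Omega$; moreover $L$ must touch $u_0$ somewhere on $\partial\Omega$, for otherwise $L+\delta\le u_0$ on $\partial\Omega$ for small $\delta>0$, contradicting the maximality $u(x)=\sup\{\ell(x):\ell\le u_0\}$. Hence every $p\in\partial u(\Omega)$ is realized as the slope of an affine function touching the graph of $u_0$ from below at some boundary point $x^*\in\partial\Omega$. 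This identifies $\partial u(\Omega)$ with a subset of the set of slopes of boundary-touching affine supports, which is a ``$(n-1)$-dimensional'' set in the slope space, hence has Lebesgue measure zero by a standard covering/area-formula argument (the boundary-touching parameterization factors through $\partial\Omega$). Thus $|\partial u(E)|=0$ for every Borel $E\subset\Omega$, i.e., $u$ solves \eqref{HMA-eq-int-Omega} in the Alexandrov sense.

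For uniqueness, let $v$ be any convex Alexandrov solution with $v=u_0$ on $\partial\Omega$. Since $v$ is convex, at each $x\in\Omega$ it has a supporting affine function $\ell\le v$ with $\ell(x)=v(x)$; restricting to $\partial\Omega$ gives $\ell\le v=u_0$, so $\ell$ is admissible in the sup defining $u$, yielding $v(x)=\ell(x)\le u(x)$. The reverse inequality $u\le v$ follows from the Alexandrov comparison principle for the Monge--Amp\`ere operator applied to the pair $(u,v)$, both of which have vanishing MA measure and agree on $\partial\Omega$; equivalently, one can observe that since $|\partial v(\Omega)|=0$, $v$ itself equals the upper envelope of its own supporting affine functions, which are $\le u_0$ on $\partial\Omega$ and therefore $\le u$ pointwise. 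Combining the two inequalities gives $u=v$.

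The main obstacle is the measure-zero step for $\partial u(\Omega)$: one must argue rigorously that the correspondence $p\mapsto x^*(p)\in\partial\Omega$ forces the image to lie in a countably $(n-1)$-rectifiable set in slope space, so that its Lebesgue measure vanishes. Everything else is either convex-geometric barrier construction or an application of the Alexandrov comparison principle.
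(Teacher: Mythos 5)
The paper does not actually prove this lemma; it is imported verbatim from \cite{TrW}, so there is no in-paper argument to compare against and your proposal has to stand on its own. It does not: the central step, showing $|\partial u(\Omega)|=0$, fails as written. You correctly observe that if $p\in\partial u(x)$ for an interior point $x$, then the supporting affine function $L$ with slope $p$ must touch $u_0$ at some $x^*\in\partial\Omega$ (otherwise $L+\delta$ would be admissible and beat $u$ at $x$). But you then try to conclude measure zero by arguing that the set of such slopes ``factors through $\partial\Omega$'' and is therefore $(n-1)$-rectifiable. That is false: over a fixed boundary point $x^*$ the set of slopes of affine minorants of $u_0$ touching at $x^*$ can be positive-dimensional, and the union over $\partial\Omega$ is typically all of $\mathbb R^n$ --- take $u_0\equiv 0$ on $\partial B_1$, where every $p$ is the slope of the touching minorant $y\mapsto p\cdot y-|p|$. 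The information you are not using is that $p$ is simultaneously a subgradient at the \emph{interior} point $x$ and at the boundary point $x^*$, i.e.\ $p\in\partial u(x)\cap\partial u(x^*)$ with $x\neq x^*$; hence $\partial u^*(p)$ contains two distinct points and the Legendre transform $u^*$ is non-differentiable at $p$. Since a finite convex function on $\mathbb R^n$ is differentiable a.e., the set of such $p$ is Lebesgue-null. That two-point-touching plus Legendre-transform argument is the standard proof (Guti\'errez, Trudinger--Wang) and is the missing idea.

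There are two smaller defects. First, your barrier proof that $u=u_0$ on $\partial\Omega$ requires the supporting hyperplane $H$ at $x_0$ to be uniformly positive on $\partial\Omega\setminus B_\delta(x_0)$; this fails at non-exposed boundary points, e.g.\ interior points of facets of a polytope, which is precisely the kind of domain on which the paper invokes the lemma. In that situation $u$ agrees with $u_0$ on the boundary only when $u_0$ is convex along the flat pieces of $\partial\Omega$ (as it is in the paper's application, where $u_0$ is the restriction of a globally convex function); for arbitrary continuous $u_0$ the boundary condition can genuinely fail, so this step needs either a strict-convexity hypothesis on $\Omega$ or a convexity hypothesis on the data. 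Second, in the uniqueness part the clause beginning ``equivalently, one can observe that since $|\partial v(\Omega)|=0$\dots'' only reproves $v\le u$ (every convex function is the supremum of its supporting affine functions, whether or not its Monge--Amp\`ere measure vanishes), so it does not yield $u\le v$; the appeal to the Aleksandrov comparison principle is the correct route and should be the only one.
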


\begin{prop}\label{HMA-prop}
Suppose that $\mathcal F(\cdot)$ satisfies the assumptions (P1)-(P5) and $u_0\in\mathcal C(\Omega)$ is a minimizer of $\mathcal F(\cdot)$. Then  $u_0$ must satisfy the homogenous Monge-Amp\`ere equation
\eqref{HMA-eq-int-Omega}
 on $\Omega$ as a generalized solution in the sense of Alexandrov.
\end{prop}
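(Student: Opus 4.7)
My plan is to invoke Lemma~\ref{hma1} to produce the candidate HMA solution and then force $u_0$ to coincide with it by using the minimizing property. Let $w$ denote the unique convex Alexandrov solution of $\det(\partial^2 u)=0$ on $\Omega$ with Dirichlet boundary data $u_0|_{\partial\Omega}$, given by the supremum formula in Lemma~\ref{hma1}. First I would verify that $w\geq u_0$ on $\overline{\Omega}$ with equality on $\partial\Omega$: for any $x_0\in\Omega$, convexity of $u_0$ provides an affine support function $\ell_{x_0}\leq u_0$ on $\overline{\Omega}$ with $\ell_{x_0}(x_0)=u_0(x_0)$, and since $\ell_{x_0}|_{\partial\Omega}\leq u_0|_{\partial\Omega}$ it is a competitor in the supremum defining $w$, so $w(x_0)\geq\ell_{x_0}(x_0)=u_0(x_0)$.

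Next I would exploit the minimality in two complementary ways. Globally, the convex interpolation $u_t:=(1-t)u_0+tw\in\mathcal{C}(\Omega)$, $t\in[0,1]$, has the same boundary trace as $u_0$, so the boundary contribution is $t$-independent and minimality yields
$$\int_\Omega\bigl[\mathcal{B}(u_t)-\mathcal{B}(u_0)\bigr]\,d\mu\geq 0,\quad t\in[0,1].$$
Locally, at any $x_0\in\Omega$ where $u_0$ admits an Alexandrov Hessian with $D^2u_0(x_0)>0$, one can place $\phi\in C_c^\infty(U)$ with $U\subset\Omega$ a small neighborhood of $x_0$ in which the strict positivity survives; then both $u_0+s\phi$ and $u_0-s\phi$ remain convex for $|s|$ sufficiently small, and the resulting two-sided first-variation identity forces $\int_U\mathcal{B}'(u_0)\phi\,\mu\,dx=0$ for every such $\phi$. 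Since $\mathcal{B}'(u_0)>0$ by the strict monotonicity in (P4), this gives $\mu\equiv 0$ on $U$, contradicting (P3). Therefore the set where $u_0$ is strictly Alexandrov-convex has Lebesgue measure zero, so $\det D^2u_0=0$ almost everywhere on $\Omega$.

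The final step is to upgrade the pointwise degeneracy to the Alexandrov identity for the Monge--Amp\`ere measure. Here I would use the comparison $w\geq u_0$ together with the maximality of $w$ among convex functions matching $u_0$ on $\partial\Omega$: any residual singular Monge--Amp\`ere mass of $u_0$ (such as a kink or a cone point in the graph) would leave room, for some linear $\ell$ with $\ell\leq u_0$ on $\partial\Omega$, to replace $u_0$ on a neighborhood of a touching set by $\max(u_0,\ell)$, producing a strictly smaller value of $\mathcal{F}$ once the construction is arranged so that the difference concentrates in a region of favorable sign of $\mu$. By the uniqueness of the Alexandrov solution (Lemma~\ref{hma1}), this forces $u_0\equiv w$, completing the proof.

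The main obstacle I foresee is the sign-indefiniteness of $\mu$, which prevents a direct one-sided comparison between $u_0$ and $w$ from producing a contradiction. The argument must couple (i) the global convex interpolation toward $w$, (ii) smooth local perturbations on the strict-convexity set (leveraging (P3) crucially), and (iii) the maximality of the HMA solution in Lemma~\ref{hma1}; the most delicate technical point is the passage from $\det D^2u_0=0$ almost everywhere to the vanishing of the full Monge--Amp\`ere measure of $u_0$ in the Alexandrov sense.
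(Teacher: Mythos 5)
Your overall strategy --- produce an HMA solution via Lemma \ref{hma1} and force $u_0$ to agree with it by minimality --- is in the right spirit, but two of your three steps have genuine gaps, and the object $w$ you build from the boundary data alone is not the right comparison function.

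First, the local two-sided perturbation step fails. Pointwise Alexandrov second differentiability with $D^2u_0(x_0)>0$ does \emph{not} imply that strict convexity ``survives'' on a neighborhood in the sense you need: it does not give $D^2u_0\geq c\,I$ as a measure near $x_0$, so $u_0-s\phi$ can fail to be convex for every $s>0$ (already in one dimension, take $u_0''=2\cdot\mathbf{1}_E$ with $E$ of full density at $x_0$ but with complement of positive measure in every neighborhood). This is exactly the well-known obstruction for variational problems with a convexity constraint (cf.\ the references to Carlier and Lions in the paper): two-sided admissible perturbations are generically unavailable, so there is no pointwise Euler--Lagrange identity of the form $\int\mathcal B'(u_0)\phi\,\mu=0$. (As a side point, (P4) only makes $\mathcal B$ continuous and strictly increasing, so $\mathcal B'$ need not exist.) Second, your closing step --- upgrading ``$\det D^2u_0=0$ a.e.'' to the vanishing of the full Monge--Amp\`ere measure by replacing $u_0$ with $\max(u_0,\ell)$ ``arranged so that the difference concentrates in a region of favorable sign of $\mu$'' --- is precisely the crux and is not carried out. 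A replacement $u_0\mapsto\max(u_0,\ell)$ raises $u_0$, which lowers $\mathcal F$ only where $\mu<0$; nothing prevents the set $\{\ell>u_0\}$ from meeting $\{\mu>0\}$, where the same replacement raises $\mathcal F$. You identify this sign-indefiniteness as the main obstacle but do not overcome it.

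The paper resolves both issues by adapting the envelopes to the decomposition $\Omega=\Omega_+\cup\Omega_-$ rather than to $\partial\Omega$. It introduces $u_+$, the supremum of affine functions lying below $u_0$ on $\Omega_+\cup\partial\Omega$: this modifies $u_0$ only by raising it inside $\Omega_-$, where $\mu<0$, so by (P2) and (P4) it strictly decreases $\mathcal F$ unless $u_0=u_+$; and $u_+$ solves HMA on $\Omega_-$ by Lemma \ref{hma1}. Symmetrically, $u_-$, the supremum of supporting planes of $u_0$ taken at points of $\Omega_-$, only lowers $u_0$ on $\Omega_+$ and on $\partial\Omega$, where $\mu\geq0$ and $\sigma\geq0$, so again minimality forces $u_0=u_-$. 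Since every supporting plane of $u_-$ is (after the approximation $u_-^\epsilon$) a supporting plane of $u_+$ at a point of $\Omega_-$, and $\mathrm{MA}[u_+](\Omega_-)=0$, the Monge--Amp\`ere measure of $u_0$ vanishes on all of $\Omega$ --- this is how the Alexandrov-sense conclusion is reached without ever invoking a pointwise Hessian. I would suggest replacing your boundary envelope $w$ and the two-sided variation by these two one-sided, sign-adapted envelopes.
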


\begin{proof}
Put $\Omega_+:=\Omega\setminus\Omega_-$. Then both $\Omega_+$ and $\Omega_-$ have Lipschitz boundary. Consider
$$u_+(x):=\sup\{\ell(x)| \ell~\text{is affine and}~\ell|_{\Omega_+\cup\partial\Omega}\leq u_0|_{\Omega_+\cup\partial\Omega}\}.$$
Clearly $u_+\in\mathcal C(\Omega)$. By Lemma \ref{hma1}, $u_+$ satisfies the Homogenous Monge-Amp\`ere equation \eqref{HMA-eq-int-Omega}
on $\Omega_-$ and
\begin{align*}
u_+&\geq u_0~\text{\ on $\Omega_-$},\\
u_+&=u_0~\text{\ on $\Omega_+\cup\partial\Omega$}.
\end{align*}
We claim that $u_0=u_+$ on $\Omega^*$. Otherwise, there is a point $x_0\in\Omega_-$ so that $u_0<u_+$ in a neighborhood $U$ of $x_0$. By (P2), $U$ can be chosen sufficiently small such that $\mu(x)$ has strictly negative upper bound on $U$. Combining with (P4),
\begin{align*}
\mathcal F(u_+)=&\mathcal F(u_0)-\int_{\Omega_-\setminus U}(\mathcal B(u_0)-\mathcal B(u_+))\mu(x)\,dx\\
&-\int_{U}(\mathcal B(u_0)-\mathcal B(u_+))\mu(x)\,dx<0.
\end{align*}
A contradiction to the fact that $u_0$ is a minimizer.

Next, let
\begin{align}\label{u-}
u_-(x):=\sup\{\ell(x)|\ell~\text{is a support function of}~u_0|_{\Omega_-}\}.
\end{align}
Then
\begin{align*}
u_-&\leq u_0~\text{on $\Omega_+$},\\
u_-&=u_0~\text{on $\Omega_-$}.
\end{align*}
By (P4) and the second inequality $u_-\in\mathcal C(\Omega)$. We claim that $u_0=u_-$ on $\Omega^*$. Otherwise, there is a point $x_0\in\Omega_+$ so that $u_0>u_+$ in a neighborhood $U$ of $x_0$. By (P3), $U$ can be chosen sufficiently small such that $\mu(x)$ has strictly positive lower bound on $U$. Combining with (P4),
\begin{align*}
\mathcal F(u_-)=&\mathcal F(u_0)-\int_{\Omega_+}(\mathcal B(u_0)-\mathcal B(u_-))\mu(x)\,dx-\int_{\overline{\Omega_+}\cap\partial \Omega}(\mathcal A(u_0)-\mathcal A(u_-))\sigma\,d\sigma\\
\leq &-\int_{U}(\mathcal B(u_0)-\mathcal B(u_+))\mu(x)\,dx<0.
\end{align*}
Again this contradicts to the fact that $u_0$ is a minimizer.

It remains to show that $u_0$ satisfies \eqref{HMA-eq-int-Omega}.
Note that the function $u_-$
in \eqref{u-} can be approximated by
\begin{equation}\label{unege}
 u_-^\epsilon(x)=\sup\{\ell(x)| \ell\
 \text{is a supporting function of $u$ at some point $x\in
 \Omega_-^\epsilon$}\},
 \end{equation}
where $\Omega_-^\epsilon=\{x\in \Omega| \Phi(x)\leq-\epsilon\}\subset \Omega_-$. By \eqref{u-}, a
supporting plane of $u_-^\epsilon$ at some point in $\Omega$ must also be a
supporting plane of $u_+$ at some point in $\Omega_-^\epsilon$. But since $u_+$
is a generalized solution of \eqref{HMA-eq-int-Omega} in $\Omega_-$, we have the Monge-Amp\`ere measure
$\text{MA}[u_+](\Omega^\epsilon_-)=0$ by definition. Hence $\text{MA}[u_-^\epsilon](\Omega)=0$ and so
$u_-^\epsilon$ is a generalized solution to \eqref{HMA-eq-int-Omega} in $\Omega$. By the weak convergence of the Monge-Amp\`ere measure, $u_0=\displaystyle\lim_{\epsilon\to 0}u_-^\epsilon$ is a generalized solution of \eqref{HMA-eq-int-Omega} in $\Omega$.
\end{proof}

In the later sections,
we will use some properties related to the real Monge-Amp\`ere equation,
which can be found in \cite{Gutierrez, TrW, Wang-Zhou}.
 Let $\Omega$ be a bounded convex domain in
$\mathbb R^n$, $n\ge 2$. A boundary point $z\in\partial\Omega$ is an {\it extreme point} of
$\Omega$ if there is a hyperplane $H$ such that $\{z\}=H\cap\partial\Omega$, namely
$z$ is the unique point in $H\cap\partial\Omega$. It is known that any
interior point of $\Omega$ can be expressed as a linear combination of
extreme points of $\Omega$. If $\Omega$ is a convex polytope, a boundary
point $z\in\partial\Omega$ is an extreme point if and only if it is a vertex
of $\Omega$.

Let $u$ be a convex function on $\Omega$. For any
interior point $z\in\Omega$, let $L_z=\{x\in\mathbb R^n |x_{n+1}=\phi(x)\}$ be
a supporting plane of $u$ at $z$. By convexity, the set $\mathcal
T:=\{x\in\Omega:\ u(x)=\phi(x)\}$ is convex.

\begin{lem}{\cite[Lemma 4.1]{Wang-Zhou}}\label{extreme}
Let $u$ be a generalized solution to \eqref{HMA-eq-int-Omega}.
Then an extreme point of $\mathcal T$ must be a
boundary point of $\Omega$.
\end{lem}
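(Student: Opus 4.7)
I would argue by contradiction. Suppose $p \in \mathcal T$ is an extreme point that lies in the interior of $\Omega$. The plan is to exhibit an open neighborhood $\Omega' \Subset \Omega$ of $p$ whose subgradient image $\partial u(\Omega')$ has strictly positive Lebesgue measure; this would contradict the fact that, as a generalized solution of $\det(\partial^2 u) = 0$, the Monge--Amp\`ere measure of $u$ satisfies $\mathrm{MA}[u](\Omega') = 0$.

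Extremality of $p$ in the closed convex set $\mathcal T$ produces an $n$-dimensional open cone
\[
C := \bigl\{v \in \mathbb R^n \ : \ v \cdot (y - p) < 0 \ \text{for every } y \in \mathcal T \setminus \{p\}\bigr\}
\]
of ``strictly supporting'' directions: if $\mathcal T$ has dimension $k$ and $N_p \subset \mathrm{aff}(\mathcal T) - p$ denotes the relative interior of its normal cone at $p$, then $C$ contains $N_p$ plus the orthogonal complement of $\mathrm{aff}(\mathcal T)$, hence has dimension $n$. For each $v \in C$ and each $t > 0$, I tilt the supporting plane:
\[
\phi_{t,v}(y) := \phi(y) + t\, v \cdot (y - p) - c_{t,v}, \qquad c_{t,v} := \sup_{y \in \overline\Omega}\bigl(\phi(y) + t\, v \cdot (y - p) - u(y)\bigr).
\]
Then $\phi_{t,v} \le u$ on $\overline\Omega$ with equality at some $y_{t,v} \in \overline\Omega$, so $\nabla \phi + tv \in \partial u(y_{t,v})$.

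The heart of the argument is the claim that $y_{t,v} \to p$ as $t \to 0^+$, uniformly for $v$ in any compact subset $K \subset C$. For any subsequential limit $y_*$, comparing $\phi_{t,v}(y_{t,v}) - u(y_{t,v})$ with $\phi_{t,v}(p) - u(p)$ and passing to the limit yields both $\phi(y_*) = u(y_*)$ (so $y_* \in \mathcal T$) and $v \cdot (y_* - p) \ge 0$, which by the defining property of $C$ forces $y_* = p$. Uniformity in $v \in K$ then follows from a routine compactness argument. Fixing $\Omega' \Subset \Omega$ containing $p$ and a compact $K \subset C$, for all sufficiently small $t$ one has $y_{t,v} \in \Omega'$ for every $v \in K$, and hence
\[
\partial u(\Omega') \supset \bigl\{\nabla \phi + t v \ :\ v \in K, \ 0 < t < \varepsilon\bigr\}.
\]
The right-hand side has positive $n$-dimensional Lebesgue measure, contradicting $\mathrm{MA}[u](\Omega') = 0$.

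The main obstacle is verifying that the cone $C$ is genuinely $n$-dimensional (rather than degenerate, for instance when $\mathcal T$ has low dimension) and establishing the uniform convergence $y_{t,v} \to p$. Both rely critically on extremality of $p$ in $\mathcal T$, which is precisely what provides the strict linear separation of $p$ from the rest of $\mathcal T$ in all directions of $C$.
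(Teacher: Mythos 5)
There is a genuine gap at the step you yourself single out: the cone $C$ of strictly separating directions at $p$ need not be $n$-dimensional, and your justification of its full dimension does not hold. The normal cone of $\mathcal T$ at an exposed point need not have nonempty interior inside $\mathrm{aff}(\mathcal T)$; your dimension count ``relative interior of $N_p$ plus $\mathrm{aff}(\mathcal T)^{\perp}$'' only gives $\dim C\ge n-\dim\mathcal T+1$, with equality in general. Concretely, in the contradiction scenario take $n=2$, $p=O$, and suppose $\mathcal T$ coincides near $p$ with $\{(x_1,x_2):x_2\ge x_1^2\}$; then $p$ is an extreme point in the sense of the paper (the line $\{x_2=0\}$ meets $\mathcal T$ only at $p$), yet $C=\{(0,v_2):v_2<0\}$ is a single ray, because for any $v$ with $v_1\neq0$ the points $(x_1,x_1^2)\in\mathcal T$ with $x_1$ small and of the sign of $v_1$ give $v\cdot(y-p)>0$. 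Your convergence argument $y_{t,v}\to p$ for $v\in C$ is fine, but the resulting family of subgradients $\{\nabla\phi+tv: v\in K,\ 0<t<\varepsilon\}$ then lies in a line and has zero Lebesgue measure, so it produces no contradiction with $\mathrm{MA}[u](\Omega')=0$. Nothing in the hypothesis ``$p$ is an extreme point of $\mathcal T$ interior to $\Omega$'' rules out such a configuration, so the proof does not close.

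The argument in \cite{Wang-Zhou} that the paper cites needs only \emph{one} strictly separating direction and replaces the measure estimate by the uniqueness statement of Lemma \ref{hma1}. Normalizing $\phi\equiv0$ and choosing coordinates so that $p=O$ and $\mathcal T\setminus\{p\}\subset\{x_n<0\}$, one considers the component $\Omega_\delta$ of $\{x\in\Omega: u(x)<\delta x_n+\delta^2\}$ containing $p$. Since $u\ge0$ forces $\Omega_\delta\subset\{x_n>-\delta\}$, and any limit of points of $\Omega_{\delta_k}$ as $\delta_k\to0$ lies in $\{u=0\}\cap\{x_n\ge0\}=\{p\}$, the sets $\Omega_\delta$ shrink to $p$ and hence are compactly contained in $\Omega$ for small $\delta$. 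On $\Omega_\delta$ both $u$ and the affine function $\delta x_n+\delta^2$ are generalized solutions of \eqref{HMA-eq-int-Omega} with the same boundary values, so Lemma \ref{hma1} gives $u(p)=\delta^2>0$, contradicting $u(p)=0$. If you want to keep your normal-mapping framework, you would need a perturbation scheme that exploits a single separating direction rather than an open cone of them; as written, the dimensionality claim for $C$ is the point that fails.
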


The above results is essentially used in \cite{Wang-Zhou} to discuss the K-stability of toric varieties.
Now we use it for general spherical varieties.
Let $\Omega=P$ be the $\mathcal W$-invariant polytope in the last section and
we investigate  the functional $\mathcal L_X(\cdot)$ given by \eqref{L(u)} on the following set of convex functions.
$$\mathcal C_{1,\mathcal W}=\{u: P^*\to\mathbb R|u~\text{is $\mathcal W$-dominate convex, continuous in $P^*$ and}~\int_{\partial P_+}u\pi \,d\sigma<+\infty\}.$$
Here $P^*$ is the union of the interior of $P$ and the interior of facets. The following Lemma shows that $\mathcal L_X(\cdot)$ satisfies the condition (P5):
\begin{lem}
$\mathcal L_X(\cdot)$ is well-defined on $\mathcal C_{1,\mathcal W}$.
\end{lem}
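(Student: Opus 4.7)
The goal is to verify that each of the three integrals in the expression \eqref{L(u)} for $\mathcal{L}_X(u)$ is finite for $u\in\mathcal{C}_{1,\mathcal{W}}$.

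First I would establish a uniform lower bound $u\geq m$ on $P$. By Proposition \ref{dominate-convex-function}, $\mathcal{W}$-dominance extends $u$ to a convex function on all of $P$. Choosing an interior point $y_0\in\text{RelInt}(\mathcal{V}_z\cap P)\subset P^*$ at which $u$ is finite (by continuity on $P^*$) and a subgradient $\xi\in\partial u(y_0)$, the affine function $\ell(y)=u(y_0)+\langle\xi,y-y_0\rangle$ minorizes $u$ on $P$ and attains a finite minimum $m$ on the bounded set $P$. In particular $u_-:=\max(-u,0)\leq|m|$ is uniformly bounded.

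The boundary term is then easily controlled: after the normalization $O\in\text{RelInt}(\mathcal{V}_z\cap P)$, every outer facet $F_A$ has $\lambda_A>0$ (since $\pi\not\equiv0$ on outer facets), and $\langle y,\nu_A\rangle$ and $\pi$ are bounded on $F_A$ with $d\sigma_0=\lambda_A\,d\sigma$, so
$$\sum_{A=1}^{d_+}\frac{1}{\lambda_A}\Bigl|\int_{F_A\cap P_+}u\langle y,\nu_A\rangle\pi\,d\sigma_0\Bigr|\leq C\int_{\partial P_+}(u-m)\pi\,d\sigma+C|m|\int_{\partial P_+}\pi\,d\sigma,$$
which is finite by the defining hypothesis of $\mathcal{C}_{1,\mathcal{W}}$ combined with the lower bound on $u$.

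For the two interior terms, since $(\bar S+\theta_X)\pi$ and $\langle 2\rho,\nabla\pi\rangle$ are bounded polynomials on $P_+$, both are dominated by $C\int_{P_+}|u|\,dy$; with $u_-\leq|m|$ it suffices to show $\int_{P_+}u_+\,dy<+\infty$. I would use a trace-type estimate based on convexity: with $y=y_0+s\omega$ in polar coordinates centered at $y_0$ and exit time $r(\omega)$, one has $u(y)\leq(1-s/r(\omega))u(y_0)+(s/r(\omega))\,u(y_0+r(\omega)\omega)$; integrating $u_+$ and converting $s^{n-1}ds\,d\omega$ to surface measure on $\partial P$ yields $\int_P u_+\,dy\leq C+C'\int_{\partial P}u_+\,d\sigma_0$.

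The main obstacle is bridging the gap between this unweighted boundary integral and the weighted hypothesis $\int_{\partial P_+}u\pi\,d\sigma<+\infty$, since $\pi$ degenerates on intersections of outer facets with Weyl walls. To close this gap I would exploit $\mathcal{W}$-dominance more fully: by Proposition \ref{dominate-convex-function}, $\langle\alpha,\nabla u\rangle\geq0$ on $P_+$ for every $\alpha\in\Phi_+$, so $u$ is nondecreasing as one moves away from any Weyl wall. This monotonicity bounds $u_+$ at points where $\pi$ degenerates by its values at nearby relative interior points of an outer facet where $\pi$ is strictly positive, delivering the required finiteness.
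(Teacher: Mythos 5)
Your plan diverges from the paper's proof at the point where you discard the weights, and that step opens a genuine gap: the reduction ``both \[interior terms\] are dominated by $C\int_{P_+}|u|\,dy$; \dots it suffices to show $\int_{P_+}u_+\,dy<+\infty$'' asks you to prove something that is false on $\mathcal C_{1,\mathcal W}$, and the monotonicity argument you propose at the end cannot repair it. Concretely, take rank two with a single positive spherical root $\alpha$ and $\pi=c\,\langle\alpha,y\rangle^2$ (e.g.\ a $GL_2(\mathbb C)$-compactification), and suppose $P$ has a vertex $v\in W_\alpha$ at which two distinct outer facets $F_A$ and $s_\alpha(F_A)$ meet (such hexagonal $P$ occur and are exactly the pictures drawn in Section \ref{r2}). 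Let $\ell(y):=-\langle\nu_A+s_\alpha\nu_A,\,y-v\rangle$; then $\ell$ is $\mathcal W$-invariant, $\nabla\ell\perp\alpha$, $\ell(v)=0$ and $\ell>0$ on $P\setminus\{v\}$. Set $u=\ell^{-2}$. This $u$ is convex, $\mathcal W$-invariant (hence dominate, with $\langle\alpha,\nabla u\rangle\equiv0$), continuous on $P^*$, and along $F_A$ near $v$ one has $\ell\sim c_1t$ and $\pi\sim c_2t^2$ in the arclength parameter $t$ from $v$, so $u\pi$ is bounded on $\partial P_+$ and $\int_{\partial P_+}u\pi\,d\sigma<+\infty$; thus $u\in\mathcal C_{1,\mathcal W}$. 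Yet $\int_{P_+}u\,dy\sim\int_0^\epsilon t^{-2}\cdot t\,dt=+\infty$. Your observation that $\langle\alpha,\nabla u\rangle\geq0$ controls blow-up \emph{transversal} to a Weyl wall, but the dangerous blow-up happens \emph{at a point of the wall approached along the wall or along a facet} (precisely where $\pi$ degenerates on $\partial P_+$), and there the monotonicity gives nothing, as the example shows.

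The repair is not to estimate the size of the weights but to exploit their sign near the degenerate set, which is what the paper does. After normalizing $u\geq u(O)=0$ by subtracting a central affine function (your affine minorant serves the same purpose), one combines the two interior terms into the single integral $-\int_{P_+}u\,\Theta\,\pi\,dy$ with $\Theta$ as in \eqref{theta}; since $\Theta\to-\infty$ as $y$ approaches $\{\pi=0\}$, the integrand $-u\Theta\pi$ is nonnegative on a conical neighborhood $K$ of the degenerate set, so that region can only contribute nonnegatively (possibly $+\infty$, which is harmless: the lemma only needs $\mathcal L_X(u)>-\infty$, i.e.\ well-definedness as a value in $(-\infty,+\infty]$, not finiteness of each term). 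Outside $K$ both $\pi$ and $\Theta$ are bounded and $\pi$ is bounded below, so there your trace estimate applies and is genuinely controlled by the hypothesis $\int_{\partial P_+}u\pi\,d\sigma<+\infty$. Your treatment of the boundary term and the affine lower bound are fine and match the paper in spirit; the interior estimate must be reorganized along these lines.
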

\begin{proof}
It suffices to check that $\mathcal L_X(u)>-\infty$ for any $u\in\mathcal C_{1,\mathcal W}$. Note that $O\in\text{RelInt}(P_+\cap\mathcal V_z)$ is a fixed point of the $\mathcal W$-action. By convexity and $\mathcal W$-invariance of $u$, up to adding a central affine function $\phi$, one can assume that
\begin{align}\label{normalize-1}
u\geq u(O)=0.
\end{align}
Since $\mathcal L_X(\phi)$ is finite for any affine $\phi$, it remains to show that $\mathcal L_X(u)>-\infty$ for any $u\in\mathcal C_{1,\mathcal W}$ satisfies \eqref{normalize-1}.

Recall \eqref{L(u)}. For simplicity, set
\begin{align}\label{theta}
\Theta(y):=\bar S+\theta_X-2\sum_{\alpha\in\Phi_+^o}\frac{\langle\alpha,\rho\rangle}{\langle\alpha,y\rangle},~y\in P_+.
\end{align}
Then
\begin{align*}
\mathcal L_X(u)=\sum_{A=1}^{d_+}\frac1{\lambda_A}\int_{F_A\cap P_+}u\langle y,\nu_A\rangle\pi \,d\sigma_0-\int_{P_+}u\Theta\pi \,dy,~\text{for}~u\in\mathcal C_{1,\mathcal W}.
\end{align*}
Since $\langle\alpha,\rho\rangle>0$ for any $\alpha\in\Phi_+^o$ and $\langle\alpha,y\rangle>0$ for any $y\in P_+$, there is a compact set $K\subset P_+$ that contains the union of all facets of $P_+$ where $\pi(y)=0$ so that
$$\Theta(y)<0,~y\in K.$$
Hence, whenever $u\geq0$,
$$\sum_{A=1}^{d_+}\frac1{\lambda_A}\int_{F_A\cap K} u\langle y,\nu_A\rangle\pi \,d\sigma_0-\int_{K}u\Theta\pi \,dy\geq0.$$

On the other hand, it is easy to see that $\Theta$ is bounded outside $K$. Hence by the normalization condition \eqref{normalize-1},
$$\sum_{A=1}^{d_+}\frac1{\lambda_A}\int_{F_A\setminus K} u\langle y,\nu_A\rangle\pi \,d\sigma_0-\int_{P_+\setminus K}u\Theta\pi \,dy\geq-C_{0}\int_{\partial P_+}u\pi d\sigma_0>-\infty,$$
for some uniform constant $C_{0}>0$. Hence we get the lemma.
\end{proof}

In the following we prove Theorem \ref{Main-thm1} (1).

\begin{prop}\label{HMA-eq-lem}
Suppose that $\mathcal L_X(\cdot)\geq0$ for any function in $\mathcal C_{1,\mathcal W}$ and there is $u_0\in\mathcal C_{1,\mathcal W}$ so that $\mathcal L_X(u_0)=0$. Then  $u_0$ must be a generalized solution to the homogenous Monge-Amp\`ere equation on the whole polytope $P$ in the sense of Alexandrov.
\end{prop}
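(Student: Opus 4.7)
The plan is to apply Proposition \ref{HMA-prop} with $\Omega=P$, after rewriting $\mathcal L_X(u)$ as a functional over the whole polytope via $\mathcal W$-invariance. A direct computation using $\nabla\pi=\pi\sum_{\alpha\in\Phi_+^o}\alpha/\langle\alpha,y\rangle$ on $P_+^\circ$ gives $\langle 2\rho,\nabla\pi\rangle=2\sum_{\alpha\in\Phi_+^o}\langle\alpha,\rho\rangle\prod_{\beta\neq\alpha}\langle\beta,y\rangle$, so the two interior terms of \eqref{L(u)} combine into $-\int_{P_+}u\,\Theta\pi\,dy$ with $\Theta$ as in \eqref{theta}. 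Although $\Theta$ has simple poles on the Weyl walls, the product $\Theta\pi$ extends continuously and $\mathcal W$-invariantly to $P$. Since $u$, $\pi$, $\Theta\pi$, and the outer-facet density $\langle y,\nu_A\rangle\pi/\lambda_A$ are all $\mathcal W$-invariantly defined, we obtain
\begin{equation*}
|\mathcal W|\,\mathcal L_X(u)=\int_{\partial P}u\,\tilde\sigma\,d\sigma_0-\int_{P}u\,\Theta\pi\,dy,
\end{equation*}
where $\tilde\sigma\geq 0$ on $\partial P$.

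Next I would verify hypotheses (P1)-(P5) of Proposition \ref{HMA-prop} with $\mathcal A(t)=\mathcal B(t)=t$ and $\mu=-\Theta\pi$. (P1) follows from $\pi\geq 0$ and the normalization $O\in\text{RelInt}(P\cap\mathcal V_z)$ (so $\langle y,\nu_A\rangle\geq 0$); (P3) holds since $-\Theta\pi$ is real-analytic on each open Weyl chamber of $P$; (P4) is trivial; and (P5) is the preceding lemma. For (P2), a direct computation shows that as $y$ approaches any Weyl wall $\{\langle\alpha_0,y\rangle=0\}$ inside $P$, only the $\alpha=\alpha_0$ term survives in $2\sum_\alpha\langle\alpha,\rho\rangle\prod_{\beta\neq\alpha}\langle\beta,y\rangle$, giving
\begin{equation*}
-\Theta\pi\;\longrightarrow\;2\langle\alpha_0,\rho\rangle\prod_{\beta\in\Phi_+^o,\,\beta\neq\alpha_0}\langle\beta,y\rangle\;>\;0,
\end{equation*}
so $\mu>0$ on a neighborhood of every Weyl wall. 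For the opposite sign, if $-\Theta\pi\geq 0$ on all of $P$, then after replacing $u_0$ by $u_0-u_0(O)\geq 0$ (which does not alter $\mathcal L_X(u_0)=0$ since constants lie in the kernel of $\mathcal L_X$), the identity $\mathcal L_X(u_0)=0$ forces $u_0\equiv 0$ on the full-measure open set $\{\Theta<0\}\cap\{\pi>0\}$, so by convexity $u_0$ is a central affine function, for which the homogeneous Monge-Amp\`ere equation is trivial. We may therefore assume $\Omega_-:=\{\mu<0\}$ is nonempty.

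With (P1)-(P5) in force, Proposition \ref{HMA-prop} yields that the $\mathcal W$-invariant extension of $u_0$ to $P$ is a generalized solution of $\det(\partial^2u)=0$ on $P$ in the sense of Alexandrov, which is the desired conclusion. The main technical point to address is that $\mu=-\Theta\pi$, while continuous and $\mathcal W$-invariant, is only piecewise smooth across the Weyl walls and hence fails a strict reading of the smoothness requirement in (P2) on $\overline P$. However, inspection of the proof of Proposition \ref{HMA-prop} reveals that only the continuity of $\mu$ and its strict-sign condition on small open neighborhoods are actually invoked in the envelope constructions of $u_\pm$, while the Lipschitz regularity of $\partial\Omega_\pm$ follows from the piecewise real-analyticity of $\mu$. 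Hence the argument carries through without modification.
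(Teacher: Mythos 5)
Your argument is correct and follows essentially the same route as the paper: rewrite $\mathcal L_X$ over the $\mathcal W$-invariant polytope $P$ with interior density $\mu=-\Theta\pi$ and apply Proposition \ref{HMA-prop} with $\mathcal A=\mathcal B=\mathrm{id}$. The only substantive differences are that the paper obtains nonemptiness of $\{\mu<0\}$ directly from $\mathcal L_X(1)=0$, which forces $\int_{P}\Theta\pi\,dy>0$ (rather than disposing of a degenerate case as you do, where incidentally one should subtract a central affine supporting function at $O$ rather than the constant $u_0(O)$ to get $u_0\geq 0$), and it secures the Lipschitz-boundary requirement in (P2) by observing that $\Theta$ is concave on each open Weyl chamber, so the components of $\{\Theta>0\}$ are convex.
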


\begin{proof}
Recall \eqref{theta}. By Lemma \ref{pi-extende-lem}, $\Theta$ can be extended to a $\mathcal W$-invariant function
$$\Theta(y)=\bar S+\theta_X-2\sum_{\alpha\in\Phi_+^o}\frac{\langle\alpha,\rho\rangle}{|\langle\alpha,y\rangle|},~y\in P.$$
By $\mathcal W$-invariance, up to dividing a constant $\#\mathcal W$, the functional $\mathcal L_X(\cdot)$ can be rewritten as
\begin{align}\label{L-Theta}
\mathcal L_X(u)=\sum_{A=1}^{d_0}\frac1{\lambda_A}\int_{F_A}u\langle y,\nu_A\rangle\pi \,d\sigma_0-\int_{P}u\Theta\pi \,dy,~\forall u\in\mathcal C_{1,\mathcal W}.
\end{align}

From the condition that $\mathcal L_X(1)=0$, we get
$$\int_{P}\Theta\pi \,dy=\sum_{A=1}^{d_0}\frac1{\lambda_A}\int_{F_A}\langle y,\nu_A\rangle\pi \,d\sigma_0>0.$$
Hence $\mathfrak A:=\{y\ |\Theta(y)\geq0\}$ is a $\mathcal W$-invariant non-empty set. Note that by \cite[Lemma 4.1]{Knop14}, when $y\in P_+$ goes to a facet of $\mathcal V_+$ (i.e. a Weyl wall with respect to some spherical root), there is at least one $\alpha\in \Phi_+^o$ so that $\langle\alpha,y\rangle\to 0^+$. Consequently, $\Theta\to-\infty$. Thus each component of $\mathfrak A$ lies in the interior of exactly one Weyl chamber of $\mathcal W$. By taking $\mu=-\Theta(y)\pi(y)$, $\mathcal L_X(\cdot)$ satisfies the conditions (P2)-(P3) above. It is also easy to see that each component is convex. Taking in addition $\mathcal A(u)=\mathcal B(u)=u$ in Proposition \ref{HMA-prop}, $u_0$ satisfies the homogenous Monge-Amp\`ere equation  on $P$. Hence we get the Proposition.
\end{proof}

\vskip 20pt


\section{A criterion of strict K-semistability when rank$(G/H)=2$}\label{r2}

Throughout this section, we assume that rank$(G/H)=2$.  We further assume that $P_+=P\cap\mathcal V_+$ for a $\mathcal W$-invariant convex polytope $P\subset\mathfrak M_\mathbb R\simeq\mathbb R^2$.
Following \cite{Do},
we call a function $u$ is {\it simple piecewise linear} if there is a linear function
$\ell$ such that $u=\max\{0,\ell\}$.
If $u$ is simple piecewise linear, the set  $\mathfrak I_{u}=P\cap \{\ell=0\}$
is called the {\it crease} of $u$. We establish a simpler criterion of strict relative K-semistability, which corresponds to Theorem \ref{Main-thm2} (2).

\begin{prop}\label{ker-thm}
If $\mathcal L_X(\cdot)\geq0$ for any function in $\mathcal C_{1,\mathcal W}$ and there is a non-central affine function $u_0\in\mathcal C_{1,\mathcal W}$ so that $\mathcal L_X(u_0)=0$, then there exists a $\mathcal W$-dominate simple piecewise linear function $\bar u\in\mathcal C_{1,\mathcal W}$, which is not a central affine function, so that $\mathcal L_X(\bar u)=0$.
\end{prop}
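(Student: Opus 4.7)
The plan is to exploit the two-dimensional Monge--Amp\`ere structure given by Theorem~\ref{Main-thm1}(1) to extract a crease line of the $\mathcal W$-invariant extension $\tilde u_0$, convert it into a simple piecewise linear function $\bar u$, and then show $\mathcal L_X(\bar u)=0$ by a one-sided Hessian perturbation.

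First I reduce to the genuinely non-affine case. If $u_0$ is itself an affine function, then ``non-central'' forces $\nabla u_0=a\in\mathcal V_+\setminus\mathcal V_z$, and for any sufficiently large $C>0$ the function $\bar u:=u_0+C=\max\{0,\,u_0+C\}$ is a (degenerate) simple piecewise linear function in $\mathcal C_{1,\mathcal W}$, not a central affine function, with $\mathcal L_X(\bar u)=\mathcal L_X(u_0)+C\,\mathcal L_X(1)=0$ by K-semistability. So henceforth assume $\tilde u_0$ is non-affine on $P\subset\mathbb R^2$. By Proposition~\ref{HMA-eq-lem} it is an Aleksandrov solution of $\det D^2\tilde u_0=0$ on $P$. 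Pick $p\in\mathrm{int}(P_+)$ where $\tilde u_0$ is not affine in a neighborhood, let $\ell_p$ be a supporting affine function at $p$, and form $\mathcal T=\{\tilde u_0=\ell_p\}\cap P$. Lemma~\ref{extreme} forces the extreme points of $\mathcal T$ onto $\partial P$, so in dimension two $\mathcal T$ contains a line segment $I$ with both endpoints on $\partial P$ along which $\tilde u_0=\ell_p$. By $\mathcal W$-invariance, each $w(I)$, $w\in\mathcal W$, is also a crease of $\tilde u_0$. We may arrange that the line supporting $I$ is not a Weyl wall and that $I\cap\mathrm{int}(\mathcal V_+)\ne\emptyset$; the remaining possibility, in which $\tilde u_0$ is smooth with rank-one Hessian inside every chamber and hence locally of the form $f(\langle a,\cdot\rangle)$ with $a\in\mathcal V_+$, is handled separately by applying $\mathcal L_X$ to the Stieltjes decomposition $f(t)=f(t_{\min})+f'(t_{\min})(t-t_{\min})+\int_{t_{\min}}^{t_{\max}}\max\{0,t-s\}f''(s)\,ds$ (all integrands lie in $\mathcal C_{1,\mathcal W}$ since $u_0\in\mathcal C_{1,\mathcal W}$ forces $f'\ge0$), whereupon non-negativity on each wedge forces $\mathcal L_X(\max\{0,\langle a,\cdot\rangle-s\})=0$ for $s$ in the support of $f''$.

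In the main case, let $\bar\ell$ be the affine function with $\{\bar\ell=0\}$ equal to the line through $I$, oriented so that $\nabla\bar\ell\in\mathcal V_+$ (possible since the line is not a Weyl wall); set $\bar u:=\max\{0,\bar\ell\}$ on $P_+$. Its sub-differential takes only the values $0$ and $\nabla\bar\ell$, both dominate, so Proposition~\ref{dominate-convex-function} yields $\bar u\in\mathcal C_{1,\mathcal W}$; $\bar u$ is simple piecewise linear with non-trivial crease $I\cap P_+$, hence not a central affine function.

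Finally, I verify $\mathcal L_X(\bar u)=0$ via a one-sided Hessian perturbation. The $\mathcal W$-symmetric extension $\tilde u$ of $\bar u$ has distributional Hessian supported on $\bigcup_{w\in\mathcal W}w\{\bar\ell=0\}$ together with the portions of the Weyl walls lying in $\{\tilde u>0\}$. On each such locus $\tilde u_0$ itself carries a strictly positive gradient jump: on $w(I)$ by $\mathcal W$-equivariance of the crease structure, and on the Weyl walls by $\mathcal W$-symmetry of the non-constant $\tilde u_0$. Consequently, for $\epsilon>0$ sufficiently small, $u_0-\epsilon\bar u$ remains convex and $\mathcal W$-dominate and hence lies in $\mathcal C_{1,\mathcal W}$. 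Linearity of $\mathcal L_X$ and the hypothesis $\mathcal L_X\ge0$ then give
\begin{equation*}
0\le\mathcal L_X(u_0-\epsilon\bar u)=\mathcal L_X(u_0)-\epsilon\mathcal L_X(\bar u)=-\epsilon\mathcal L_X(\bar u),
\end{equation*}
so $\mathcal L_X(\bar u)\le0$; combined with the reverse inequality from $\bar u\in\mathcal C_{1,\mathcal W}$, we conclude $\mathcal L_X(\bar u)=0$. The main technical obstacle is precisely this convexity check: one needs a uniform positive lower bound on the gradient jumps of $\tilde u_0$ along every $w(I)$ and along every Weyl-wall segment in $\{\tilde u>0\}$, so that a single $\epsilon$ works simultaneously. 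This is the $\mathcal W$-equivariant generalization of the toric argument of~\cite{Wang-Zhou}.
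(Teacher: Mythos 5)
Your overall strategy (use Proposition \ref{HMA-eq-lem} and Lemma \ref{extreme} to locate a segment of the contact set with endpoints on $\partial P$, turn its supporting line into a simple piecewise linear $\bar u$, and test $u_0-\epsilon\bar u$) is the same as the paper's, but the final perturbation step has a genuine gap. You assert that $\tilde u_0$ carries a \emph{strictly positive} gradient jump along every $w(I)$ and along the Weyl walls inside $\{\tilde u>0\}$, and you concede this is ``the main technical obstacle'' without proving it. It is in fact false in general: the quantity $a_0=\inf_{z\in E}\lim_{t\to0^+}\frac1t\bigl(u_0(z+t\varpi)-u_0(z)\bigr)$ can vanish (a convex Aleksandrov solution of $\det D^2u=0$ may leave its contact set with zero initial slope), and a non-constant $\mathcal W$-invariant convex function can be differentiable across a Weyl wall (e.g.\ $\langle\alpha,y\rangle^2$ is $s_\alpha$-invariant, convex and smooth across $W_\alpha$). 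In either situation $u_0-\epsilon\bar u$ fails to be convex, respectively $\mathcal W$-dominate, for \emph{every} $\epsilon>0$, so the inequality $0\le\mathcal L_X(u_0-\epsilon\bar u)$ is unavailable. This is exactly why the paper splits into the subcases $a_0>0$ and $a_0=0$: when $a_0=0$ it replaces $u_0-\epsilon\bar u$ by the truncation $\tilde u_0^\epsilon=u_0^-+\chi_+\max\{0,u_0-\epsilon\bar u\}$, verifies dominate convexity of that function structurally (via the fact that the convex hull of $E$ and $s_\alpha(E)$ lies in $\mathcal T$, not via a lower bound on jumps), and shows $\mathcal L_X(\tilde u_0^\epsilon-u_0)=O(\epsilon\,\delta(\epsilon))=o(\epsilon)$ because the set where the truncation acts shrinks to the crease. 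Without some version of this argument your proof does not close.

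Two further points. First, you take the line through an arbitrary chord $I$ of $\mathcal T$ with endpoints on $\partial P$; if $\mathcal T$ is two-dimensional and $I$ meets its relative interior, then $u_0=\ell_p$ on both sides of $I$ locally, so $u_0-\epsilon\bar u$ acquires a \emph{concave} kink along $I$ and is not convex regardless of $a_0$ --- the line must be chosen through an \emph{edge} $E$ of $\mathcal T$, oriented so that $\mathcal T$ lies in $\{\bar\ell\le0\}$. Second, the claim that $\nabla\bar\ell$ can be taken in $\mathcal V_+$ ``since the line is not a Weyl wall'' is not a justification; a generic line need not have normal in $\pm\mathcal V_+$. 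The paper obtains the dominance of the normal $\varpi$ from the $\mathcal W$-invariance and convexity of $\mathcal T$ itself, and separately treats the cases where $E$ lies on a Weyl wall or is parallel to a root. Your Stieltjes-decomposition treatment of the ``rank-one Hessian'' subcase is a reasonable idea but would also need the affine term handled and the dominance of each slice $\max\{0,\langle a,\cdot\rangle-s\}$ checked; as written it does not substitute for the missing $a_0=0$ analysis.
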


We will need the following elementary lemma:
\begin{lem}\label{ratio-lem}
For any $\mathcal W$-dominate simple piecewise linear function $\phi$, denote by
$$\mathfrak F_+(\phi):=\partial(\text{supp}(\phi))\cap\partial P\cap\partial P_+, \ \mathfrak F_0(\phi):=\partial(\text{supp}(\phi))\cap\partial \mathcal V_+$$ the
parts lying on the outer facets of $P_+$ and
 on the Weyl walls, respectively. Then there is a constant $c>0$ which depends only on $P$ such that the $1$-dimensional Lebesgue measure
\begin{align}\label{area-compare}
|\mathfrak F_+(\phi)|_1\geq c|\mathfrak F_0(\phi)|_1,~\forall~\text{$\mathcal W$-dominate simple piecewise linear}~\phi.
\end{align}
\end{lem}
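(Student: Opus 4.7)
The plan is to argue by a compactness-and-contradiction scheme, deriving a geometric contradiction in the limit from the dominance condition on $\ell$ (Proposition~\ref{dominate-convex-function}).

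First I would set up the objects and a compactness framework. A $\mathcal W$-dominate simple piecewise linear function on $P_+$ has the form $\phi_\ell = \max\{0,\ell\}$ with $\nabla\ell \in \overline{\mathcal V_+}$; the positive region $K_\ell := \{\ell \geq 0\}\cap P_+$ is a convex sub-polygon whose boundary arc on $\partial P_+$, namely $\gamma_\ell := \{\ell\geq 0\}\cap\partial P_+$, decomposes as $\mathfrak F_+(\phi_\ell)\cup\mathfrak F_0(\phi_\ell)$. After normalising $|\nabla\ell|=1$ and restricting to those $\ell$ with $\phi_\ell$ nontrivial on $P_+$ (so that $\ell(O)$ lies in a bounded interval), the parameter space of relevant $\ell$ is compact; on it, $|\mathfrak F_0(\phi_\ell)|_1$ is upper semicontinuous and $|\mathfrak F_+(\phi_\ell)|_1$ is lower semicontinuous, the only discontinuities occurring when the crease $\{\ell=0\}$ sweeps through a vertex of $P_+$.

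Next, suppose for contradiction that no uniform $c > 0$ works, and choose a sequence $\ell_n$ with $|\mathfrak F_0(\phi_{\ell_n})|_1 > 0$ and $|\mathfrak F_+(\phi_{\ell_n})|_1 / |\mathfrak F_0(\phi_{\ell_n})|_1 \to 0$. Pass to a subsequential limit $\ell_\infty$: the above semicontinuity forces $|\mathfrak F_+(\phi_{\ell_\infty})|_1 = 0$ and $|\mathfrak F_0(\phi_{\ell_\infty})|_1 > 0$, so $\gamma_{\ell_\infty}$ lies entirely on the Weyl walls.

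The geometric contradiction then uses $\rank(G/H) = 2$ as follows. In the case $\mathcal V_z=\{O\}$, the two Weyl walls $W_1, W_2$ meet only at the apex $O$, so a connected arc of positive length entirely in $\Gamma_0 = (W_1\cup W_2)\cap P_+$ must be of the form $V_1\, O\, V_2$ with $V_i \in W_i\setminus\{O\}$, and $V_1, V_2$ are the endpoints of the crease. Choosing coordinates so that $W_1, W_2$ are the positive axes and $V_1=(a,0), V_2=(0,b)$ with $a,b>0$, the conditions $\ell_\infty(V_1)=\ell_\infty(V_2)=0$ and $\ell_\infty(O)>0$ (holding since $V_1, O, V_2$ are non-collinear and $O$ lies in the closed positive region) force $\ell_\infty(x,y) = \ell_\infty(O)(1 - x/a - y/b)$, whence $\nabla\ell_\infty = (-\ell_\infty(O)/a,\,-\ell_\infty(O)/b)$ has strictly negative components, contradicting $\nabla\ell_\infty \in \overline{\mathcal V_+}$. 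The case $\dim\mathcal V_z=1$ has a single Weyl wall, and a straight-line crease with both endpoints on that wall must coincide with it, which leads either to $\ell_\infty \equiv 0$ or to a non-degenerate configuration whose ratio is bounded by the fixed geometry of $P$.

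The main obstacle will be the bookkeeping of $|\mathfrak F_+|_1$ and $|\mathfrak F_0|_1$ across limits at which the crease passes through a vertex of $P_+$. I plan to handle this by working directly with the arc components of $\partial P_+$ rather than with naive integral limits, and by verifying that no outer-facet content of $\mathfrak F_+$ is lost and no Weyl-wall content of $\mathfrak F_0$ is gained in the limit --- precisely the semicontinuity needed to justify the degenerate limit configuration above.
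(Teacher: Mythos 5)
Your high-level strategy (compactness plus the observation that dominance forces $\nabla\ell\in\mathcal V_+$) is reasonable, and it correctly isolates the same mechanism the paper exploits, namely $\langle\nabla\ell,\varpi_i\rangle\geq 0$. However, there are two genuine gaps. First, the contradiction hypothesis $|\mathfrak F_+(\phi_{\ell_n})|_1/|\mathfrak F_0(\phi_{\ell_n})|_1\to 0$ does not force $\limsup_n|\mathfrak F_0(\phi_{\ell_n})|_1>0$: since $|\mathfrak F_0|_1$ is bounded above by $|P\cap\partial\mathcal V_+|_1$, the hypothesis is compatible with both measures tending to $0$ (the crease approaching a corner $Q_i$ where a Weyl wall exits through $\partial P$). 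In that regime upper semicontinuity gives you nothing ($|\mathfrak F_0(\phi_{\ell_\infty})|_1\geq\limsup_n|\mathfrak F_0(\phi_{\ell_n})|_1=0$ is vacuous), the limit configuration carries no information, and the contradiction never materializes. Ruling out degeneration of the ratio in exactly this corner regime is the quantitative heart of the lemma — it is where the paper's explicit angle estimates (the constants $\epsilon_i$ controlling $\langle\Lambda/|\Lambda|,\varpi_i\rangle$, and the separate treatment of $\varpi_1\perp\varpi_2$ with $c_1\ll c_2$) do their work — and your scheme defers it to a limit analysis that never sees it.

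Second, your description of the degenerate limit is incorrect. Because $\nabla\ell\in\mathcal V_+$ and $\langle\varpi_1,\varpi_2\rangle\geq0$, the function $\ell$ is non-decreasing along each ray $\mathbb R_{\geq0}\varpi_i$ going away from the origin, so $\mathrm{supp}(\phi_\ell)\cap\partial\mathcal V_+$ is on each wall an interval anchored at the \emph{far} endpoint $Q_i=(\mathbb R_{\geq0}\varpi_i)\cap\partial P$, never a corner $V_1\,O\,V_2$ at the origin with $\ell(O)>0$. So the configuration you refute cannot occur for any dominate $\ell$ in the first place, while the configurations that could actually arise as limits with $|\mathfrak F_+|_1=0<|\mathfrak F_0|_1$ — e.g.\ an arc on a single wall not containing $O$, which (since $\ell(Q_i)$ must then vanish and $\ell$ is non-decreasing toward $Q_i$) forces $\ell\equiv0$ on that wall, hence $\Lambda\perp\varpi_i$ and $\lambda=0$, hence $\mathrm{supp}(\phi_\ell)=P_+$ and $\mathfrak F_+$ equal to the whole outer boundary — are not addressed. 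Relatedly, the semicontinuity of $|\mathfrak F_+|_1$ fails exactly when the limiting crease contains an outer facet, which is another degenerate case your sketch does not treat. By contrast, the paper's proof is a short direct estimate: write $\Lambda=c_1\varpi_1+c_2\varpi_2$ with $c_1,c_2\geq0$, observe $\langle\Lambda,\varpi_i\rangle\geq0$, bound $|\mathfrak F_+|_1$ from below by $|\mathfrak F_0\cap W_{\alpha_j}|_1$ times a transversality constant whenever $\langle\Lambda/|\Lambda|,\varpi_i\rangle\geq\epsilon_i$, and handle the single degenerate direction ($\varpi_1\perp\varpi_2$, $c_1/c_2\to0$) by showing the corresponding wall contributes negligibly to $\mathfrak F_0$. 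If you want to keep the compactness framework, you would still need those quantitative corner estimates to control the regime where $|\mathfrak F_0(\phi_{\ell_n})|_1\to0$, at which point the compactness adds little.
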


\begin{proof}
Since $\dim(\mathfrak M_\mathbb R)=2$, there are at most two simple roots in $\Phi_+$. We will only deal with the case that there are precisely two simple roots since the remaining cases are much simpler. Denote by $\alpha_1,\alpha_2$ the two simple roots and $\varpi_1,\varpi_2$ the corresponding fundamental weights so that $$\langle\alpha_i,\varpi_j\rangle=\frac12|\alpha_i|^2\delta_{ij},~1\leq i,j\leq2.$$
Recall that $\mathcal V_+$ is the dominate Weyl chamber of the positive spherical roots $\Phi_+$, $$\mathcal V_+=\text{Span}_{\mathbb R_{\geq0}}\{\varpi_1,\varpi_2\}.$$ Hence $\partial\mathcal V_+$ consists of two rays $\mathbb R_{\geq0}\varpi_i,i=1,2$. Indeed, $\mathbb R_{\geq0}\varpi_i\subset W_{\alpha_j}$ for $i\not=j$.

A dominate simple piecewise linear function can be written as
\begin{align}\label{phi-lem}
\phi(y)=\max\{\Lambda(y)-\lambda,0\},
\end{align}
where the gradient $\Lambda\in{\mathcal V_+}$. We may write
\begin{align}\label{Lambda-decomp}
\Lambda=c_1\varpi_1+c_2\varpi_2,~c_1,c_2\geq0.
\end{align}
Its crease
$$\mathfrak I_\phi=\{y\in \mathcal V_+|\Lambda(y)=\lambda\}$$
is a line segment with normal vector $\Lambda$. If we order $\varpi_1,\varpi_2$ counterclockwise, then $\text{supp}(\phi)$ is just the upper right hand side of $\mathfrak I_\phi$ and $\mathfrak F_+$ ($\mathfrak F_0$, resp.) is the upper right hand side part of $\partial P_+\cap\partial P$  (Weyl walls, resp.) cut out by $\mathfrak I_\phi$.

Note that $\langle\varpi_1,\varpi_2\rangle\geq0$. By \eqref{Lambda-decomp}, $\langle\Lambda,\varpi_i\rangle\geq0$ for $i=1,2$. When there are constants $\epsilon_i>0$, $i=1,2$, so that
$$\langle\frac{\Lambda}{|\Lambda|},\varpi_i\rangle\geq\epsilon_i>0,$$
there is a constant $c_i=c_i(P,\epsilon_i)$ so that
$$|\mathfrak F_+|_2\geq c_i|\mathfrak F_0\cap W_{\alpha_j}|,~i\not=j.$$
By \eqref{Lambda-decomp}, when $\varpi_1\not\perp\varpi_2$, such $\epsilon_1$ and $\epsilon_2$ always exist, and we get the lemma.


It remains to deal with the case when $\varpi_1\perp\varpi_2$ and $\langle\frac{\Lambda}{|\Lambda|},\varpi_1\rangle\leq \epsilon\ll1$. This can only happen when $0<{c_1}\ll{c_2}$ in \eqref{Lambda-decomp}. Since $c_2>0$, $\langle\frac{\Lambda}{|\Lambda|},\varpi_2\rangle$ is almost $|\varpi_2|>0$. The set
$$\text{supp}(\phi)\cap W_{ \alpha_2}=\{t\varpi_1|t\geq\frac{\lambda}{c_1|\varpi_1|^2}\}.$$
However, since $c_1\ll c_2=O(1)$,
$$|\text{supp}(\phi)\cap W_{ \alpha_2}|_1=o(|P\cap W_{\alpha_2}|_1),~\text{as}~\frac{c_1}{c_2}\to0^+.$$
Hence, in this case,
$$|\mathfrak F_0|_1=O(|\mathfrak  F_0\cap W_{\alpha_1}|_1),~\text{as}~\frac{c_1}{c_2}\to0^+.$$
But $\langle\frac{\Lambda}{|\Lambda|},\varpi_2\rangle>\frac12|\varpi_2|>0$ if $\langle\frac{\Lambda}{|\Lambda|},\varpi_1\rangle$ is sufficiently small. By the arguments of the previous case we can choose $\epsilon_2>0$ such that
$$|\mathfrak F_+(\phi)|_1\geq\epsilon_2 |\text{supp}(\phi)\cap W_{\alpha_1}|_1.$$
The lemma then follows from the above two relations.
\end{proof}

\begin{lem}\label{small-support-lem}
There is a constant $\delta=\delta(P)>0$ which depends only on $P$ such that for a $\mathcal W$-dominate simple piecewise linear function $u$ on $P_+$ satisfies $|\text{supp}(u)|_2<\delta(P)$, it holds $\mathcal L_X(u)>0$.
Here by $|\cdot|_2$ we denote the $2$-dimensional Lebesgue measure of a set in $\mathfrak M_\mathbb R$.
\end{lem}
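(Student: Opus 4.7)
The plan is to decompose $\mathcal L_X(u)$ via \eqref{L-Theta} as $\mathcal L_X(u) = B(u) - I(u)$, where $B(u) = \sum_A \lambda_A^{-1}\int_{F_A\cap R} u\langle y,\nu_A\rangle\pi\,d\sigma_0 \geq 0$ is the outer-facet boundary integral and $I(u) = \int_R u\Theta\pi\,dy$ is the interior integral over $R := \mathrm{supp}(u)\cap P_+$, and then to show that the nonnegative boundary part dominates $|I(u)|$ once $|R|_2$ is small enough. First I would normalize, using linearity of $\mathcal L_X$ in $u$, so that $u = \max\{\Lambda-\lambda,0\}$ has $|\Lambda|=1$, and set $M := \max_{P_+}u$. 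A monotonicity observation (for each fixed $\Lambda$, $|R|_2$ is continuous and strictly increasing in $M$), combined with compactness of the direction set $\{\Lambda\in\mathcal V_+ : |\Lambda|=1\}$, yields that $|R|_2<\delta$ forces $M<M_0(\delta)$ with $M_0(\delta)\to 0$ uniformly in $\Lambda$.

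The uniform interior bound is the easy ingredient: the identity $\Theta\pi = (\bar S+\theta_X)\pi - \langle 2\rho,\nabla\pi\rangle$ exhibits $\Theta\pi$ as a polynomial on $P$ (the $1/\langle\alpha,y\rangle$ singularities of $\Theta$ cancel against factors of $\pi$), so it is bounded by some $C_0 = \|\Theta\pi\|_{L^\infty(P)}$, giving $|I(u)|\leq C_0 M\,|R|_2$.

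For the lower bound on $B(u)$, I would case-split according to the face $\sigma^*$ of $P_+$ on which $\Lambda$ attains its maximum. In the generic case where $\sigma^* = \{v^*\}$ is a vertex lying in $\mathrm{int}(\mathcal V_+)$, one has $\pi(v^*)>0$; for small $M$, $R$ is a small convex polygon at $v^*$ with two outer-facet edges of length comparable to $M$ along which $u$ grows linearly from $0$ to $M$, and direct estimation gives $B(u)\geq c_1(P)\,M^2$ while $|R|_2\leq c_2 M^2$ gives $|I(u)|\leq C_0 c_2 M^3$. If instead $\sigma^*$ meets a Weyl wall, the observation that $\Theta(y)\to-\infty$ as $y$ approaches any Weyl wall (since $\langle\alpha,\rho\rangle>0$ and $\langle\alpha,y\rangle\to 0^+$) produces a uniform wall-neighborhood on which $\Theta<0$; for small $M$, $R$ is contained there, so $I(u)\leq 0$ and thus $\mathcal L_X(u) = B(u) - I(u) \geq B(u) + |I(u)| > 0$, with strict positivity since $u\pi>0$ on the interior of $R$. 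In the remaining thin-strip case where $\sigma^* = F_A$ is an outer facet (so $\Lambda$ is parallel to $\nu_A$), $u\equiv M$ on $F_A$ yields $B(u)\geq c(P,A)\,M$, while $|R|_2\leq CM$ gives $|I(u)|\leq C_0 CM^2$, so again positivity holds for small $M$.

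The last step is to patch the case-wise local thresholds into a uniform threshold $M_0(P)>0$ via compactness of the direction space and continuous dependence of the relevant constants within each regime, and then to convert $M<M_0$ back into $|R|_2<\delta(P)$ using the scaling $|R|_2\geq c(P)M^2$ in the vertex regime and $|R|_2\geq c(P)M$ in the strip regime. The main obstacle I expect is the uniform treatment of transitions between the three regimes — in particular when $\sigma^*$ degenerates to a Weyl-wall edge, a configuration which by the analysis underlying Lemma \ref{ratio-lem} only arises for product root systems with $\Lambda$ on $\partial\mathcal V_+$; in these degenerate directions the Case B bound based on $\Theta<0$ near Weyl walls continues to apply and absorbs the boundary configurations, and continuous dependence in $\Lambda$ then glues the pieces together.
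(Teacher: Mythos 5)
Your overall strategy is the same as the paper's: split $\mathcal L_X(u)$ into the (nonnegative) outer-facet boundary integral and the interior integral against $\Theta\pi$, observe that $\Theta\pi=(\bar S+\theta_X)\pi-\langle2\rho,\nabla\pi\rangle$ is a bounded polynomial so the interior term is $O(M\,|\mathrm{supp}(u)|_2)$, and then show the boundary term wins for small support, falling back on the negativity of $\Theta$ near the singular locus when the boundary term degenerates. Those are exactly the two mechanisms the paper uses.

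However, your case analysis has a genuine gap: you organize the degenerate cases around the Weyl walls of the \emph{little} Weyl group $\mathcal W$ (i.e.\ $\partial\mathcal V_+$), whereas the locus that actually kills the boundary term is the zero set of $\pi$, which is cut out by the roots $\alpha\in\Phi_+^o$ of the \emph{ambient} group $G$ and in general contains facets of $P_+$ that are not Weyl walls of $\mathcal W$. This is the content of the paper's class $\mathcal F_1=\{F : \pi|_F=0\}$ and of its Cases 2 and 3. The starkest instance is $\dim(\mathcal V_z)=2$: there $\mathcal W$ is trivial, $\mathcal V_+$ is all of $\mathfrak M_\mathbb R$, so your Case B is vacuous, yet $P_+$ may have up to two facets on which $\pi\equiv0$. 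If $\Lambda$ attains its maximum at a vertex $v^*$ with $\pi(v^*)=0$, or is nearly anti-parallel to the inner normal of a facet in $\mathcal F_1$ (so that $\mathrm{supp}(u)$ is a thin strip along that facet), then your Case A hypothesis $\pi(v^*)>0$ fails, your Case C bound $B(u)\geq c\,M$ fails because the integrand on that facet carries the factor $\pi=0$, and Case B never triggers. In that regime the honest lower bound on $B(u)$ degrades to order $M^3$ (the extra vanishing of $\pi$ costs a factor of $M$), which is the \emph{same} order as your crude estimate $|I(u)|\leq C_0M|R|_2\leq CM^3$, so the comparison is inconclusive and the sign of $\Theta$ must be used. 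The fix is available with your own tools: since $\Theta=\bar S+\theta_X-2\sum_{\alpha\in\Phi_+^o}\langle\alpha,\rho\rangle/\langle\alpha,y\rangle$ and $\langle\alpha,\rho\rangle>0$, one has $\Theta\to-\infty$ as $y$ approaches \emph{any} component of $\{\pi=0\}$, not just $\partial\mathcal V_+$; this is precisely how the paper handles the thin-strip configurations (its estimate $(\bar S+\theta_X)\pi=o(\langle\rho,\nabla\pi\rangle)$ in the strip $\mathcal S_{\alpha_0}(c_1\delta)$). You should therefore re-index your Case B by the facets in $\mathcal F_1$ rather than by Weyl walls, and also supply the intermediate-angle regime (the paper's $1-\tau\geq\epsilon$ case, and its Lemma \ref{ratio-lem}), where the trace of $\partial(\mathrm{supp}(u))$ on the facets with $\pi>0$ is comparable to its trace on the bad facets so that the boundary term still dominates; as written, your compactness patching glosses over exactly this transition.
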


\begin{proof}
Denote the two classes of facets of $P_+$:
\begin{align*}
\mathcal F_1&=\{F\ | \ F~\text{is a facet of $P_+$ so that}~\pi|_{F}=0\},\\
\mathcal F_2&=\{F\ | \ F~\text{is a facet of $P_+$ so that}~\pi|_{F}\not=0\}.
\end{align*}
By \eqref{pi}, the zero set of $\pi$ is a union of lines passing trough $O\in \mathfrak M_\mathbb R$. Also since $P_+$ only contains dominate weights of $G$, when rank$(G)=2$,  $\mathcal F_1$ can contain at most two facets. Otherwise a Weyl wall defined by some $\alpha\in\Phi_+^G$ will pass through the interior of $P_+$ and there is a non-dominate weight lies in $P_+$.

Let
$$u(y)=\max\{\Lambda(y)-c,0\},~y\in P_+$$ be
a $\mathcal W$-dominate simple piecewise linear function with $\Lambda\in \mathcal V_+$.
Then for $\delta>0$ sufficiently small, if $|\text{supp}(u)|_2<\delta$,
\begin{align}\label{measure-compare}
|\text{supp}(u)|_2=o(|\text{supp}(u)\cap\partial P_+|_1),~|\text{supp}(u)\cap\partial P_+|_1\to0.
\end{align}

We first deal with the most complicated case when $\dim(\mathcal V_z)=2$. That is, $\mathcal W$ is trivial. In this case the valuation cone $\mathcal V_+=\mathfrak N_\mathbb Q$, the polytope $P_+=P$ and $\mathfrak F_0(\cdot)$ is always empty. We have three cases:

\vskip 10pt
\emph{Case-1.} $\mathcal F_1=\emptyset$. Note that $\rho$ is a dominate weight. Hence
\begin{align}\label{2rho-nabla-pi}
\langle2\rho,\nabla\pi\rangle=\sum_{\alpha\in\Phi_+^o}\frac{2\langle\rho,\alpha\rangle}{\langle\alpha,y\rangle}\pi(y)\geq0.
\end{align}
Since $u\geq0$, by \eqref{L(u)},
\begin{align}\label{L(u)-lower-bound}
\mathcal L_X(u)\geq\sum_{A=1}^{d_+}\frac1{\lambda_A}\int_{F_A\cap P_+}u\langle y,\nu_A\rangle\pi \,d\sigma_0-\int_{P_+}u(\overline S+\theta_X)\pi \,dy.
\end{align}
Combining with \eqref{measure-compare} we get the conclusion.

\vskip 10pt

\emph{Case-2.} $\mathcal F_1$ contains precisely one facet $F_0$ orthogonal to $\alpha_0\in\Phi_+^o$. For simplicity we assume there is only one root in $\Phi_+^o$ that orthogonal to $F_0$. When there are at least two roots, they  have common zero set $F_0$ in $P_+$ and the lemma can be proved in a same way. Since $P_+$ is the convex hull of dominate $G$-weights, $\langle \alpha_0,y\rangle\geq0$ for any $y\in P_+$. Denote by $\alpha_0'$ the projection of $\alpha_0$ in $\mathfrak M_\mathbb R$. It is a non-zero vector since $\alpha_0\in\Phi_+^o$. Then $\alpha_0'$ is an inner normal vector of $F_0$ (see Figure 1).

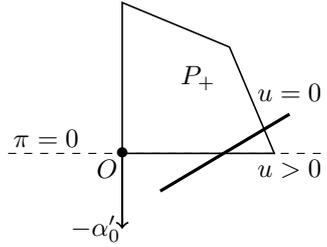
\begin{figure}[h]
\begin{tikzpicture}
\draw[semithick] (0,0) -- (0,2) -- (1.41,1.41) -- (2,0)--(0,0);
\draw[dashed] (-1.5,0) -- (2.75,0);
\draw (-1,0.2) node{$\pi=0$};
\draw[very thick]  (0.5,-0.5)  -- (2.2,0.4+0.12);
\draw (2.2,0.8) node{$u=0$};
\draw (2.2,-0.2) node{$u>0$};
\draw[->, thick] (0,0) -- (0,-1);
\draw (-0.2,-0.2) node {$O$};
\draw (-0.35,-1) node {$-\alpha_0'$};
\draw (1,1) node {$P_+$};
\draw (0,0) node{$\bullet$};
\end{tikzpicture}
\caption{\emph{The case when $1-\tau\geq\epsilon_0>0$.}}
\end{figure}

Denote $$\tau=-\frac{\langle\Lambda,\alpha_0'\rangle}{|\Lambda||\alpha_0'|}\in[-1,1].$$
First, we consider the case when $1-\tau\geq\epsilon>0$ for some sufficiently small $\epsilon>0$,
which means the angle between $\Lambda$ and $\alpha_0'$ is not too small (see Figure 1).  Then there are constants $c_0=c_0(\epsilon, P)$, $\delta=\delta(\epsilon, P)>0$ so that
$$|\mathfrak F_+(u)\setminus F_0|_1\geq c_0|\text{supp}(u)\cap F_0|_1,~\text{if}~|\text{supp}(u)|_2<\delta\ll1.$$
Again by \eqref{L(u)-lower-bound}, we can choose $\delta$ sufficiently small such that $\mathcal L_X(u)>0$. It suffices to consider the case when $1-\tau\leq \epsilon$. That is, $\Lambda$ is almost parallel to $-\alpha_0'$. In fact, when $\epsilon$ is small,  there exists $c_1=c_1(P,\epsilon)>0$ such that (see Figure 2):
\begin{align*}
\text{supp}(u)\subset\mathcal S_{\alpha_0}(c_1\delta):=P_+\cap\{y|0\leq\langle\alpha_0,y\rangle\leq c_1\delta\}.
\end{align*}

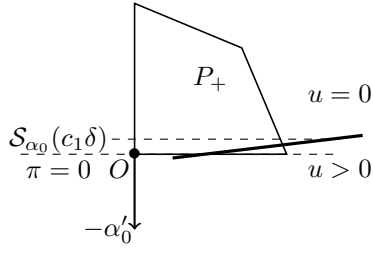
\begin{figure}[h]
\begin{tikzpicture}
\draw[semithick] (0,0) -- (0,2) -- (1.41,1.41) -- (2,0)--(0,0);
\draw[dashed] (-1.5,0) -- (2.7,0);
\draw[dashed] (-0.3,0.2) -- (2.7,0.2);
\draw (-1,-0.2) node{$\pi=0$};
\draw (-1,0.2) node{$\mathcal S_{\alpha_0}(c_1\delta)$};
\draw[very thick]  (0.5,-0.05)  -- (3,0.25);
\draw (2.7,0.8) node{$u=0$};
\draw (2.7,-0.2) node{$u>0$};
\draw[->, thick] (0,0) -- (0,-1);
\draw (-0.2,-0.2) node {$O$};
\draw (-0.35,-1) node {$-\alpha_0'$};
\draw (1,1) node {$P_+$};
\draw (0,0) node{$\bullet$};
\end{tikzpicture}
\caption{\emph{The case when $1-\tau\ll1$.}}
\end{figure}

Note that in the trip $\mathcal S_{\alpha_0}(c_1\delta)$, we have
\begin{align}\label{compare-order}
(\overline S+\theta_X)\pi=o(\langle\rho,\nabla \pi\rangle),~\text{as}~\delta\to0^+,
\end{align}
since the left-hand side contains at least one more $\langle\alpha_0,y\rangle$-factor than in $\langle\rho,\nabla \pi\rangle$. By \eqref{L(u)},
$$\mathcal L_X(u)\geq-\int_{P_+}u(\overline S+\theta_X)\pi \,dy+\int_{P_+}u\langle2\rho,\nabla \pi\rangle \,dy,~\forall u\in\mathcal C_{1,\mathcal W}.$$
Combining with \eqref{2rho-nabla-pi} and \eqref{compare-order}, we see that $\mathcal L_X(u)>0$ if $|\text{supp}(u)|_2$ is small enough. In conclusion, the proof is completed in this case.

\vskip 10pt

\emph{Case-3.} $\mathcal F_1$ contains two facets $F_1, F_2$ orthogonal to the roots $\alpha_1$ and $\alpha_2$ in $\Phi_+^o$, respectively. We may assume $F_1$ and $F_2$ has a common point (it has to be $O$). Otherwise, since $|{\rm supp} (u)|$ is very small, we can assume that at $u$ vanishes on at least one of $\{F_1,F_2\}$. Hence we can deal the case as in \emph{Case-2} near the other facet.

Denote
$$\tau_i=-\frac{\langle\Lambda,\alpha_i'\rangle}{|\Lambda||\alpha_i'|}\in[-1,1],~i=1,2,$$
and the cone
$$\mathfrak C=\text{Span}_{\mathbb R_{\geq0}}\{-\alpha_1,-\alpha_2\}\cap\mathcal V_+.$$

\emph{Case-3.1.} If $1-\tau_i\geq\epsilon>0$ for both $i=1,2$  for some $\epsilon$ and $\Lambda\not\in\mathfrak C$, then there are constants $c_0=c_0(\epsilon, P)$, $\delta=\delta(\epsilon, P)>0$ so that
$$|\mathfrak F_+(u)\setminus (F_1\cup F_2)|_1\geq c_0|\text{supp}(u)\cap (F_1\cup F_2)|_1,~\text{if}~|\text{supp}(u)|_2<\delta\ll1.$$
As in the first subcase of \emph{Case-2}, by \eqref{L(u)-lower-bound} we see that $\mathcal L(u)>0$ if $|\text{supp}(u)|_2\ll1$ (see Figure 3).

\begin{figure}[h]
\begin{tikzpicture}
\draw[semithick] (0,0)--(2,2) --(3,1)-- (3,0)--(0,0);
\draw[dashed] (-1.5,0) -- (3.5,0);
\draw[dashed] (-1.2,-1.2) -- (2.2,2.2);
\draw (-1,-0.2) node{$\pi=0$};
\draw (1.2,1.8) node{$\pi=0$};
\draw[very thick]  (-0.20,-0.7)  -- (3.5,0.5);
\draw (3.5,0.8) node{$u=0$};
\draw (3.5,0.2) node{$u>0$};
\draw[->, semithick] (0,0) -- (0,-1);
\draw[->, semithick] (0,0) -- (-1,1);
\draw (-0.2,-0.2) node {$O$};
\draw (-0.35,-1) node {$-\alpha_1'$};
\draw (-1,1.2) node {$-\alpha_2'$};
\draw (2,0.75) node {$P_+$};
\draw (0,0) node{$\bullet$};
\end{tikzpicture}
\caption{\emph{Case-3.1: The case that $1-\tau_i\geq\epsilon>0$ for both $i=1,2$  for some $\epsilon$ and $\Lambda\not\in\mathfrak C$.}}
\end{figure}
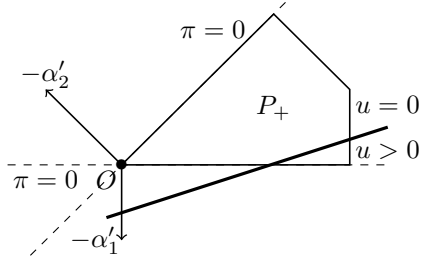

\emph{Case-3.2.} It remains to consider the case when there is some $i$ such that $1-\tau_i\leq \epsilon\ll1$ (see Figure 4) or $\Lambda\in\mathfrak C$ (see Figure 5).  In either case, there exists $c_1=c_1(P,\epsilon)>0$ such that
 as $|\text{supp}(u)|_2<\delta$ for sufficiently small $\delta$, it holds
$$\mathfrak F_+(u)\subset \mathcal S_{\alpha_1}(c_1\delta)\cup\mathcal S_{\alpha_2}(c_1\delta)$$
Here the two strips
\begin{align*}
\mathcal S_{\alpha_i}(c_1\delta)=P_+\cap\{y|0\leq\langle\alpha_i,y\rangle\leq c_1\delta\},~i=1,2.
\end{align*}
The conclusion can be obtained again by \eqref{compare-order}.

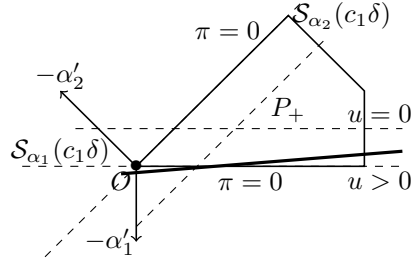
\begin{figure}[h]
\begin{tikzpicture}
\draw[semithick] (0,0)--(2,2) --(3,1)-- (3,0)--(0,0);
\draw[dashed] (-1.5,0) -- (3.5,0);
\draw[dashed] (-0.8,0.5) -- (3.5,0.5);
\draw[dashed] (-1.2,-1.2) -- (2.2,2.2);
\draw[dashed] (0,-0.8) -- (2.5,1.7);
\draw (1.5,-0.2) node{$\pi=0$};
\draw (1.2,1.8) node{$\pi=0$};
\draw (-1,0.2) node{$\mathcal S_{\alpha_1}(c_1\delta)$};
\draw (2.7,2) node{$\mathcal S_{\alpha_2}(c_1\delta)$};
\draw[very thick]  (-0.20,-0.1)  -- (3.5,0.2);
\draw (3.2,0.7) node{$u=0$};
\draw (3.2,-0.2) node{$u>0$};
\draw[->, semithick] (0,0) -- (0,-1);
\draw[->, semithick] (0,0) -- (-1,1);
\draw (-0.2,-0.2) node {$O$};
\draw (-0.35,-1) node {$-\alpha_1'$};
\draw (-1,1.2) node {$-\alpha_2'$};
\draw (2,0.75) node {$P_+$};
\draw (0,0) node{$\bullet$};
\end{tikzpicture}
\caption{\emph{Case-3.2: The case when $1-\tau_i\leq \epsilon\ll1$.}}
\end{figure}
\begin{figure}[h]
\begin{tikzpicture}
\draw[semithick] (0,0)--(2,2) --(3,1)-- (3,0)--(0,0);
\draw[dashed] (-1.5,0) -- (3.5,0);
\draw[dashed] (-0.8,0.5) -- (3.5,0.5);
\draw[dashed] (-1.2,-1.2) -- (2.2,2.2);
\draw[dashed] (0,-0.8) -- (2.5,1.7);
\draw (1.5,-0.2) node{$\pi=0$};
\draw (1.2,1.8) node{$\pi=0$};
\draw (-1,0.2) node{$\mathcal S_{\alpha_1}(c_1\delta)$};
\draw (2.7,2) node{$\mathcal S_{\alpha_2}(c_1\delta)$};
\draw[very thick]  (-0.20,0.7)  -- (2,-1.5);
\draw (2.7,-1.2) node{$u=0$};
\draw (01,-1.2) node{$u>0$};
\draw[->, semithick] (0,0) -- (0,-1);
\draw[->, semithick] (0,0) -- (-1,1);
\draw (-0.2,-0.2) node {$O$};
\draw (-0.35,-1) node {$-\alpha_1'$};
\draw (-1,1.2) node {$-\alpha_2'$};
\draw (2,0.75) node {$P_+$};
\draw (0,0) node{$\bullet$};
\end{tikzpicture}
\caption{\emph{Case-3.2: The case when $\Lambda\in\mathfrak C$.}}
\end{figure}
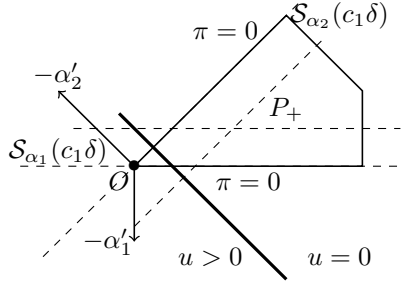

When $\dim(\mathcal V_z)\geq1$, there is at least one facet of $P_+$ in $\mathcal F_1$ that is contained in $\mathfrak F_0(u)$. This implies that $\mathfrak F_+(u)\cap \mathcal F_1$ is contained in at most one facet of $P_+$. We conclude the Lemma using Lemma \ref{ratio-lem} and the above arguments.
\end{proof}

\begin{proof}[Proof of Proposition \ref{ker-thm}]
We will use the arguments of \cite[Section 4]{Wang-Zhou}. According to the structure of the spherical root system $\Phi$, there are three cases: $\dim(\mathcal V_z)=2,1$ or $0$. We will deal with the three cases separately.

\emph{Case-1.} $\dim(\mathcal V_z)=2$. In this case, $\Phi=\emptyset$  and $\mathcal W$ is trivial. With the help of Lemma \ref{small-support-lem}, the proof is almost the same as the toric case in \cite[Theorem 4.1]{Wang-Zhou}. We will omit the details.

\vskip 10pt

\emph{Case-2.} $\dim(\mathcal V_z)=1$. In this case, $\Phi$ contains only one simple roots $\alpha$. The little Weyl group contains only one simple reflection
$$s_{\alpha}(y)=y-2\frac{\langle{\alpha},y\rangle}{|{\alpha}|^2}{\alpha}$$
and the dominate Weyl chamber
$$-\mathcal V=\{y|\langle{\alpha},y\rangle\geq0\}.$$

For a dominate simple piecewise linear function $u$ on $P_+$, set
$$\sigma_u:=\min\{|\text{supp}(u)|_2,|P_+\setminus\text{supp}(u)|_2\}.$$
From Lemma \ref{small-support-lem}, we can conclude that: if $\mathcal L_X(\cdot)>0$ for any non-central dominate convex simple piecewise linear function, then there are constants $\delta_0,\epsilon_0>0$ which depend only on $P$ such that
\begin{align}\label{large-support-dim2}
\mathcal L_X(u)>\epsilon_0>0,
\end{align}
whenever a dominate simple piecewise linear function $u$ has gradient $|\nabla u|=1$ and $$\sigma_u+\langle\nabla u,\alpha\rangle\geq\delta_0.$$

By Proposition \ref{HMA-eq-lem}, the minimizer $u_0$ satisfies the homogenous Monge-Amp\`ere equation. Denote by $W_{\alpha}$ the Weyl wall in $\mathfrak M_\mathbb R$ with respect to $\alpha$. Choose a point $z_O\in\text{Int}(P)\cap W_{\alpha}$ and let $\phi$ be a support function of $u_0$ at $z_O$. By the $\mathcal W$-invariance, we can choose $\phi$ to be a central affine function. Replacing $u_0$ by $u_0-\phi_0$ we may assume that $u_0\geq u_0(z_O)=0$.  Note that $u_0$ is not a central affine function. Thus $u_0\not\equiv0$.

Consider
$$\mathcal T=\{y\in P|u_0(y)=0\}.$$
By Lemma \ref{extreme} and the $\mathcal W$-invariance, we have two subcases: $\mathcal T$ contains a part of the Weyl wall $W_{\alpha}$, or  a segment parallel to $\alpha$ with endpoints on $\partial P$.

\emph{Case-2.1.} $\mathcal T$ contains a part of the Weyl wall $W_{\alpha}$. In this case, either $\mathcal T=W_{\alpha}\cap P$ or $\mathcal T$ has an edge
\begin{align}\label{edge-T}
E\subset\{y|\varpi(y)-\lambda=0\}
\end{align}
for some constant $\lambda>0$ and $\varpi$ satisfying $\langle{\alpha},\varpi\rangle>0$.

\begin{figure}[h]
\begin{tikzpicture}
\draw (0,0) node{$\bullet$};
\draw (1,0) node{$\bullet$};
\draw (1.2,0.3) node{${\alpha}$};
\draw (0,-3) node{$W_{\alpha}$};
\draw [semithick] (-1.25,-2.375) -- (1.25,-2.375) -- (2,0) -- (0,1.475) -- (-2,0) -- (-1.25,-2.375);
\draw (-2,-3.5) node{\emph{Case-2.1 (1)}: $E=\mathcal T=W_{\alpha}\cap P$.};
\draw (0.7,-1) node{$\mathcal T(=E)$};
\draw (-0.2,-0.2) node{$z_O$};
\draw [very thick, -latex] (0,0) -- (1,0);
\draw [very thick] (0,1.475) -- (0,-2.375);
\draw [dashed] (0,2) -- (0,-2.8);
\end{tikzpicture}
\begin{tikzpicture}
\fill[color=gray!20] (-1,0.76) -- (-0.5,-2.375) -- (0.5,-2.375) -- (1,0.76) -- (-1,0.76);
\draw (0,0) node{$\bullet$};
\draw (1,0) node{$\bullet$};
\draw (1.2,0.3) node{${\alpha}$};
\draw (0,-3) node{$W_{\alpha}$};
\draw [semithick] (-1.25,-2.375) -- (1.25,-2.375) -- (2,0) -- (0,1.475) -- (-2,0) -- (-1.25,-2.375);
\draw (-2,-3.5) node{\emph{Case-2.1 (2)}: $\mathcal T$ is a polytope.};
\draw (0.2,-1) node{$\mathcal T$};
\draw (1,-1) node{$E$};
\draw (-1.2,-1) node{$s_{\alpha}(E)$};
\draw (-0.2,-0.2) node{$z_O$};
\draw [very thick, -latex] (0,0) -- (1,0);
\draw [very thick] (-1,0.76) -- (-0.5,-2.375) -- (0.5,-2.375) -- (1,0.76) -- (-1,0.76);
\draw [dashed] (0,2) -- (0,-2.8);
\end{tikzpicture}
\caption{\emph{Case-2.1}}
\end{figure}
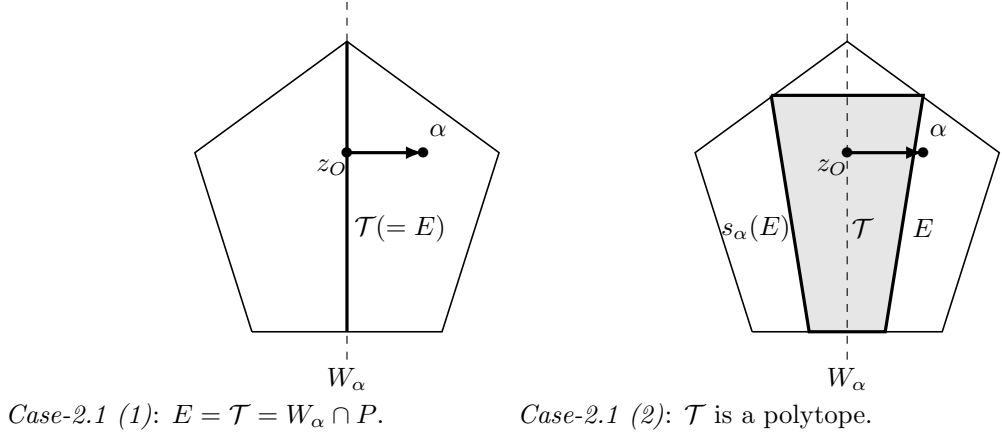

Consider the dominate simple piecewise linear function
$$\bar u(y)=\max\{\varpi(y)-\lambda,0\},~y\in P_+.$$
We will prove that $\mathcal L_X(\bar u)=0$ by using the arguments of \cite[Proof of Theorem 4.1, Step (iii)]{Wang-Zhou}.

Denote
$$\chi_+=\left\{\begin{aligned}1,~&\text{if}~\varpi(y)\geq\lambda,\\0,~&\text{if}~\varpi(y)\leq\lambda.\end{aligned}\right.$$
Let $u_0^+:=u_0\chi_+$ and $u_0^-:=u_0(1-\chi_+)$. Since $u_0\geq0$,  both $u_0^+$ and $u_0^-$ are dominate convex functions on $\mathcal V_+$. Denote
$$a_0:=\inf_{z\in\mathcal T}\lim_{t\to0^+}\frac1t(u_0(z+t{\bar\alpha})-u_0(z)).$$
Clearly, $a_0\geq0.$

\emph{Case-2.1.1.} $a_0>0$. Consider
$$u^\epsilon_+:=\chi_+\{u_0(y)-\epsilon\varpi(y-y_0)\}$$
for a fixed $0<\epsilon<a_0$. 
By the convexity of $P$, $\text{supp}(u^\epsilon_+)=\text{supp}(u_0^+)$ is contained in the strip
$$E+\mathbb R_{\geq0}\varpi:=\{y+t\varpi|t\geq0\}.$$
Note that $u^\epsilon_+$ is convex in $\text{supp}(u^\epsilon_+)$. Hence for any $z\in\text{supp}(u^\epsilon_+)$,
$$u^\epsilon_+(z)=u^\epsilon_+(y+t\varpi)\geq u^\epsilon_+(y)=0,$$
where $z=y+t\varpi$ for some $y\in E$ and $t\geq0$. Hence $u^\epsilon_+$ is convex on $P_+$.

We will further prove that
\begin{align}\label{nabla-alpha-u-eps}
{\alpha}(\nabla u^\epsilon_+)\geq0,
\end{align}
which implies that $u^\epsilon_+$ is $\mathcal W$-dominate.
By our assumption, $E$ is not parallel to $\alpha$. That is, its normal vector $\varpi\not\perp\alpha$. Thus, $E\not=s_{\alpha}(E)$. On the other hand, since $\mathcal T$ is $\mathcal W$-invariant, both $E$ and $s_{\alpha}(E)$ are its edges. Hence the convex hull of $E$ and $s_{\alpha}(E)$ is contained in $\mathcal T$. Then for any fixed $z\in E$, the nonnegative function
$$f_z^\epsilon(t):=u^\epsilon_+(z+t\alpha)$$
vanishes on $t_0\leq t\leq0$ with small $t_0\leq0$. Moreover, $t_0<0$ whenever $z$ in not a vertex of $\mathcal T$ that lies in $W_{\alpha}\cap P$. By convexity, we see that $(f_z^\epsilon)'\geq0$ and this concludes \eqref{nabla-alpha-u-eps}.

Now write
$$\tilde u_0:=u^-_0+u^\epsilon_+=u_0-\epsilon\bar u,$$
which is a dominate convex function. It holds
$$0=\mathcal L_X(u_0)=\mathcal L_X(\tilde u_0)+\epsilon\mathcal L_X(\bar u).$$
But by our assumption, both terms in the right-hand side are nonnegative, we conclude $\mathcal L_X(\bar u)=0$.

\emph{Case-2.1.2.} $a_0=0$. Consider
$$\tilde u^\epsilon_0(y):=u_0^-+\chi_+\max\{0,u_0-\epsilon\bar u(y)\}.$$
Similarly, we can show $\tilde u^\epsilon_0$ is dominate convex.
By \eqref{edge-T}, there is a $\delta(\epsilon)>0$ with $\displaystyle\lim_{\epsilon\to0^+}\delta(\epsilon)=0$
such that
$$\{y|u_0-\epsilon\bar u(y)\leq0\}\subset\{y|0\leq \lambda-\varpi(y)<\delta(\epsilon)\}.$$
By the non-negativity of $\mathcal L_X(\cdot)$, it holds
\begin{align*}
0\leq\epsilon\mathcal L_X(\bar u)&\leq\mathcal L_X(\tilde u^\epsilon_0)+\epsilon\mathcal L_X(\bar u)\\
&=\mathcal L_X(\tilde u^\epsilon_0+\epsilon\bar u)\\
&=\mathcal L_X(u_0)+\mathcal L_X(\tilde u^\epsilon_0-u_0)
\leq C\epsilon\delta(\epsilon)=o(\epsilon),~\epsilon\to0^+,
\end{align*}
for some constant $C$ depending only on $P$ and $u_0$. The last line is true since $\mathcal L_X(u_0)=0$ and
$$\text{supp}(\tilde u^\epsilon_0-u_0)\subset\{y|0\leq\lambda-\varpi(y)<\delta(\epsilon)\}.$$
Hence we get $\mathcal L_X(\bar u)=0$ again.

When $E\perp{\alpha}$, one can choose $\varpi={\alpha}$. We can similarly check that the functions $ u_0$ and $\tilde u_0^\epsilon$ are dominate convex and conclude $\mathcal L_X(\bar u)=0$ as before.

\emph{Case-2.2.} $\mathcal T$ contains a segment $E$ parallel to ${\alpha}$ with endpoints on $\partial P$. In this case, either $\mathcal T=E$ or $E$ is an edge of the polytope $\mathcal T$. In either case, the normal vector $\varpi$ of $E$ satisfies $\varpi\perp{\alpha}$. We can choose a point $y_0\in E$ so that
$$E\subset\{y|\varpi(y-y_0)=0\}~\text{and}~\mathcal T\subset\{y|\varpi(y-y_0)\leq0\}.$$
By similar arguments as \emph{Case-2.1}, we have $\mathcal L_X(\bar u)=0$ for the dominate convex function
$$\bar u(y)=\max\{\varpi(y-y_0),0\},~y\in P_+.$$

\begin{figure}[h]
\begin{tikzpicture}
\draw (0,0) node{$\bullet$};
\draw (1,0) node{$\bullet$};
\draw (1.2,0.3) node{${\alpha}$};
\draw (0,-3) node{$W_{\alpha}$};
\draw [dashed] (0,2) -- (0,-2.8);
\draw [semithick] (-1.25,-2.375) -- (1.25,-2.375) -- (2,0) -- (0,1.475) -- (-2,0) -- (-1.25,-2.375);
\draw (-2,-3.5) node{\emph{Case-2.2 (1)}: $\mathcal T$ is a line segment.};
\draw (1,1) node{$\mathcal T$};
\draw (-0.2,-0.2) node{$z_O$};
\draw [very thick, -latex] (0,0) -- (1,0);
\draw [very thick] (-1,0.76) -- (1,0.76);
\end{tikzpicture}
\begin{tikzpicture}
\fill[color=gray!20] (-1,0.76) -- (-2,0) -- (-1.25,-2.375) -- (1.25,-2.375) -- (2,0) -- (1,0.76) -- (-1,0.76);
\draw (0,0) node{$\bullet$};
\draw (1,0) node{$\bullet$};
\draw (1.2,0.3) node{${\alpha}$};
\draw (0,-3) node{$W_{\alpha}$};
\draw [dashed] (0,2) -- (0,-2.8);
\draw [semithick] (-1.25,-2.375) -- (1.25,-2.375) -- (2,0) -- (0,1.475) -- (-2,0) -- (-1.25,-2.375);
\draw (-2,-3.5) node{\emph{Case-2.2 (2)}: $\mathcal T$ is a polytope.};
\draw (1,-1) node{$\mathcal T$};
\draw (-0.2,-0.2) node{$z_O$};
\draw [very thick, -latex] (0,0) -- (1,0);
\draw [very thick] (-1,0.76) -- (-2,0) -- (-1.25,-2.375) -- (1.25,-2.375) -- (2,0) -- (1,0.76) -- (-1,0.76);
\end{tikzpicture}
\caption{\emph{Case-2.2}}
\end{figure}

\vskip 10pt

\emph{Case-3.} $\dim(\mathcal V_z)=0$. That is, $\Phi_+$ contains two simple roots
$\alpha_1$, $\alpha_2$ which form a basis of $\mathfrak M_\mathbb R$. The only central affine functions are constant functions. Also by the $\mathcal W$-invariance, $P$ contains the origin $O$.

For a dominate simple piecewise linear function
$$\phi_\lambda(y)=\max\{\Lambda(y)-\lambda,0\},~y\in P,$$
we have $\Lambda\in \mathcal V_+$ and  $\Lambda(y)\geq0$ for $y\in P_+$. Hence $$\phi_\lambda(y)=\phi_0(y),~\forall\lambda\leq0.$$
Let $v_{P}$ be the support function of $P$. Then
$$\phi_\lambda(y)=0,~\forall\lambda\geq v_{P}(\Lambda).$$
Thus combining with Lemma \ref{small-support-lem}, we can conclude that: if $\mathcal L_X(\cdot)>0$ for any non-constant dominate convex simple piecewise linear function, then there is are constants $\delta_0,\epsilon_0>0$ which depend only on $P$ such that
\begin{align}\label{large-support-dim2}
\mathcal L_X(u)>\epsilon_0>0,
\end{align}
for any dominate simple piecewise linear function $u$ with $|\nabla u|=1$ and $|\text{supp}(u)|_2\geq\delta_0$.

Note that the only fixed point of the $\mathcal W$-invariant is the origin $O$. Hence $u_0\geq u_0(O)$ if $u_0$ is
 dominate convex. We assume $u_0\geq u_0(O)=0$ by adding a constant.
Since $\mathcal V_z=\{O\}$, we have
$\theta_X=0$ in \eqref{L(u)}.
Again, by Proposition \ref{HMA-eq-lem}, $u_0$ satisfies the homogenous Monge-Amp\`ere equation. By $\mathcal W$-invariance, $\phi_0=u_0(O).$ Without loss of generality, we may assume that $u_0(O)=0$. Consider the set
$$\mathcal T=\{y\in P|u_0(y)=0\}.$$
Clearly $\mathcal T$ is a $\mathcal W$-invariant convex set. By Lemma \ref{extreme}, $\mathcal T$ is either a line segment with endpoints in $\partial P$ or a $\mathcal W$-invariant convex polytope with vertices
on $\partial P$.

\begin{figure}
\begin{tikzpicture}
\draw[semithick] (-1.41,1.41) -- (0,2) -- (1.41,1.41) -- (2,0) -- (1.41,-1.41) -- (0,-2) -- (-1.41,-1.41) -- (-2,0) -- (-1.41,1.41);
\draw [very thick, -latex] (0,0) -- (1,0);
\draw (1,0) node{$\bullet$};
\draw (0,0) node{$\bullet$};
\draw (-0.2,-0.2) node {$O$};
\draw (1.3,0.2) node {$\alpha_1$};
\draw [very thick, -latex] (0,0) -- (0,1);
\draw (0,1) node{$\bullet$};
\draw (-0.3,1.73/2) node {$\alpha_2$};
\draw (0.3,-1.73/2) node {$\mathcal T$};
\draw (0.4,-2.25) node {$W_{\alpha_1}$};
\draw (-2.3,0.3) node {$W_{\alpha_2}$};
\draw[dashed] (-2.4,0) -- (2.4,0);
\draw[dashed] (0,-2.4) -- (0,2.4);
\draw[very thick] (0,-2) -- (0,2);
\draw (-2,-3.5) node{\emph{Case-3.1}: $\mathcal T$ is a line segment.};
\end{tikzpicture}
\begin{tikzpicture}
\fill[color=gray!30] (-1/2,1.73) -- (1/2,1.73) -- (1.25,3*1.73/4) -- (1+1.4/2,1/2) -- (1+1.4/2,-1/2) -- (1.25,-3*1.73/4) -- (1/2,-1.73) -- (-1/2,-1.73) -- (-1.25,-3*1.73/4) -- (-1-1.4/2,-1/2) -- (-1-1.4/2,1/2) -- (-1.25,3*1.73/4) -- (-1/2,1.73);
\draw[thick] (-1,1.73) -- (1,1.73) -- (2,0) -- (1,-1.73) -- (-1,-1.73) -- (-2,0) -- (-1,1.73);
\draw[very thick] (-1/2,1.73) -- (1/2,1.73) -- (1.25,3*1.73/4) -- (1+1.4/2,1/2) -- (1+1.4/2,-1/2) -- (1.25,-3*1.73/4) -- (1/2,-1.73) -- (-1/2,-1.73) -- (-1.25,-3*1.73/4) -- (-1-1.4/2,-1/2) -- (-1-1.4/2,1/2) -- (-1.25,3*1.73/4) -- (-1/2,1.73);
\draw (-0.2,-0.2) node {$O$};
\draw[dashed] (-2.4*1.73/2,1.2) -- (2.4*1.73/2,-1.2);
\draw[dashed] (2.4*1.73/2,1.2) -- (-2.4*1.73/2,-1.2);
\draw[dashed] (0,2.4) -- (0,-2.4);
\draw [very thick, -latex] (0,0) -- (1,0);
\draw (1,0) node{$\bullet$};
\draw (0,0) node{$\bullet$};
\draw (1.3,0) node {$\alpha_1$};
\draw [very thick, -latex] (0,0) -- (-1/2,1.73/2);
\draw (-1/2,1.73/2) node{$\bullet$};
\draw (-1/2-0.3,1.73/2) node {$\alpha_2$};
\draw [very thick, -latex] (0,0) -- (1/2,1.73/2);
\draw (1/2,1.73/2) node{$\bullet$};
\draw (1/2+0.3,1.73/2) node {$\alpha_3$};
\draw (0.3,-1.73/2) node {$\mathcal T$};
\draw (0.8,1.3) node {$E$};
\draw (0.4,-2.25) node {$W_{\alpha_1}$};
\draw (2.2,2/1.73+0.25) node {$W_{\alpha_2}$};
\draw (2.2,-2/1.73+0.25) node {$W_{\alpha_3}$};
\draw (-2,-3.5) node{\emph{Case-3.2}: $\mathcal T$ is a polytope.};
\end{tikzpicture}
\caption{\emph{Case-3}}
\end{figure}

\vskip 10pt

\emph{Case-3.1.} $\mathcal T$ is a line segment. One easily checks that this can happen only if $\Phi_+$ contains precisely two simples roots $\alpha_1\perp\alpha_2$, and $\mathcal T\subset\{y|\langle\alpha_i,y\rangle=0\}$ for one root $\alpha_i$ of them. Without loss of generality we may assume that $\langle\alpha_1,y\rangle=0$ on $\mathcal T$. We will prove
$\mathcal L(\bar u)=0$
for the dominate convex function $\bar u(y)=|\alpha_1(y)|$.

Set
$$a_0:=\inf_{z\in\mathcal T}\lim_{t\to0^+}\frac1t(u_0(z+t\alpha_1)-u_0(z)).$$
Clearly, $a_0\geq0.$

\vskip 10pt

\emph{Case-3.1.1.} $a_0>0$. Then for any $\epsilon<a_0$,
\begin{align}\label{epsilon-str}
\tilde u^\epsilon_0:=u_0-\epsilon\alpha_1(y)
\end{align}
is a dominate convex function. This is true since
\begin{align*}
\alpha_1(\nabla \tilde u^\epsilon_0)(z+t\alpha_1)&\geq \alpha_1(\nabla\tilde u_0^\epsilon)(z)=(a_0-\epsilon)|\alpha_1|^2>0,~\forall t>0,z\in\mathcal T
\end{align*}
and
$$\alpha_2(\nabla \tilde u^\epsilon_0)(y)=\alpha_2(\nabla  u_0)(y)\geq0.$$
Hence, it holds
$$0=\mathcal L_X(u_0)=\mathcal L_X(\tilde u_0^\epsilon)+\epsilon\mathcal L_X(\bar u).$$
However, both terms in the right-hand side are nonnegative. It implies $\mathcal L_X(\bar u)=0$.

\vskip 10pt

\emph{Case-3.1.2.} $a_0=0$. Consider
$$\tilde u^\epsilon_0(y):=\max\{0, u_0-\epsilon\alpha_1(y)\}.$$
We shall first show that $\tilde u^\epsilon_0$ is dominate convex. For each $z\in\mathcal T$ and $t>0$,
consider the function
$$f_z(t):=\tilde u_0(z+t\alpha_1).$$
It is convex on $P_+$ and $f_z(0)=0.$ Suppose that $$t_0=\max\{t\geq0|f_z(t)=0\}.$$
Then by convexity, $\tilde u^\epsilon_0(z+t\alpha_1)\equiv0$ for $0\leq t\leq t_0$.
Hence $f'_z(t_0)>0$. Again by convexity, for any $y=z+t\alpha_1,t>0,$
$$\alpha_1(\nabla\tilde u^\epsilon_0)(y)=f_z'(t)\geq0.$$
Also, since $\alpha_1\perp\alpha_2$, one concludes
$$\alpha_2(\nabla\tilde u^\epsilon_0)(z+t\alpha_1)\geq0,~\forall t\geq0$$
since $u_0$ is $\mathcal W$-dominate convex. In summary we get that $\tilde u^\epsilon_0$ is $\mathcal W$-dominate convex.

Since $\mathcal T\subset W_{\alpha_1}$, there is a $\delta(\epsilon)>0$ with $\displaystyle\lim_{\epsilon\to0^+}\delta(\epsilon)=0$
such that
$$\{y|\tilde u_0^\epsilon(y)=0\}\subset\{y|0\leq\alpha_1(y)<\delta(\epsilon)\}.$$
By the non-negativity of $\mathcal L_X(\cdot)$, it holds
\begin{align*}
0\leq\epsilon\mathcal L_X(\bar u)&\leq\mathcal L_X(\tilde u_0^\epsilon)+\epsilon\mathcal L_X(\bar u)\\
&=\mathcal L_X(\tilde u_0^\epsilon+\epsilon\bar u)\\
&=\mathcal L_X(u_0)+\mathcal L_X(\tilde u_0^\epsilon-u_0)
\leq C\epsilon\delta(\epsilon)=o(\epsilon),~\epsilon\to0^+,
\end{align*}
for some constant $C$ depending only on $P$ and $u_0$. The last line is true since $\mathcal L(u_0)=0$ and
$$\text{supp}(\tilde u_0^\epsilon-u_0)\subset\{y|0\leq\alpha_1(y)<\delta(\epsilon)\}.$$
Hence we get $\mathcal L_X(\bar u)=0$ again.

\vskip 10pt

\emph{Case-3.2.} $\mathcal T$ is a polytope. Clearly $\mathcal T\not=P$. Hence there is an edge $E$ of $\mathcal T$ passing through  a point $y_0\in\text{Int}(P_+)$ with endpoints in $\partial P$. By the convexity of $\mathcal T$, there is a dominate vector $\varpi$ so that
$$\mathcal T\subset\{y|\langle\varpi,y\rangle\leq\langle\varpi,y_0\rangle\}.$$
Let
\begin{align}\label{test-func-322}
\bar u=\max\{\varpi(y-y_0),0\},~y\in P_+.
\end{align}
We will prove that $\mathcal L_X(\bar u)=0$.

Set
$$\chi_+=\left\{\begin{aligned}1,&~\text{if}~\varpi(y)\geq\varpi(y_0)\\0,&~\text{if}~\varpi(y)\leq\varpi(y_0)\end{aligned}\right..$$
Since $u_0\geq0$, $u_0^+:=u_0\chi_+$ and $u_0^-:=u_0(1-\chi_+)$ are dominate convex functions on $P_+$.
Denote $$a_0:=\inf_{z\in E}\lim_{t\to0^+}\frac1t(u_0(z+t\varpi)-u_0(z)).$$
We have two cases as before:

\vskip 10pt

\emph{Case-3.2.1.} $a_0>0$. Consider
$$u^\epsilon_+:=\chi_+\cdot [u_0(y)-\epsilon\varpi(y-y_0)]$$
for a fixed $0<\epsilon<a_0$. It suffices to check that $u^\epsilon_+$ is dominate convex. Once this is known, we can proceed with
$$\tilde u_0:=u^-_0+u^\epsilon_+=u_0-\epsilon\bar u$$
as in \emph{Case-3.1.1}. We will only prove that
\begin{align}\label{nabla-alpha1-u-eps}
\alpha_1(\nabla u^\epsilon_+)\geq0
\end{align}
since the case of $\alpha_2$ can be proved in a same way.

If $E$ is parallel to $\alpha_1$, then $\varpi(\alpha_1)=0$ and $\alpha_1(\nabla u^\epsilon_+)=\alpha_1(\nabla u_0^+)$. Then we get \eqref{nabla-alpha1-u-eps}. If $E$ is not parallel to $\alpha_1$, both $E$ and $s_{\alpha_1}(E)$ are its edges since $\mathcal T$ is $\mathcal W$-invariant. Hence the convex hull of $E$ and $s_{\alpha_1}(E)$ is contained in $\mathcal T$. Then as in \emph{Case-2.1.1}, for any $z\in E$, the nonnegative function
$$f_z^\epsilon(t):=u^\epsilon_+(z+t\alpha_1)$$
vanishes on $t_0\leq t\leq0$ for $t_0\leq 0$ small. Moreover, $t_0<0$ whenever $z$ is not a common vertex of $\mathcal T$ and $P$ that lies in $W_{\alpha_1}$. By convexity, we see that $(f_z^\epsilon)'\geq0$ and  \eqref{nabla-alpha1-u-eps} holds.

\vskip 10pt

\emph{Case-3.2.2.} $a_0=0$. As in \emph{Case-3.1.2}, we consider
$$\tilde u^\epsilon_0(y):=u_0^-+\chi_+\max\{0,u_0-\epsilon\bar u(y)\}.$$
By similar arguments,  we have $\tilde u^\epsilon_0$ is dominate convex and hence $\mathcal L_X(\bar u)=0$.
\end{proof}

\vskip 20pt


\section{K-semistabilty and polystable degenerations of Fano spherical varieties}\label{ap}

One application of Proposition \ref{HMA-eq-lem} is to exclude strictly K-semistable Fano $G$-spherical varieties. Let $M$ be a $\mathbb Q$-Fano $G$-spherical with $P_+$ its moment polytope. By using \cite[Proposition 3.15 and Section 3.2.4]{Del3} (see also \cite[Therems 1.4 and 1.9]{Gagliardi-Hofscheier}), we see that in the Fano case, the coefficients $\lambda_A$'s in \eqref{L(u)} are
$$\lambda_A=1+2\langle\rho,u_A\rangle,~A=1,...,d_+,$$
and consequently, the functional $\mathcal L(\cdot)$ can be simplified as
\begin{align*}
\mathcal L(u)=\int_{P_+}\langle\nabla u,y-2\rho\rangle\pi \,dy,~\forall u\in\mathcal C_{1,\mathcal W}.
\end{align*}

By \cite[Theorem A]{Del3} (see also \cite[Section 3.2]{LZZ}) we have the following criterion:
\begin{lem}\label{sKss-crierion}
$M$ is K-semistable if and only if the functional $\mathcal L(\cdot)$ defined by \eqref{L(u)} is non-negative on $\mathcal C_{1,\mathcal W}$, and is strictly K-semistable if and only if there is in addition a fundamental weight $\varpi\in\mathcal V_+$ with respect to $\Phi_+$ such that $\mathcal L(\ell_\varpi)=0$ for the dominate convex function
\begin{align}\label{u-varpi}
\ell_\varpi(y):=\langle\varpi,y\rangle,~y\in P_+.
\end{align}
\end{lem}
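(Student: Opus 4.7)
The strategy is to translate Delcroix's barycenter criterion \cite[Theorem A]{Del3} (cf.\ \cite[Section 3.2]{LZZ}) into the variational language developed above. For the first equivalence, Propositions \ref{TC-classification} and \ref{TC-futaki} identify K-semistability with the condition $\mathcal L(u) \geq 0$ on the subclass of rational piecewise linear $\mathcal W$-dominate convex functions on $P_+$. Every $u \in \mathcal C_{1,\mathcal W}$ is a pointwise monotone supremum of such functions via its affine support representation with rational coefficients (Proposition \ref{dominate-convex-function} ensures that the approximating gradients remain in $\mathcal V_+$), and $\mathcal L$ is continuous under this convergence thanks to the integrability hypothesis $\int_{\partial P_+} u\pi\,d\sigma < +\infty$ controlling the boundary term and monotone convergence for the volume term. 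The converse is tautological, giving the first claim.

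For the characterization of strict K-semistability, the Fano form $\mathcal L(u) = \int_{P_+}\langle \nabla u, y - 2\rho\rangle \pi\,dy$ is linear in $\nabla u$, so evaluation at $\ell_\varpi(y) = \langle \varpi, y\rangle$ gives $\mathcal L(\ell_\varpi) = V \langle \varpi, b - 2\rho\rangle$ with $b := V^{-1}\int_{P_+}y\,\pi\,dy$. For the easy direction, any fundamental weight $\varpi$ of $\Phi_+$ in $\mathcal V_+$ satisfies $\varpi \notin \mathcal V_z$ (it pairs positively with its dual simple spherical root), so $\ell_\varpi$ is a non-central affine function; the associated one-piece test configuration is non-product by Proposition \ref{TC-classification} with vanishing Futaki invariant $\mathcal L(\ell_\varpi) = 0$, which witnesses failure of K-stability. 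For the converse, K-semistability places $b - 2\rho$ in the cone $\{w : \langle v, w\rangle \geq 0 \text{ for all } v \in \mathcal V_+\}$, and Delcroix's criterion asserts that K-stability is equivalent to $b - 2\rho$ lying in the relative interior of this cone. Failure of K-stability therefore produces some $v \in \mathcal V_+ \setminus \mathcal V_z$ with $\langle v, b - 2\rho\rangle = 0$, and since the extreme rays of $\mathcal V_+$ modulo the lineality space $\mathcal V_z$ are spanned by the fundamental weights of $\Phi_+$, one can take $v$ to be a fundamental weight $\varpi$, giving $\mathcal L(\ell_\varpi) = 0$.

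The principal obstacle is the nontrivial direction of Delcroix's criterion, namely propagating strict positivity of $\mathcal L$ on non-central affine functions to strict positivity on all non-central $u \in \mathcal C_{1,\mathcal W}$. This does not follow formally from the linearity of $\mathcal L$ in $\nabla u$, since the weighted measure $\pi\,dy$ couples the pointwise value of $\nabla u(y)$ to $y$. A self-contained approach within the framework of this paper is to invoke Proposition \ref{HMA-eq-lem}: any hypothetical non-central minimizer of $\mathcal L$ would be a non-affine generalized solution of the homogeneous Monge-Amp\`ere equation on $P$, after which Lemma \ref{extreme} and the polyhedral analysis of Section \ref{r2} can be used to extract an affine non-central minimizer in a fundamental weight direction, closing the loop.
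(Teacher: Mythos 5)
Your proof is correct and follows essentially the same route as the paper's: both directions of the first equivalence come from Propositions \ref{TC-classification}--\ref{TC-futaki} (plus the approximation by rational piecewise linear dominate functions, which you spell out and the paper leaves implicit), and the characterization of strict semistability is obtained in both cases by citing Delcroix's barycenter criterion \cite[Theorem A]{Del3} as the key external input, computing $\mathcal L(\ell_\varpi)$ linearly in terms of the barycenter, and locating the barycenter on a facet of the relevant cone cut out by a fundamental weight. The only differences are cosmetic --- you phrase the boundary condition via $\mathbf b-2\rho$ and the dual cone of $\mathcal V_+$ rather than via $\mathbf b-\sum_{\alpha\in\Phi_+}\alpha$ and $\mathrm{Span}_{\mathbb R_{\geq0}}\Phi_+$ (the same bookkeeping the paper elides), and your closing suggestion of a self-contained route through Proposition \ref{HMA-eq-lem} is not needed for this lemma and, as developed in Section \ref{r2}, would only cover rank two.
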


\begin{proof}
The first part that $\mathcal L(\cdot)$ is non-negative is a corollary of Propositions \ref{TC-classification}-\ref{TC-futaki}. It remains to deal with the strictly K-semistable case. For the ``only if" part, by \cite[Theorem A]{Del3}, $M$ is strictly K-semistable if and only if the barycenter of $\mathbf b(P_+)$ against the measure $\pi dy$ satisfies
$$\mathbf{b}(P_+):=\frac{\int_{P_+}y\pi dy}{\int_{P_+}\pi dy}\in\sum_{\alpha\in\Phi_+}\alpha+\partial(\text{Span}_{\mathbb R_{\geq0}}\Phi_+).$$
That is, for the simple roots $\alpha_1,...,\alpha_r\in\Phi_+$, there are non-negative constants $c_1,...,c_r$ so that
$$\mathbf{b}(P_+)-\sum_{\alpha\in\Phi_+}\alpha=\sum_{i=1}^rc_i\alpha_i.$$
As $\mathbf{b}(P_+)$ lies in the boundary, there is at least one simple root $\alpha_{i_0}\in\Phi$ so that $c_{i_0}=0$. The corresponding fundamental weight $\varpi_{i_0}$ given by
$$\varpi_{i_0}(\alpha_{i_0})=\frac12|\alpha_{i_0}|^2~\text{and}~\varpi_{i_0}(\alpha_j)=0,~\forall j\not=i_0.$$
satisfies
$$\varpi_{i_0}(\mathbf{b}(P_+)-\sum_{\alpha\in\Phi_+}\alpha)=0.$$
Note that the function $\ell_{\varpi_{i_0}}(y)$ given by \eqref{u-varpi} is $\mathcal W$-dominate convex since $\varpi_{i_0}$ is a fundamental weight. Hence it defines a $G$-equivariant test configuration $(\tilde M,\tilde L)$ by Proposition \ref{TC-classification}. Since $M$ is K-semistable, by Proposition \ref{TC-futaki}, the Futaki invariant
$$\text{Fut}(\tilde M,\tilde L)=\mathcal L(\ell_{\varpi_{i_0}})=0.$$

Now we turn to the ``if" part. Suppose that $M$ is K-semistable and there is a fundamental weight $\varpi\in\mathcal V_+$ so that $\mathcal L(\ell_\varpi)=0$. By Propositions \ref{TC-classification}-\ref{TC-futaki}, $\ell_\varpi$ defines a $G$-equivariant test configuration with vanishing Futaki invariant. Note that $\varpi\not\in\mathcal V_z$. By the last point of Proposition \ref{TC-classification}, this test configuration is non-product and we get the Lemma.
\end{proof}

Combining with Proposition \ref{HMA-eq-lem}, we can prove:
\begin{prop}\label{ss-limit-KRF}
Let $G/H$ be a spherical homogenous space with irreducible spherical root system $\Phi$. Assume in addition that the valuation cone has trivial central part $\mathcal V_z=\{O\}$. Then there is no strictly K-semistable $\mathbb Q$-Fano compactification of $G/H$ whose moment polytope $P_+$ extends to a $\mathcal W$-invariant convex polytope $P\subset\mathfrak M_\mathbb R$.
\end{prop}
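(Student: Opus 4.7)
The plan is to argue by contradiction using Proposition \ref{HMA-eq-lem} together with the structure at the fixed point $O$. Suppose $M$ is a strictly K-semistable $\mathbb Q$-Fano compactification. By Lemma \ref{sKss-crierion}, there exists a fundamental weight $\varpi\in \mathcal V_+$ (with respect to $\Phi_+$) such that $\mathcal L(\ell_\varpi)=0$, where $\ell_\varpi(y)=\langle\varpi,y\rangle$. Since $M$ is K-semistable the extremal vector field $X$ vanishes and $\mathcal L_X=\mathcal L\geq 0$ on $\mathcal C_{1,\mathcal W}$. Because $\varpi$ is dominant, $\ell_\varpi$ is $\mathcal W$-dominate (Proposition \ref{dominate-convex-function}), and its $\mathcal W$-invariant extension is the piecewise linear convex function
\[
\tilde\ell_\varpi(y)=\max_{w\in\mathcal W}\langle w(\varpi),y\rangle,\qquad y\in P.
\]
Thus $\tilde\ell_\varpi\in\mathcal C_{1,\mathcal W}$ is a minimizer of $\mathcal L_X$, and by Proposition \ref{HMA-eq-lem} it must be a generalized solution of the homogeneous Monge-Amp\`ere equation on the whole of $P$ in the sense of Alexandrov.

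The crucial geometric step is to show that $O\in\Int(P)$. First, since $P$ is $\mathcal W$-invariant and convex, for any $y\in P$ the orbit average $\frac{1}{\#\mathcal W}\sum_{w\in\mathcal W}w(y)$ lies in $P$ and is $\mathcal W$-fixed; by the assumption $\mathcal V_z=\{O\}$ and the duality between the fixed-point subspaces in $\mathfrak N_\mathbb R$ and $\mathfrak M_\mathbb R$, this average equals $O$, so $O\in P$. If $O$ lay in $\partial P$, the normal cone $N_P(O)\subset\mathfrak N_\mathbb R$ would be a nonzero $\mathcal W$-invariant convex cone. However, since $\Phi$ is irreducible and $\mathcal V_z=\{O\}$, the little Weyl group $\mathcal W$ acts irreducibly on $\mathfrak N_\mathbb R$, so any $\mathcal W$-invariant linear subspace is either $\{O\}$ or all of $\mathfrak N_\mathbb R$; the latter forces $P=\{O\}$, contradicting $\dim P=\rank(G/H)\geq 1$. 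Hence $N_P(O)=\{O\}$ and $O\in\Int(P)$.

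The final step is a Monge-Amp\`ere mass computation at $O$. Because $\tilde\ell_\varpi(O)=0$ and $\langle w(\varpi),y\rangle\leq\tilde\ell_\varpi(y)$ for every $w\in\mathcal W$ and $y\in P$, the subdifferential satisfies
\[
\partial\tilde\ell_\varpi(O)\supseteq\mathrm{Conv}\{w(\varpi):w\in\mathcal W\}.
\]
Since $\mathcal W$ acts irreducibly on $\mathfrak N_\mathbb R$ and $\varpi\neq 0$ (as $\varpi$ is a fundamental weight), the orbit $\{w(\varpi):w\in\mathcal W\}$ spans $\mathfrak N_\mathbb R$, so its convex hull has nonempty interior and therefore strictly positive Lebesgue measure. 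Consequently the Monge-Amp\`ere measure of $\tilde\ell_\varpi$ puts positive mass on the single interior point $\{O\}\subset\Int(P)$, contradicting the HMA property. This rules out the existence of a strictly K-semistable $\mathbb Q$-Fano compactification.

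I expect the principal obstacle to be the verification that $O\in\Int(P)$, since it relies on translating the irreducibility of $\Phi$ and the triviality of $\mathcal V_z$ into a rigidity statement for $\mathcal W$-invariant convex sets in $\mathfrak M_\mathbb R$; the identification of $\partial\tilde\ell_\varpi(O)$ with the $\mathcal W$-orbit hull and the resulting contradiction with the Alexandrov HMA condition are then routine.
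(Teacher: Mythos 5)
Your argument is correct and follows essentially the same route as the paper: extract a fundamental weight $\varpi$ with $\mathcal L(\ell_\varpi)=0$ from Lemma \ref{sKss-crierion}, apply Proposition \ref{HMA-eq-lem} to conclude that the $\mathcal W$-invariant extension of $\ell_\varpi$ is a generalized HMA solution on all of $P$, and derive the contradiction from the fact that its subdifferential at $O$ contains $\mathrm{Conv}\{w(\varpi)\,|\,w\in\mathcal W\}$, which is full-dimensional because $\Phi$ is irreducible and $\mathcal V_z=\{O\}$. The only blemish is in your verification that $O\in\Int(P)$ (a step the paper leaves implicit): a normal cone that merely spans $\mathfrak N_\mathbb R$ does not force $P=\{O\}$ (consider a vertex of a full-dimensional polytope); the cleaner argument is that averaging an interior point of $P$ over $\mathcal W$ yields a $\mathcal W$-fixed interior point, which must be $O$.
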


\begin{proof}
Suppose that $M$ is a strictly K-semistable $\mathbb Q$-Fano compactification of $G/H$ with $P_+$ its associated polytope. By Lemma \ref{sKss-crierion}, $\mathcal L(u)\geq0$ for any $u\in\mathcal C_{1,\mathcal W}$ and there is a fundamental weight $\varpi$ such that $\mathcal L(u_\varpi)=0$ for the function defined by \eqref{u-varpi}. By Proposition \ref{HMA-eq-lem}, $u_\varpi$ satisfies the homogeneous Monge-Amp\`ere equation on whole polytope $$P=\displaystyle\bigcup_{w\in\mathcal W}w(P_+).$$ Hence the dimension of the normal mapping $\mathcal N_{u_\varpi}(O)$ of $u_\varpi$ at the origin $O$ can not exceed $\text{rank}(G/H)-1$. Note that each linear piece has normal vector $w(\varpi)$ for some $w\in\mathcal W$ and $\mathcal N_{u_\varpi}(O)$ contains the convex hull Conv$\{w(\varpi)|w\in \mathcal W\}$. Since $\mathcal V_z=\{O\}$, this contradicts to the assumption that $\Phi$ is irreducible.
\end{proof}

We can apply Proposition \ref{ss-limit-KRF} to exclude strictly K-semistable $\mathbb Q$-Fano compactifications of semisimple rank $2$ groups. That is:
\begin{prop}\label{non-existence-sKss-Q-fano}
Let $\mathcal G$ be a semisimple group of rank $2$. Then there is no strictly K-semistable $\mathbb Q$-Fano $\mathcal G$-compactifications.
\end{prop}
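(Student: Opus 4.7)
The natural strategy is to reduce to Proposition~\ref{ss-limit-KRF} via a case analysis on the type of $\mathcal G$. First I establish the common setup: by Example~\ref{grp-exa}, for a $\mathcal G$-compactification the spherical root system is identified with $\Phi^\mathcal G$, the little Weyl group with the Weyl group of $\mathcal G$, and the central part $\mathcal V_z=\mathfrak z(\mathfrak g)$; semisimplicity of $\mathcal G$ gives $\mathcal V_z=\{O\}$. The Appendix further ensures that $P_+$ extends to a $\mathcal W$-invariant convex polytope $P\subset\mathfrak M_\mathbb R$. Since $\mathcal G$ is semisimple of rank two, its Lie algebra is one of $A_2$, $B_2$, $G_2$ (simple), or $A_1\oplus A_1$ (reducible). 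In the three simple cases $\Phi^\mathcal G$ is irreducible and Proposition~\ref{ss-limit-KRF} applies directly, yielding the conclusion.

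The main obstacle is the reducible case $A_1\oplus A_1$, where the subdifferential argument of Proposition~\ref{ss-limit-KRF} breaks down: the Weyl orbit $\{\pm\varpi_i\}$ of a fundamental weight has convex hull of dimension only $1$, which is compatible with the HMA constraint at the origin. Assuming for contradiction that $M$ is strictly K-semistable, Lemma~\ref{sKss-crierion} provides a fundamental weight $\varpi_i$ with $\mathcal L(\ell_{\varpi_i})=0$. Applying the explicit evaluation $\mathcal L(\ell_\varpi)=V\langle\varpi,\mathbf b(P_+)-2\rho\rangle$ together with the linearity identity $\mathcal L(\ell_{\varpi_1+\varpi_2})=\mathcal L(\ell_{\varpi_1})+\mathcal L(\ell_{\varpi_2})$, the scenario where both $\mathcal L(\ell_{\varpi_i})$ vanish is ruled out: one would then have $\mathcal L(\ell_\rho)=0$ for $\rho=\varpi_1+\varpi_2$, whose Weyl orbit $\{\pm\varpi_1\pm\varpi_2\}$ has $2$-dimensional convex hull, so Proposition~\ref{HMA-eq-lem} applied to $\ell_\rho$ gives an immediate contradiction via the positive Monge-Amp\`ere mass at the origin.

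Hence exactly one $\mathcal L(\ell_{\varpi_j})$ vanishes, which places $\mathbf b(P_+)$ strictly on a single coordinate ray $2\rho+\mathbb R_{>0}\alpha_k$ for $k\neq j$. The remaining task, which I expect to be the main technical hurdle, is to exclude this ``single-ray'' scenario. A plausible route is a direct polytope analysis combining the Fano condition $\lambda_A=1+2\langle\rho,u_A\rangle$ on each outer facet with the $\mathcal W$-invariance of $P$: a careful estimate of the $\pi$-weighted barycenter coordinates should force $\mathbf b(P_+)$ into the interior of the cone $2\rho+\mathbb R_{\ge0}\Phi_+$ (yielding K-stability) or outside it (yielding K-instability), leaving no room for the boundary edge case. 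An alternative approach is to exploit a product-type decomposition for $A_1\oplus A_1$ compactifications combined with the fact that the only $\mathbb Q$-Fano $SL_2$-compactification is the K-stable $\mathbb P^3$, so strict K-semistability of $M$ would pull back to one of the factors, which is impossible.
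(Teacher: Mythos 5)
Your reduction of the simple cases ($A_2$, $B_2$, $G_2$) to Proposition~\ref{ss-limit-KRF} is exactly what the paper does, and your diagnosis that the argument fails for $A_1\oplus A_1$ because the Weyl orbit of a fundamental weight spans only a line is correct. Your elimination of the sub-case where both $\mathcal L(\ell_{\varpi_1})$ and $\mathcal L(\ell_{\varpi_2})$ vanish (via $\ell_{\varpi_1+\varpi_2}$ and Proposition~\ref{HMA-eq-lem}) is also sound. But the proof stops there: the remaining scenario, $\mathbf{b}(P_+)-2\rho=c\,\alpha_2$ with $c>0$, is precisely the hard case, and you only offer two unexecuted ``plausible routes''. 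Note that $\ell_{\varpi_1}$ extends to $|\langle\varpi_1,\cdot\rangle|$, which \emph{is} a generalized solution of the homogeneous Monge--Amp\`ere equation on all of $P$ (its normal image at the crease is the one-dimensional segment joining $-\varpi_1$ to $\varpi_1$, of planar measure zero), so no contradiction can be extracted from Proposition~\ref{HMA-eq-lem} in this case; a genuinely different input is required.

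Neither of your proposed routes is likely to close the gap. Whether $\langle\varpi_1,\mathbf{b}(P_+)-2\rho\rangle$ vanishes depends on the actual polytope, and nothing in the Fano condition $\lambda_A=1+2\langle\rho,u_A\rangle$ together with $\mathcal W$-invariance forces it to be nonzero by a soft estimate. The product-decomposition idea also fails: $SO_4(\mathbb C)$ is a quotient, not a product, of rank-one groups, and even for $SL_2\times SL_2$ a $\mathcal W$-invariant Fano polytope need not be a product of intervals, so strict semistability does not ``pull back to a factor''. The paper's actual resolution of the $A_1\oplus A_1$ case is of a different nature entirely: it invokes the finiteness theorem of \cite{LL-IMRN-2021} (for each fixed semisimple $\mathcal G$ there are only finitely many K-semistable $\mathbb Q$-Fano $\mathcal G$-compactifications) and then verifies the four possible groups $SO_4(\mathbb C)$, $SL_2\times SL_2$, $PSL_2\times PSL_2$, $PSL_2\times SL_2$ by an explicit computer enumeration. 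To complete your proof you would need either to reproduce that finite check or to supply a genuinely new argument for the reducible case.
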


\begin{proof}
Recall Example \ref{grp-exa}. The spherical root system $\Phi$ of $\mathcal G$ can be identified with $\Phi^\mathcal G$ and $\mathcal V_z=\{O\}$ since $\mathcal G$ is semisimple. Also by Proposition \ref{moment-polytope-convex}, the moment polytope of any $\mathcal G$-compactification always extends to a $\mathcal W$-invariant polytope. Hence when $\Phi^\mathcal G$ is irreducible, the proposition follows from Proposition \ref{ss-limit-KRF}.

It remains to deal with the cases when $\Phi^\mathcal G$ is of type $A_1\oplus A_1$. This is the only possible case if $\Phi^\mathcal G$ is reducible. In this case, the only possible choices of $\mathcal G$ are $SO_4(\mathbb C)$, $SL_2(\mathbb C)\oplus SL_2(\mathbb C)$, $PSL_2(\mathbb C)\oplus PSL_2(\mathbb C)$ and $PSL_2(\mathbb C)\oplus SL_2(\mathbb C)$. It is proved in \cite{LL-IMRN-2021} that for each fixed semisimple $\mathcal G$, the possible semistable  $\mathbb Q$-Fano $\mathcal G$-compactifications are finite. By running a \texttt{Wolfram Mathematica} programma in \cite[Section 5.2]{LL-IMRN-2021} we confirm that there is no strictly semistable compactifications in these cases.
\end{proof}

Now we turn to the case when rank$(G/H)=2$ and $\dim(\mathcal V_z)=1$, we can show that:
\begin{prop}\label{limit-kss-Q-fano}
Let $G/H$ be a spherical homogenous space of rank $2$ with $\dim(\mathcal V_z)=1$. Let $M$ be a strictly K-semistable $\mathbb Q$-Fano compactification of $G/H$ whose moment polytope $P_+$ extends to a $\mathcal W$-invariant convex polytope $P\subset\mathfrak M_\mathbb R$. Then
\begin{align}\label{ker-L=eq}
\{u\in {\mathcal C_{1,\mathcal W}}|\mathcal L(u)=0\}=\{\ell_\Lambda+c|\Lambda\in\mathcal V_+,c\in\mathbb R\}.
\end{align}
And there is a unique polystable degeneration  that degenerates $M$ to a K-stable $\mathbb Q$-Fano horospherical variety.
\end{prop}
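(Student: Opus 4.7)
The proof splits into three parts corresponding to the two inclusions of \eqref{ker-L=eq} and the construction of the polystable degeneration.

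\textbf{Part 1: the inclusion $\supseteq$.} I would exploit linearity of $\mathcal L$ on the space of affine functions. By K-semistability, together with Propositions \ref{TC-classification}--\ref{TC-futaki} and the remark after Definition \ref{stability-def}, $\mathcal L$ vanishes on the $(\dim\mathcal V_z+1)=2$-dimensional subspace of central affine functions $\{\ell_\Lambda+c:\Lambda\in\mathcal V_z,\,c\in\mathbb R\}$. Strict K-semistability, via Lemma \ref{sKss-crierion}, supplies a fundamental weight $\varpi\in\mathcal V_+\setminus\mathcal V_z$ with $\mathcal L(\ell_\varpi)=0$. Because $\mathrm{rank}(G/H)=2$ and $\dim\mathcal V_z=1$, the set $\{\varpi\}\cup\mathcal V_z$ already spans $\mathfrak M_\mathbb R$, so linearity forces $\mathcal L\equiv 0$ on the whole space of affine functions, giving the desired inclusion on the dominate subset $\Lambda\in\mathcal V_+$.

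\textbf{Part 2: the inclusion $\subseteq$.} Let $u\in\mathcal C_{1,\mathcal W}$ with $\mathcal L(u)=0$. By Proposition \ref{HMA-eq-lem}, $u$ is a generalized solution of the homogeneous Monge-Amp\`ere equation on $P$. Because $\dim(\mathcal V_z)=1$, $\mathcal W$ is generated by a single reflection $s_\alpha$ across the wall $W_\alpha\subset P$, so I can follow Case-2 of the proof of Proposition \ref{ker-thm}. Lemma \ref{extreme} forces the contact set $\mathcal T$ at a minimum to have its extreme points on $\partial P$, and by $\mathcal W$-invariance $\mathcal T$ is $s_\alpha$-symmetric. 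If $u$ were not affine on $P_+$, the construction there would yield a simple piecewise linear $\bar u=\max\{\langle\varpi_0,y\rangle-\lambda_0,0\}\in\mathcal C_{1,\mathcal W}$ with $\mathcal L(\bar u)=0$ and $\lambda_0>0$. Combining this with $\mathcal L(\ell_{\varpi_0})=0$ (from Part 1) and the explicit Fano formula $\mathcal L(v)=\int_{P_+}\langle\nabla v,y-2\rho\rangle\pi\,dy$ gives
\begin{equation*}
\int_{\{\langle\varpi_0,y\rangle\le\lambda_0\}\cap P_+}\langle\varpi_0,y-2\rho\rangle\pi\,dy=0.
\end{equation*}
The decisive step is to rule this out for $\lambda_0>0$ by a monotonicity/sign analysis of $\lambda\mapsto\int_{\{\langle\varpi_0,y\rangle\le\lambda\}\cap P_+}\langle\varpi_0,y-2\rho\rangle\pi\,dy$: its only zero should occur at $\lambda=\min_{P_+}\langle\varpi_0,\cdot\rangle$, forcing $\bar u$ to be affine on $P_+$ and therefore $u$ to have the stated form.

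\textbf{Part 3: the polystable degeneration.} Any non-central affine $\ell_\Lambda+c\in\ker\mathcal L$ with $\Lambda\in\mathcal V_+\setminus\mathcal V_z$ corresponds via Proposition \ref{TC-classification} to a non-product $G$-equivariant test configuration $(\tilde M,\tilde L)$ with vanishing Futaki invariant. Two such choices of $\Lambda$ differ only by an element of $\mathcal V_z$, which by the last part of Proposition \ref{TC-classification} corresponds to a product test configuration and therefore acts on the same central fiber; hence the central fiber $M_0$ is unique up to $G$-equivariant isomorphism. The explicit combinatorial description of equivariant degenerations in \cite{Popov-1986,Del3,LL-arXiv-2021} shows that the $\mathbb C^*$-action generated by $\Lambda\notin\mathcal V_z$ kills all color data, so $M_0$ is a $\mathbb Q$-Fano horospherical variety. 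Since $X=0$ on the K-semistable $M$, this horospherical limit is (modified) K-polystable with trivial modification, i.e.\ K-stable.

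\textbf{Main obstacle.} The hardest step is the sign/monotonicity analysis in Part 2. A priori, even with the HMA structure and $\mathcal W$-symmetry, the restricted integral could vanish at some $\lambda_0>0$, permitting a genuinely non-affine $\bar u$ in the kernel. Ruling this out requires exploiting the strict positivity of $\pi$ on the interior of $P_+$, the placement of $2\rho$ with respect to the dominant chamber, and the Fano-specific form $\lambda_A=1+2\langle\rho,u_A\rangle$ of the boundary data.
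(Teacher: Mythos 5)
Your skeleton matches the paper's proof: $\supseteq$ in \eqref{ker-L=eq} by linearity of $\mathcal L$ on affine functions plus Lemma \ref{sKss-crierion}; $\subseteq$ via Proposition \ref{HMA-eq-lem}, Lemma \ref{extreme} and a one-variable sign analysis; the degeneration via the test-configuration dictionary. But Part 2 has two genuine gaps. First, the ``decisive step'' you defer is where the content lies. It does go through, but only after an explicit computation: in coordinates where the spherical root is $\alpha=(a,0)$ one has $\pi=ax^2$ and $2\rho$ restricts to $\alpha$, so for $f(\lambda)=\int_{P_+\cap\{u_1x+u_2y\geq\lambda\}}\bigl(u_1(x-a)+u_2y\bigr)ax^2\,dx\wedge dy$ the coarea formula gives $f'(\lambda)=-(\lambda-u_1a)\,c(\lambda)$ with $c\geq0$, i.e.\ $f$ increases up to $\lambda=u_1a$ and decreases afterwards; together with $f(0),f'(0)\geq0$ and $f(v_P(\mathbf u))=0$ this forces $f>0$ on the whole open interval, which is the contradiction. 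You name the right ingredients but never verify that the zero set of $f$ is only the endpoints, and that is the entire point.

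Second, and more seriously, your reduction ``if $u$ is not affine then the construction yields a simple piecewise linear kernel element with crease in the interior'' fails when the contact set of $u$ at the origin is the whole Weyl wall $P\cap W_\alpha$. There the origin only produces $\ell_{\mathbf u_0}$, $\mathbf u_0=(1,0)$, which is of the allowed form, and one must analyze contact sets $\mathcal T_0$ at interior points $z_0$. In the subcase where no $\mathcal T_0$ meets the interior of the wall, $\mathcal T_0$ sits on the far side of its edge $E$ from $W_\alpha$, so $u-\phi_0$ is supported on the side toward the wall and the associated simple piecewise linear function has gradient pointing out of $\mathcal V_+$: it is not dominate, and no monotonicity argument applies. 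The paper instead chooses $z_0$ so close to $W_\alpha$ that $E$ lies in the strip $\{\Theta<0\}=\{|x|<a/\bar S\}$ and replaces $u$ by $\phi_0$ on the near side of $E$, strictly decreasing $\mathcal L$ and contradicting semistability --- a different mechanism your proposal does not anticipate. Finally, in Part 3 the K-stability of the horospherical central fibre does not follow from ``$X=0$ on $M$''; it is derived from $\mathcal L(\ell_\Lambda)=\langle\Lambda,\mathbf b(P_+)-\alpha\rangle\int_{P_+}\pi\,dy$ together with \eqref{ker-L=eq}, which forces $\mathbf b(P_+)=\alpha$, and the uniqueness of the degeneration rests on the twisting classification of equivariant $\mathbb R$-test configurations in \cite{LL-arXiv-2021}, not merely on Proposition \ref{TC-classification}.
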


\begin{proof}
By our assumption, $\Phi$ contains only one pair of roots $\pm\alpha$. We can choose coordinates $x,y$ on $\mathfrak a$ so that
$\alpha=(a,0)$ for some $a>0$ that lies on the $Ox$-axis. Suppose that $P_+$ is the moment polytope of $M$. We first prove \eqref{ker-L=eq}.

Suppose that there is some $\bar u\in\mathcal C_{1,\mathcal W}$ so that $\mathcal L(\bar u)=0$. Up to adding a central affine function we may assume that $\bar u$ is non-negative and $\bar u(O)=0$. Then $\mathcal S_0:=\{\bar u=0\}$ is the contact set of $\bar u$ at $O$. By Lemma \ref{extreme}, we have three cases: $\mathcal S_0$ contains an edge orthogonal to $W_\alpha$, $\mathcal S_0$ is a polytope which has an edge totally lies in $P_+$ with endpoints on $\partial P$, or $\mathcal S_0=P\cap W_\alpha$.

\emph{Case-1.} The set $\mathcal S_0$ contains an edge $E$ orthogonal to $W_\alpha$. Suppose that
$$E\subset\{y-\lambda_0=0\}.$$
From the proof of Proposition \ref{ker-thm}, we see that $\mathcal L (\bar u')=0$ for
\begin{align*}
\bar u'=\max\{y-\lambda_0,0\}(\not\equiv0).
\end{align*}
Denote by $v_P(\cdot)$ the support function of $P$. We see that $-v_{P}((0,-1))\leq \lambda_0\leq v_{P}((0,1))$.

Consider
$$f(\lambda):=\int_{P_+\cap\{y-\lambda\geq0\}}yax^2\,dx\wedge dy,~-v_{P}((0,-1))\leq \lambda\leq v_{P}((0,1)).$$
Then
\begin{align*}
f'(\lambda)&=-\int_{P_+\cap\{y-\lambda=0\}}yax^2 \,d\sigma_0=-\lambda\left(a\int_{P_+\cap\{y-\lambda=0\}}x^2 \,d\sigma_0\right).
\end{align*}
Thus $f'(\lambda)\neq 0$ unless $\lambda=-v_{P}((0,-1)),0$ or $v_P((0,1))$. Since $f(-v_{P}((0,-1)))=f(v_{P}((0,1)))=0$ we get $\mathcal L (\bar u')=f(\lambda_0)>0$, a contradiction.

\emph{Case-2.} The set $\mathcal S_0$ is a polytope which has an edge $E$ totally lies in $P_+$. From the proof of Proposition \ref{ker-thm}, there is a dominate convex, simple piecewise linear function
\begin{align}\label{u-test-func}
\bar u'=\max\{l_{\mathbf u},0\}\not\equiv0
\end{align}
where $l_{\mathbf u}(x,y)=u_1x+u_2y-\lambda_0$ with $\mathbf u=(u_1,u_2),~u_1\geq0$ so that
\begin{align}\label{L-u=0}
\mathcal L(\bar u')=\int_{P_+\cap\{u_1x+u_2y-\lambda_0\geq0\}}(u_1(x-a)+u_2y)ax^2\,dx\wedge dy=0
\end{align}
and the crease of $\bar u'$ equals to $E$. Then $\lambda_0>0$. Consider
$$f(\lambda):=\int_{P_+\cap\{u_1x+u_2y-\lambda\geq0\}}(u_1(x-a)+u_2y)ax^2\,dx\wedge dy,~0\leq \lambda\leq v_{P}(\mathbf u).$$
Here, $v_P(\cdot)$ is the support function of $P$. Since $\alpha\in\text{Int}(P_+)$, we have $v_P(\mathbf u)>u_1a$.

By direct computation,
\begin{align*}
f'(\lambda)&=-|\mathbf u|\int_{P_+\cap\{u_1x+u_2y-\lambda=0\}}(u_1(x-a)+u_2y)ax^2 \,d\sigma_0\\
&=-(\lambda-u_1a)\left(a|\mathbf u|\int_{P_+\cap\{u_1x+u_2y-\lambda=0\}}x^2 \,d\sigma_0\right).
\end{align*}
Obviously, we have
\begin{align}\label{f'}
&f(0),f'(0)\geq0,~f'(au_1)=f'(v_P(\mathbf{u}))=0,\notag\\
&f'(\lambda)>0~\text{when}~0<\lambda<u_1a~\text{and}~f'(\lambda)<0~\text{when}~u_1a<\lambda< v_P(\mathbf{u}).
\end{align}
Also note that $f(v_P(\mathbf{u}))=0$. By our assumption, we have $\lambda_0\in(0,v_{P}(\mathbf{u}))$ such that $f(\lambda_0)=0$. However, by \eqref{f'}, $f(\lambda)>0$ for any $\lambda \in(0, v_P(\mathbf{u}))$. We obtain a contradiction again.

\emph{Case-3.}
The set $\mathcal S_0=P\cap W_\alpha$. Thus $\bar u|_{P\cap W_\alpha}=0$. From \emph{Case-2.2} in the proof of Proposition \ref{ker-thm} we see that $\mathcal L(\ell_{\mathbf{u_0}})=0$ for $\mathbf{u_0}=(1,0)$. Consider a point $z_0$ in the interior of $P_+$. Let $\phi_0$ be a support function of $\bar u$ at $z_0$ and $\mathcal T_0$ be the contact set $\{\bar u=\phi_0\}$. Then $\nabla\phi_0\in\mathcal V_+$ and $\mathcal T_0\subsetneq P_+$. Otherwise $\bar u=\phi_0$ and one directly gets \eqref{ker-L=eq}.

We have two subcases:

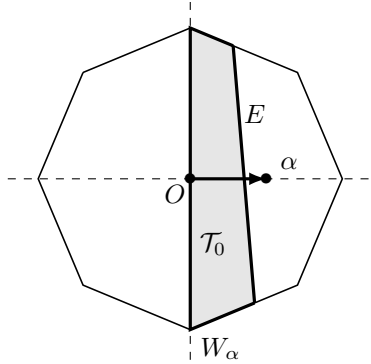
\begin{figure}[h]
\begin{tikzpicture}
\fill[color=gray!20]  (0.846,-0.846-0.8) -- (0,-2) -- (0,2) -- (0.564,1.2+0.564);
\draw[semithick] (-1.41,1.41) -- (0,2) -- (1.41,1.41) -- (2,0) -- (1.41,-1.41) -- (0,-2) -- (-1.41,-1.41) -- (-2,0) -- (-1.41,1.41);
\draw[very thick]  (0.564,1.2+0.564)  -- (0.846,-0.846-0.8);
\draw [very thick, -latex] (0,0) -- (1,0);
\draw (1,0) node{$\bullet$};
\draw (0,0) node{$\bullet$};
\draw (-0.2,-0.2) node {$O$};
\draw (1.3,0.2) node {$\alpha$};
\draw (0.3,-1.73/2) node {$\mathcal T_0$};
\draw (0.85,+1.73/2) node {$E$};
\draw (0.4,-2.25) node {$W_{\alpha}$};
\draw[dashed] (-2.4,0) -- (2.4,0);
\draw[dashed] (0,-2.4) -- (0,2.4);
\draw[very thick]  (0.846,-0.846-0.8) -- (0,-2) -- (0,2) -- (0.564,1.2+0.564);
\end{tikzpicture}
\caption{\emph{Case-3.1}}
\end{figure}

\vskip 10pt

\emph{Case-3.1.} There is a point $z_0$ so that $\mathcal T_0$ intersects an interior point of $P\cap W_\alpha$. Then $\phi_0$ is affine and nonpositive along $P\cap W_\alpha$, attaining its maximum $0$ at an interior point. Hence $\phi_0|_{P\cap W_\alpha}=0$ and $\nabla \phi_0=c\alpha$ for some constant $c>0$. Since $z_0\not\in W_\alpha$, we conclude that $\mathcal T_0$ is the intersection of some half-space with $P_+$, which has an edge $E\subset P_+$ with vertices on $\partial P$. In particular, $E$ does not intersect $W_\alpha$. Suppose that
\begin{align}\label{edge-eq}
E\subset\{(x,y)|xv_1+yv_2-\lambda_0=0\}
\end{align}
for some $\mathbf v=(v_1,v_2)\in\mathcal V_+$ and $\lambda_0>0$. Using the arguments of Propostion \ref{ker-thm}, we conclude that
$\mathcal L(\bar v)=0$  for
$$\bar v=\max\{xv_1+yv_2-\lambda_0,0\}.$$
This is impossible as showed in \emph{Case-2}.

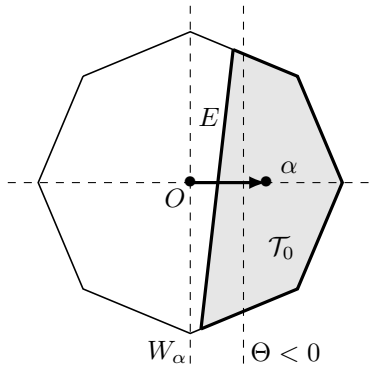
\begin{figure}[h]
\begin{tikzpicture}
\fill[color=gray!20] (0.564,1.2+0.564) -- (1.41,1.41) -- (2,0) -- (1.41,-1.41) -- (0.141,-0.141-1.8);
\draw[semithick] (-1.41,1.41) -- (0,2) -- (1.41,1.41) -- (2,0) -- (1.41,-1.41) -- (0,-2) -- (-1.41,-1.41) -- (-2,0) -- (-1.41,1.41);
\draw[very thick]  (0.564,1.2+0.564)  -- (0.141,-0.141-1.8);
\draw [very thick, -latex] (0,0) -- (1,0);
\draw (1,0) node{$\bullet$};
\draw (0,0) node{$\bullet$};
\draw (-0.2,-0.2) node {$O$};
\draw (1.3,0.2) node {$\alpha$};
\draw (1.2,-1.73/2) node {$\mathcal T_0$};
\draw (0.25,+1.73/2) node {$E$};
\draw (-0.3,-2.25) node {$W_{\alpha}$};
\draw (+1.25,-2.25) node {$\Theta<0$};
\draw[dashed] (-2.4,0) -- (2.4,0);
\draw[dashed] (0,-2.4) -- (0,2.4);
\draw[dashed] (0.7,-2.4) -- (0.7,2.4);
\draw[very thick]  (0.564,1.2+0.564) -- (1.41,1.41) -- (2,0) -- (1.41,-1.41) -- (0.141,-0.141-1.8);
\end{tikzpicture}
\caption{\emph{Case-3.2}}
\end{figure}

\vskip 10pt

\emph{{Case-3.2}}. $\mathcal T_0$ does not pass an interior point of $P_+\cap W_\alpha$ for any $z_0$. Suppose that $E$ is an edge of $\mathcal T_0$ satisfies \eqref{edge-eq} so that $$\mathcal T_0\subset\{(x,y)|xv_1+yv_2-\lambda_0\geq0\}.$$
Then $P\cap W_\alpha$ lies in the left-hand side of $E$.

Recall that when $M$ is K-semistable, the extremal vector field $X=0$, and the function $\Theta$ in \eqref{L-Theta} can be reduced to
$$\Theta(x,y)=\bar S-\frac{a}{|x|}.$$
In this case,
$$\{(x,y)|\Theta<0\}=\{(x,y)|-\frac1{\bar S}a<x<\frac1{\bar S}a\}.$$
By our assumption, we may choose $z_0$ sufficiently close to $W_\alpha$ so that $E\subset\{(x,y)|\Theta<0\}$. Let
$$\tilde u(x,y)=\left\{\begin{aligned}&\phi_0(x,y),&~\text{if}~xv_1+yv_2-\lambda_0\leq0,\\&\bar u(x,y),&~\text{if}~xv_1+yv_2-\lambda_0\geq0.\end{aligned}\right.$$
Then $\tilde u\leq\bar u$ and $\text{supp}(\bar u-\tilde u)\subset\{\Theta<0\}$. Consequently, we have
$$\mathcal L(\tilde u)<\mathcal L(\bar u)=0.$$
A contradiction. Hence it must hold $\bar u=\ell_\mathbf{u}$ for some $\mathbf{u}\in\mathcal V_+$.

Combining with the fact that $\mathcal L(\cdot)$ vanishes on every central affine function, we get \eqref{ker-L=eq}.

\vskip10pt

Finally, we show that any polystable degeneration is induced by $\ell_{\mathbf {u_0}}$ for $\mathbf {u_0}=(1,0)$.
Consider an arbitrary rational vector $\Lambda\in\mathcal V_+$ so that $\alpha(\Lambda)>0$ and the equivariant $\mathbb Z$-test configuration $\mathcal F_\Lambda$ constructed in \cite[Section 3]{LL-arXiv-2021}. By \cite[Corollary 4.6]{LL-arXiv-2021}, these are the only $\mathbb R$-test configurations of $M$ that has irreducible central fibre. And the ``polystable degeneration" must be of this form.

We can show that $\mathcal F_\Lambda$ coincides with the equivariant $\mathbb Z$-test configuration determined by $\ell_\Lambda$ in \cite[Section 2.4]{AK} or \cite[Section 3]{Del3}.  Indeed, by \cite[Section 4.4]{Del-202009}, $\mathcal F_\Lambda$ is a twist of $\mathcal F_{\Lambda'}$ with
$$\Lambda'=\frac{\alpha(\Lambda)}{|\alpha|^2}\alpha.$$
Up to rescaling, this is just $\mathcal F_{\alpha}$, the equivariant $\mathbb Z$-test configuration determined by $\ell_{\mathbf {u_0}}$.
Combining with \cite[Theorem 3.30-Corollary 3.31]{Del3},  the central fibre $\mathcal M_0$ of $\mathcal F_\alpha$ is a $\mathbb Q$-Fano horospherical variety with moment polytope $P_+$ (cf. \cite[Proposition 5.5]{Del3}). By \eqref{ker-L=eq}, the barycenter of $P_+$,
$$\mathbf{b}(P_+)=\frac{\int_{P_+}y\pi(y)\,dy}{\int_{P_+}\pi(y)\,dy}$$
satisfies
$$\mathcal L(\ell_\Lambda)=\langle\Lambda,\mathbf{b}(P_+)-\alpha\rangle\cdot\int_{P_+}\pi dx\wedge dy,~\forall\Lambda\in\mathcal V_+.$$
Hence,
\begin{align}\label{bar-ss-cond}
\mathbf{b}(P_+)=\alpha.
\end{align} Thus $\mathcal M_0$ is K-stable and we get the proposition.
\end{proof}

Recall the optimal test configurations defined in \cite{Gabor-JDG-2008}, a test configuration is called \emph{optimal} if it minimizes
$$\mathcal W(u):=\frac{\mathcal L(u)}{\|u\|_{L^2}}.$$
Clear \eqref{ker-L=eq} shows that:
\begin{cor}
Under the assumption of Proposition \ref{limit-kss-Q-fano}, up to a twisting there is a unique optimal degeneration of $M$ given by $\ell_\alpha$.
\end{cor}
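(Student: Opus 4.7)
The plan is to observe that the corollary follows almost immediately from \eqref{ker-L=eq}. First I note that since $M$ is K-semistable, Lemma \ref{sKss-crierion} gives $\mathcal{L}(u) \geq 0$ on $\mathcal{C}_{1,\mathcal{W}}$, hence $\mathcal{W}(u) = \mathcal{L}(u)/\|u\|_{L^2} \geq 0$. Because $\alpha \in \mathcal{V}_+$, the function $\ell_\alpha$ lies in $\mathcal{C}_{1,\mathcal{W}}$, and \eqref{ker-L=eq} gives $\mathcal{L}(\ell_\alpha) = 0$, so the infimum of $\mathcal{W}$ is attained and equals $0$.

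Next, I would characterize the full set of minimizers. Any $u \in \mathcal{C}_{1,\mathcal{W}}$ achieving $\mathcal{W}(u) = 0$ must satisfy $\mathcal{L}(u) = 0$, and \eqref{ker-L=eq} forces $u = \ell_\Lambda + c$ for some $\Lambda \in \mathcal{V}_+$ and $c \in \mathbb{R}$. The additive constant and positive rescaling of $u$ leave $\mathcal{W}$ invariant and alter the associated test configuration only trivially, so the optimal test configurations form the family $\{\mathcal{F}_\Lambda\}_{\Lambda \in \mathcal{V}_+}$ modulo those operations.

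Finally, to collapse this family to a single representative ``up to twisting,'' I would invoke exactly the classification already used at the end of the proof of Proposition \ref{limit-kss-Q-fano}: by \cite[Section 4.4]{Del-202009}, for $\Lambda \in \mathcal{V}_+$ with $\alpha(\Lambda) > 0$, $\mathcal{F}_\Lambda$ is a twist of $\mathcal{F}_{\Lambda'}$ with $\Lambda' = (\alpha(\Lambda)/|\alpha|^2)\alpha$, which up to rescaling coincides with $\mathcal{F}_\alpha$. In the boundary case $\alpha(\Lambda) = 0$, the vector $\Lambda$ lies in $\mathcal{V}_z$, so $\ell_\Lambda$ is a central affine function and, by the last statement of Proposition \ref{TC-classification}, induces only a product (hence trivial) degeneration. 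The only non-routine step is the twisting reduction, but this is already supplied inside the proof of Proposition \ref{limit-kss-Q-fano}, so I do not foresee any genuine obstacle.
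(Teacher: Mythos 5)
Your proposal is correct and follows essentially the same route as the paper, which treats the corollary as an immediate consequence of \eqref{ker-L=eq} together with the twisting reduction $\mathcal F_\Lambda \rightsquigarrow \mathcal F_\alpha$ already carried out at the end of the proof of Proposition \ref{limit-kss-Q-fano}; you have merely written out the details the paper leaves implicit, including the correct observation that the boundary case $\alpha(\Lambda)=0$ yields only product degenerations.
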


Theorem \ref{Main-thm2} is a combination of Propositions \ref{ss-limit-KRF} and \ref{limit-kss-Q-fano}. Theorem \ref{Main-thm3} follows from \ref{non-existence-sKss-Q-fano}  and \ref{limit-kss-Q-fano} as well as the well-known toric case.

\begin{rem}\label{recover}
The relation \eqref{bar-ss-cond} recovers the semistable criterion of \cite[Theorem 5.3]{Del3} under the assumption of $G/H$ and $M$ of Proposition \ref{limit-kss-Q-fano}.
\end{rem}

\begin{rem}\label{mod}
Let $G/H$ be a spherical homogenous space of rank $2$. Consider the modified K-semistable $\mathbb Q$-Fano compactification $\mathcal X_0$ of a $G/H$. The modified K-stability is defined analogously as Definition \ref{stability-def} according to the modified Futaki invariant (cf. \cite[Section 1]{TZ5}, \cite[Section 4]{BN} and \cite[Sections 1-2]{WZZ-Adv-2016}), and is related to the existence of K\"ahler-Ricci solitons. On a $G$-spherical variety $\mathcal X_0$ with moment polytope $P_+$, the modified Futaki invariant can be reduced to
\begin{align*}
\mathcal L_V(u):=\int_{P_+}\langle\nabla u,y-2\rho\rangle e^{\theta_V(y)}\pi \,dy\geq0,~\forall u\in\mathcal C_{1,\mathcal W}.
\end{align*}
Here $V$ denotes the soliton vector field, which is also a vector in $\mathcal V_z(\mathcal X_0)$, the linear part of the valuation cone of $\mathcal X_0$. If $\mathcal X_0$ is indeed modified K-stable, then it admits a K\"ahler-Ricci soliton. Otherwise, $\mathcal X_0$ is strictly modified K-semistable. In this case, applying Proposition \ref{limit-kss-Q-fano} to $\mathcal L_V(\cdot)$ we see that the ``polystable degeneration" of $\mathcal X_0$ (cf. \cite{Han-Li}) is also unique. In particular in cases of group compactifications, combining with \cite[Theorem 1.3]{LL-arXiv-2021}, one recovers the algebraic uniqueness of the limit of K\"ahler-Ricci flow showed by \cite{Han-Li} in this special case.
\end{rem}

\vskip 20pt


\section{Appendix: The moment polytope}

\begin{prop}\label{moment-polytope-convex}
Let $(M,L)$ be a polarized compactification of the spherical homogenous space $G/H$ and $P_+$ be its moment polytope. Denote by $\mathcal W$ the little Weyl group of $G/H$. Assume that
\begin{itemize}
\item[($\Pi$1)] $L$ has a $G$-invariant divisor, and:
\item[($\Pi$2)] $\Phi^G$ has no simple root that also belongs to the spherical root system $\Phi$;\footnote{When $\alpha\in\Phi_{+,s}^G\cap\Phi$, $\alpha$ is said to be Type-(a) according to the classification of Luna (cf. \cite[Section 30.10]{Timashev-book}).}
\item[($\Pi$3)] Whenever a simple root $\alpha\in\Phi^G_+$ satisfies $\alpha^\vee|_\mathfrak M\not\in\mathbb R\Phi$,
\begin{align*}
\alpha^\vee|_\mathfrak M\in\mathcal V\cup \mathcal V_+^\vee,
\end{align*}
where $\mathcal V_+^\vee$ is the (closed) dual cone of $\mathcal V_+$.
\end{itemize}
Then $P:=\displaystyle\bigcup_{w\in \mathcal W}w(P_+)$ is a convex polytope in $\mathfrak M_{\mathbb R}$.
\end{prop}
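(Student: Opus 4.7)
The plan is to appeal to the Brion--Luna--Vust description of the moment polytope. By $(\Pi 1)$, I can choose a $B$-stable effective Cartier divisor $\mathcal{D}=\sum_Y a_Y Y+\sum_{D\in\Delta_L}a_D D$ representing $L$ with at least one $G$-stable prime divisor in its support. Brion's theorem then expresses
\[P_+=\{\lambda\in\mathcal{V}_+^*\cap\mathfrak{M}_{\mathbb{R}}\,:\,\langle\lambda,\nu_Y\rangle+a_Y\geq 0\text{ for each invariant }Y,\ \langle\lambda,\varrho(D)\rangle+a_D\geq 0\text{ for each color }D\},\]
where $\nu_Y\in\mathcal{V}$ by definition of $\varrho$, while $\varrho(D)$ can lie outside $\mathcal{V}$. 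The invariant component guaranteed by $(\Pi 1)$ also ensures that $P_+$ is bounded along every ray of $\mathcal{V}_+$, so its $\mathcal{W}$-orbit cannot escape to infinity.

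I would then separate the facets of $P_+$ into Weyl-wall facets (lying on $\partial \mathcal{V}_+^*$) and outer facets (meeting the relative interior of $\mathcal{V}_+^*$). The union $P$ is convex precisely when each Weyl-wall facet becomes an interior face after gluing $P_+$ to its reflection $s_\alpha(P_+)$, and the outer facets extend to a consistent finite $\mathcal{W}$-invariant family of facets of $P$. Condition $(\Pi 2)$ excludes the Luna Type-(a) situation in which a simple $G$-root would also be a spherical root, and consequently guarantees that no color produces an extra facet lying on a Weyl wall $W_\alpha$ beyond the chamber wall itself, so that gluing $P_+$ to $s_\alpha(P_+)$ across $W_\alpha$ is clean.

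For the outer facets I would split the normals into the divisorial type $\nu_Y$ and the colored type $\varrho(D)=c\,\alpha^\vee|_{\mathfrak{M}}$ for some simple $G$-root $\alpha$ and $c>0$. Since $\nu_Y\in\mathcal{V}$, its $\mathcal{W}$-orbit automatically yields a $\mathcal{W}$-invariant convex family of half-spaces. For colors, $(\Pi 3)$ gives three sub-cases depending on where $\alpha^\vee|_{\mathfrak{M}}$ lies: if $\alpha^\vee|_{\mathfrak{M}}\in\mathbb{R}\Phi$ then the $\mathcal{W}$-action permutes the corresponding half-spaces; if $\alpha^\vee|_{\mathfrak{M}}\in\mathcal{V}$ the divisorial argument applies verbatim; and if $\alpha^\vee|_{\mathfrak{M}}\in\mathcal{V}_+^\vee$, then $\langle\lambda,\varrho(D)\rangle\geq 0$ already holds on the whole chamber $\mathcal{V}_+^*\cap\mathfrak{M}_{\mathbb{R}}$ by the defining dual pairing, so combined with $a_D\geq 0$ the corresponding inequality is vacuous on $P_+$ and contributes no new facet to $P$.

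Assembling these observations, $P$ is cut out by a finite $\mathcal{W}$-invariant collection of half-spaces and is therefore a convex $\mathcal{W}$-invariant polytope in $\mathfrak{M}_{\mathbb{R}}$; verifying that it meets $\mathcal{V}_+^*$ precisely in $P_+$ then identifies it with $\bigcup_{w\in\mathcal{W}}w(P_+)$. The main obstacle, I expect, will be the careful bookkeeping of colors in the $\mathcal{V}_+^\vee$-subcase: one needs $a_D\geq 0$ to conclude redundancy of the corresponding color inequality on $P_+$, and this must be extracted from effectiveness of $\mathcal{D}$ and the combinatorics of the associated colored cone, leveraging $(\Pi 1)$ in an essential way.
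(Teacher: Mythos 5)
Your overall strategy (Brion's facet description of $P_+$, Luna's classification of colours via ($\Pi$2), and the three-way case split on $\alpha^\vee|_{\mathfrak M}$ governed by ($\Pi$3)) is the same as the paper's, but there is a genuine gap at the very first step: you misread ($\Pi$1). The hypothesis is that $L$ admits a \emph{$G$-invariant} divisor, i.e.\ a representative $\mathfrak d=\sum_{Y\in\mathfrak D_G}c_Y Y$ supported entirely on $G$-invariant prime divisors, with \emph{no} colour components --- not merely a $B$-stable divisor with ``at least one $G$-stable prime divisor in its support.'' The whole point of ($\Pi$1) is that with such a representative the colour inequalities in Brion's description become homogeneous, $\varrho(D)(y)\geq 0$, with zero constant term. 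Your version keeps constants $a_D$ on the colour inequalities, and this breaks the two places where homogeneity is essential: (i) for colours with $\varrho(D)=c\,\alpha^\vee|_{\mathfrak M}$ and $\alpha^\vee|_{\mathfrak M}\in\mathbb R\Phi$, the inequality $\varrho(D)(y)\geq 0$ is exactly a Weyl-wall constraint of $\mathcal W$, so the facet it cuts is the chamber wall itself and the reflected copies glue cleanly; with $a_D\neq 0$ the corresponding hyperplane is offset from the wall and the union $\bigcup_w w(P_+)$ need not be convex (or the facet is simply not a wall facet and your ``$\mathcal W$ permutes the half-spaces'' claim has no content, since a single colour gives only one half-space, not a $\mathcal W$-orbit of them); (ii) in the $\mathcal V_+^\vee$ subcase, $\langle\varrho(D),y\rangle\geq 0$ is automatic on $\mathcal V_+$ only because the constant term vanishes --- you notice you would need $a_D\geq 0$ and defer this to ``bookkeeping,'' but the correct resolution is that ($\Pi$1) forces $a_D=0$ outright, which you never establish.

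Two smaller points: the boundedness of $P_+$ has nothing to do with ($\Pi$1) (it is automatic from ampleness of $L$), and the role of ($\Pi$2) is not to prevent ``extra facets on Weyl walls'' per se but to put every colour into Luna's type (2a) or (b), so that \cite[Lemma 30.20]{Timashev-book} gives the explicit formula $\varrho(D)=\tfrac12\alpha^\vee|_{\mathfrak M}$ or $\alpha^\vee|_{\mathfrak M}$ on which the entire case analysis rests; without that formula the trichotomy in ($\Pi$3) cannot even be applied to $\varrho(D)$. With the correct reading of ($\Pi$1) your outline becomes essentially the paper's proof.
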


\begin{proof}
Denote by $\mathfrak D_{G}$ the set of $G$-invariant prime divisors and $\mathcal D$ the colours of $G/H$ (i.e. the prime $B$-invariant divisors in $G/H$). The map $\varrho(\cdot)$ given by \eqref{varrho} extends to the set $\mathcal D$. Suppose that a $G$-invariant divisor of $L$ is given by
$$\mathfrak d=\sum_{Y\in\mathfrak D_{G}}c_YY.$$
Then according to \cite[Theorem 3.3]{Brion89} (see also \cite[Theorem 3.30]{Timashev-survey}), up to shifting by a vector in $\mathfrak{z^*(g)}\subset\mathcal V^*_z$, the moment polytope $P_+$ consists of all points $y\in\mathfrak M_\mathbb R$ satisfying
\begin{align}
c_Y+u_Y(y)&\geq0,~u_Y\in\mathfrak D_G,\label{polytope-eq-1}\\
\varrho(D)(y)&\geq0,~D\in\mathcal D.\label{polytope-eq-2}
\end{align}
Here each $u_Y\in\mathfrak N\cap\mathcal V$ denotes the prime generator of the ray corresponding to the $G$-invariant prime divisor $Y$. It suffices to show that the polytope defined by \eqref{polytope-eq-1}-\eqref{polytope-eq-2} extends to a $\mathcal W$-invariant one.

Now we recall a classification result of spherical system of Luna (cf. \cite[Section 30]{Timashev-book}). Denote by $\alpha^\vee$ the coroot of a simple root $\alpha\in \Phi^G_+$. For each $D\in\mathcal D$, we can associate to it a simple root $\alpha\in\Phi^G_+$ so that the corresponding minimal parabolic group $P_\alpha\subset G$ moves $D$. In this case we write $D\in\mathcal D(\alpha)$. Then by the assumption ($\Pi$2), there is no $\alpha\in\Phi^G_+$ that lies in the spherical root system $\Phi$ of $G/H$. By \cite[Lemma 30.20]{Timashev-book}, for any $D\in\mathcal D$, it holds\footnote{When \eqref{type-2a} holds, $\alpha$ is said to be Type-(2a).}
\begin{align}\label{type-2a}
\varrho(D)=\frac12\alpha^\vee|_{\mathfrak M},~\text{ if $D\in\mathcal D(\alpha)$ and }2\alpha\in\Phi,
\end{align}
or\footnote{When \eqref{type-b} holds, $\alpha$ is said to be Type-(b).}
\begin{align}\label{type-b}
\varrho(D)=\alpha^\vee|_{\mathfrak M},~\text{ if $D\in\mathcal D(\alpha)$ and }\alpha,2\alpha\not\in\Phi.
\end{align}
It is known that although a colour $D$ may belongs to different $\mathcal D(\alpha)$'s, the quantity $\alpha^\vee|_{\mathfrak M}$ is well-defined. Furthermore, $(\alpha^\vee|_\mathfrak M)^\vee\in\Phi\cup2\Phi$ if $\alpha^\vee|_\mathfrak M\in\mathbb R\Phi$, and any simple root   in $\Phi$  can be derived in this way. We also note that each $u_Y\in\mathcal V=-\mathcal V_+$, which is the inner normal vector of a facet of $P_+$. Hence the set $\hat P_+$ defined by \eqref{polytope-eq-1} and
\begin{align}
\varrho(D)(y)&\geq0,~D\in\mathcal D(\alpha)~\text{such that}~\alpha^\vee|_\mathfrak M\in\mathbb R\Phi\label{polytope-eq-2-1}
\end{align}
lies in the dominate Weyl chamber $\mathcal V_+$ and extends to a $\mathcal W$-invariant convex set $\hat P:=\displaystyle\bigcup_{w\in\mathcal W}w(\hat P_+)$.

Now we consider the remaining simple roots $\alpha\in\Phi^G_+$ satisfying $\alpha^\vee|_\mathfrak M\not\in\mathbb R\Phi$. By ($\Pi$3), if $\alpha^\vee|_\mathfrak M\in\mathcal V_+^\vee$, then
$$\hat P_+\cap\{\varrho(D)(y)\geq0\}=\hat P_+,~D\in\mathcal D(\alpha).$$
If $\alpha^\vee|_\mathfrak M\in\mathcal V$, then $v_D(y):=\max_{w\in\mathcal W}\{-\varrho(D)(w\cdot y)\}$ is a $\mathcal W$-dominate convex function and the polytope
$$\hat P_+\cap\{\varrho(D)(y)\geq0\},~D\in\mathcal D(\alpha)$$
extends to a convex polytope
$$\hat P\cap\{y|v_D(y)\leq0\}.$$

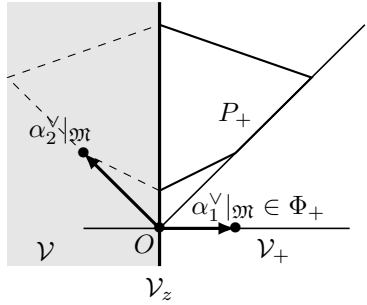
\begin{figure}[h]
\begin{tikzpicture}
\fill[color=gray!20] (-2,-0.5) -- (0,-0.5) -- (0, 3) -- (-2, 3);
\draw [very thick, -latex] (0,0) -- (1,0);
\draw [very thick, -latex] (0,0) -- (-1,1);
\draw (-1,1) node{$\bullet$};
\draw (1,0) node{$\bullet$};
\draw (0,0) node{$\bullet$};
\draw (-0.2,-0.2) node {$O$};
\draw (1.3,0.28) node {$\alpha_1^\vee|_\mathfrak M\in\Phi_+$};
\draw (-1.3,1.3) node {$\alpha_2^\vee|_\mathfrak M$};
\draw (1,1.5) node {$P_{+}$};
\draw (1.5,-0.3) node {$\mathcal V_+$};
\draw (-1.5,-0.3) node {$\mathcal V$};
\draw (0,-0.8) node {$\mathcal V_z$};
\draw[semithick]  (-1,0) -- (2.5,0);
\draw[very thick]  (0,-0.5) -- (0,3);
\draw[thick] (0,0.5) -- (1,1) -- (2,2) -- (0,2.7) -- (0,0);
\draw[dashed] (0,0.5) -- (-1,1) -- (-2,2) -- (0,2.7) -- (0,0);
\draw[semithick] (0,0)-- (2.7,2.7);

\end{tikzpicture}
\caption{\emph{In this example, there is only one simple root $\alpha_1^\vee|_\mathfrak M\in\Phi$, where $\alpha_1$ is a simple root of $\Phi^G_+$. The other simple root $\alpha_2\in\Phi^G_+$ satisfies $\alpha_2^\vee|_\mathfrak M\not\in\Phi$ but $\alpha_2^\vee|_\mathfrak M\in\mathcal V$}.}
\end{figure}

In whichever case, from \eqref{polytope-eq-2} we conclude that $P_+$ extends to a $\mathcal W$-invariant convex polytope.
\end{proof}

\begin{cor}\label{polytope-cor}
Suppose that the spherical homogenous space $G/H$ satisfies the assumptions $(\Pi2)$, $(\Pi3)$ in Proposition \ref{moment-polytope-convex} and
\begin{itemize}
\item[($\Pi$1')] For any $D\in\mathcal D$, there exists a $B$-semi-invariant function $f_D\in\mathbb C[G/H]$ so that $D=\{x\in G/H|f_D(x)=0\}.$
\end{itemize}
Then for any polarized compactification $(M,L)$ of $G/H$, its moment polytope $P_+$ extends to a convex polytope $P:=\displaystyle\bigcup_{w\in \mathcal W}w(P_+)\subset\mathfrak M_{\mathbb R}$. 
\end{cor}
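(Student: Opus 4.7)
The plan is to reduce Corollary \ref{polytope-cor} to Proposition \ref{moment-polytope-convex} by showing that $(\Pi 1')$, combined with ampleness of $L$, implies that some positive power $L^N$ admits a $G$-invariant divisor, so that hypothesis $(\Pi 1)$ holds for $(M,L^N)$. Since the moment polytope of $L^N$ is precisely $N\, P_+$, the property of extending to a $\mathcal{W}$-invariant convex polytope in $\mathfrak{M}_{\mathbb{R}}$ is equivalent for $P_+$ and $N\, P_+$, so no information is lost by passing to a power.

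First, since $L$ is ample, $H^0(M, L^k)\neq 0$ for some $k\geq 1$, and as a rational $G$-module it contains a nonzero $B$-eigenvector $s$. Its divisor $\mathfrak{d}:=\mathrm{div}(s)$ is effective and $B$-invariant. Since the $B$-stable prime divisors on the normal $G$-variety $M$ are exactly the $G$-invariant prime divisors together with the closures of the colours, we may write
\begin{align*}
\mathfrak{d} \;=\; \sum_{Y\in\mathfrak{D}_G} c_Y Y \;+\; \sum_{D\in\mathcal{D}} n_D D,
\qquad c_Y, n_D \in \mathbb{Z}_{\geq 0}.
\end{align*}

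Next, for each colour $D$ with $n_D>0$, assumption $(\Pi 1')$ provides a $B$-semi-invariant $f_D\in\mathbb{C}[G/H]\subset\mathbb{C}(M)$ whose zero set in $G/H$ is exactly $D$. By normality of $G/H$, the function $f_D$ has a well-defined order of vanishing $k_D\geq 1$ along $D$, so $\mathrm{div}_{G/H}(f_D)=k_D D$. Viewing $f_D$ as a rational function on $M$ and observing that any additional components of its divisor must be supported on $M\setminus(G/H)=\bigcup_Y Y$, we obtain
\begin{align*}
\mathrm{div}_M(f_D) \;=\; k_D D + \sum_{Y\in\mathfrak{D}_G} a_{D,Y} Y
\end{align*}
for some integers $a_{D,Y}$. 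The $\mathbb{Q}$-divisor
\begin{align*}
\mathfrak{d}' \;:=\; \mathfrak{d} \;-\; \sum_{D\in\mathcal{D}} \frac{n_D}{k_D}\,\mathrm{div}_M(f_D)
\end{align*}
is linearly equivalent to $\mathfrak{d}$ and has support contained in $\mathfrak{D}_G$, hence is $G$-invariant. Setting $N:=k\cdot\mathrm{lcm}\{k_D : n_D>0\}$, the integer divisor $\tfrac{N}{k}\mathfrak{d}'$ is a genuine $G$-invariant divisor representing $L^N$.

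Finally, $(M, L^N)$ now satisfies $(\Pi 1)$, while $(\Pi 2)$ and $(\Pi 3)$ are unchanged, so Proposition \ref{moment-polytope-convex} applies and the moment polytope $N\, P_+$ of $L^N$ extends to a $\mathcal{W}$-invariant convex polytope in $\mathfrak{M}_{\mathbb{R}}$; rescaling by $1/N$ yields the desired extension of $P_+$. The only substantive point to check is that $\mathfrak{d}'$ is genuinely $G$-invariant as a Weil divisor: this is automatic, since its support consists of $G$-stable prime divisors and the multiplicity along each such divisor is $G$-fixed. No serious obstacle arises; the argument is essentially a bookkeeping reduction of $(\Pi 1')$ to $(\Pi 1)$.
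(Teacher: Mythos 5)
Your proposal is correct and follows essentially the same route as the paper: represent $L$ (or a power) by a $B$-invariant divisor, use $(\Pi 1')$ to subtract the principal divisors $\mathrm{div}(f_D)$ and cancel the colour components, leaving a $G$-invariant divisor so that Proposition \ref{moment-polytope-convex} applies. The only differences are bookkeeping — you build the $B$-invariant divisor from a $B$-eigensection of $L^k$ rather than citing Brion's Proposition 3.1, and you track the vanishing orders $k_D$ and clear denominators by passing to $L^N$, whereas the paper works directly with $\mathbb{Q}$-divisors.
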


\begin{proof}

By \cite[Proposition 3.1]{Brion89}, there always exists a $B$-invariant $\mathbb Q$-Cartier divisor $\mathfrak d$ of $L$ so that
$$\mathfrak d=\sum_{Y\in\mathfrak D_{G}}c_YY+\sum_{D\in\mathcal D}c_D\overline D.$$
Here $c_y,c_D\in\mathbb Q$ are constants. By assumption ($\Pi$1'), for $f=\prod_{D\in\mathcal D}f_D^{c_D}$ we have
$$\text{div}(f)=\sum_{D\in\mathcal D}c_D D$$
on $G/H$. Hence on $M$,
$$\mathfrak d':=\mathfrak d-\text{div}(f)=\sum_{Y\in\mathfrak D_{G}}c'_YY$$
for some constants $c'_Y\in\mathbb Q$ is a $G$-invariant divisor and we confirm the assumption ($\Pi2$) in Proposition \ref{moment-polytope-convex}. Hence $P_+$ extends to a $\mathcal W$-invariant convex polytope.
\end{proof}

\begin{exa}
By Example \ref{grp-exa}, for the spherical homogenous space $\mathcal{G\times G}/{\rm diag}(G)$, $\Phi_+^\mathcal{G\times G}$ has only simple roots satisfying \eqref{type-b}. Hence $(\Pi2)$ holds. It is also easy to check that there is no simple root $\alpha\in\Phi_+^{\mathcal G\times \mathcal G}$ so that $\alpha^\vee|_\mathfrak M\not\in\mathbb R\Phi$. Hence $(\Pi3)$ holds automatically. Recall that the function $f_D$ for each colour $D$ is constructed in \cite[Section 7]{Timashev-Sbo}, this confirms $(\Pi1')$. Hence the moment polytope of a polarized group compactification extends to a $\mathcal W$-invariant convex polytope.

By a similar argument, we see that Corollary \ref{polytope-cor} also applies to the stable reductive varieties introduced in \cite{AB2,AK}. In particular the central fibre of an equivariant degeneration of a group compactification.
\end{exa}

\begin{exa}
The \emph{space of complete conics} constructed by Brion (cf. \cite[Example 17.12]{Timashev-book}) is a $SL_3(\mathbb C)$-spherical variety. It is a compactification of the spherical homogenous space $$SL_3(\mathbb C)/N_{SL_3(\mathbb C)}(SO_3(\mathbb C)).$$ Denote by $\{\alpha_1,\alpha_2\}$ the two simple roots of $SL_3(\mathbb C)$. The spherical roots are $\{2\alpha_1,2\alpha_2\}$. Thus both $\alpha_1$ and $\alpha_2$ satisfy \eqref{type-2a}.
\end{exa}

\vskip 10pt


\end{document}